\newcommand{\CC}{\mathbb{C}}
\newcommand{\ZZ}{\mathbb{Z}}
\newcommand{\sS}{\mathbb{S}}
\newcommand{\NN}{\mathbb{N}}
\newcommand{\RR}{\mathbb{R}}
\newcommand{\calR}{\mathcal{R}}
\newcommand{\calC}{\mathcal{C}}
\newcommand{\calO}{\mathcal{O}}
\newcommand{\calA}{\mathcal{A}}
\newcommand{\calD}{\mathcal{D}}
\newcommand{\calE}{\mathcal{E}}
\newcommand{\frakB}{\mathfrak{B}}
\newcommand{\frakX}{\mathfrak{X}}
\newcommand{\frakA}{\mathfrak{A}}
\newcommand{\frakp}{\mathfrak{p}}
\newcommand{\frakf}{\mathfrak{f}}
\newcommand{\frakS}{\mathfrak{S}}
\newcommand{\frakU}{\mathfrak{U}}
\newcommand{\fraks}{\mathfrak{s}}
\newcommand{\frakr}{\mathfrak{r}}
\newcommand{\frakt}{\mathfrak{t}}
\newcommand{\fraku}{\mathfrak{u}}
\newcommand{\frakg}{\mathfrak{g}}
\newcommand{\scrD}{\mathscr{D}}
\newcommand{\vphi}{\varphi}
\newcommand{\vtheta}{\vartheta}
\newcommand{\Id}{\operatorname{Id}}
\newcommand{\sign}[1]{\operatorname{sign}({#1})}
\newcommand{\pl}[1]{\foreignlanguage{polish}{#1}}
\newcommand{\abs}[1]{\lvert {#1} \rvert}
\newcommand{\sprod}[2]{\langle {#1}, {#2} \rangle}
\newcommand{\tr}{\operatorname{tr}}
\newcommand{\sym}{\operatorname{sym}}
\newcommand{\GL}{\operatorname{GL}}
\newcommand{\SL}{\operatorname{SL}}
\newcommand{\discr}{\operatorname{discr}}
\newcommand{\Mat}{\operatorname{Mat}}
\newcommand{\sigmaEss}{\sigma_{\mathrm{ess}}}
\newcommand{\sigmaAC}{\sigma_{\mathrm{ac}}}
\newcommand{\sigmaS}{\sigma_{\mathrm{sing}}}
\newcommand{\cl}{\operatorname{cl}}
\newcommand{\ud}{{\: \rm d}}
\newcommand{\ue}{\textrm{e}}
\newcommand{\supp}{\operatornamewithlimits{supp}}
\newtheorem{theorem}{Theorem}[section]
\newtheorem{proposition}[theorem]{Proposition}
\newtheorem{lemma}[theorem]{Lemma}
\newtheorem{corollary}[theorem]{Corollary}
\newtheorem{claim}[theorem]{Claim}
\theoremstyle{plain}
\newcounter{thm}
\newtheorem{main_theorem}[thm]{Theorem}
\numberwithin{equation}{section}
\theoremstyle{definition}
\newtheorem{example}[theorem]{Example}
\newtheorem{remark}[theorem]{Remark}
\title[Orthogonal polynomials in the Jordan block case II]
{Orthogonal polynomials with periodically modulated recurrence coefficients in the Jordan block case II}
\author{Grzegorz \'{S}widerski}
\address{
	Grzegorz \'{S}widerski \\
	Mathematical Institute \\
	University of Wroc\l{}aw \\
	pl. Grunwaldzki 2 \\
	50-384 Wroc\l{}aw, Poland
}
\email{grzegorz.swiderski@math.uni.wroc.pl}
\author{Bartosz Trojan}
\address{
	\pl{
		Bartosz Trojan \\
		Institute of Mathematics \\
		Polish Academy of Sciences \\
        ul. \'{S}niadeckich 8 \\
        00-696 Warszawa, Poland}
}
\email{btrojan@impan.pl}
\subjclass[2020]{Primary 47B36; Secondary 42C05}
\keywords{Orthogonal polynomials, asymptotics, Tur\'an determinants, Christoffel--Darboux kernel}
\begin{document}
\selectlanguage{english}

\begin{abstract}
	We study Jacobi matrices with $N$-periodically modulated recurrence coefficients when the sequence of $N$-step
	transfer matrices is convergent to a non-trivial Jordan block. In particular, we describe asymptotic behavior
	of their generalized eigenvectors, we prove convergence of $N$-shifted Tur\'an determinants as well as of
	the Christoffel--Darboux kernel on the diagonal. Finally, by means of subordinacy theory, we identify their absolutely
	continuous spectrum as well as their essential spectrum. By quantifying the speed of convergence of transfer matrices
	we were able to cover a large class of Jacobi matrices. In particular, those related to generators of birth--death processes.
\end{abstract}

\maketitle

\section{Introduction} 
\label{sec:1}
Consider two sequences $a = (a_n : n \in \NN_0)$ and $b = (b_n : n \in \NN_0)$ such that $a_n > 0$ and $b_n \in \RR$
for all $n \geq 0$. Let $A$ be the closure in $\ell^2(\NN_0)$ of the operator acting by the matrix
\[
	\calA =
	\begin{pmatrix}
		b_0 & a_0 & 0   & 0      &\ldots \\
		a_0 & b_1 & a_1 & 0       & \ldots \\
		0   & a_1 & b_2 & a_2   & \ldots \\
		0   & 0   & a_2 & b_3   &  \\
		\vdots & \vdots & \vdots  &  & \ddots
	\end{pmatrix}
\]
on finitely supported sequences. The operator $A$ is called \emph{Jacobi matrix} and its \emph{Jacobi parameters} are
the sequences $a$ and $b$. Recall that $\ell^2(\NN_0)$ is the Hilbert space of square summable complex-valued sequences
with the scalar product
\[
	\sprod{x}{y}_{\ell^2(\NN_0)} = \sum_{n=0}^\infty x_n \overline{y_n}.
\]
Its standard orthonormal basis will be denoted by $(\delta_n : n \in \NN_0)$. Namely, $\delta_n$ is the sequence having $1$
on the $n$th position and $0$ elsewhere.

Let us observe that the operator $A$ is always symmetric. However, if $A$ is unbounded, that is at least one of the sequences $a$ and $b$ is unbounded, it does not have to be self-adjoint. If it is self-adjoint, then one can define a Borel probability measure
$\mu$ as
\[
	\mu(\cdot) = \langle E_A(\cdot) \delta_0, \delta_0 \rangle_{\ell^2}
\]
where $E_A$ is the spectral resolution of the identity of $A$. Then the sequence of polynomials $(p_n : n \in \NN_0)$ satisfying
\[
	\begin{gathered}
		p_0(x) = 1, \qquad p_1(x) = \frac{x-b_0}{a_0}, \\
		a_{n-1} p_{n-1}(x) + b_n p_n(x) + a_n p_{n+1}(x)
		= x p_n(x), \qquad n \geq 1.
	\end{gathered}
\]
is an orthonormal basis in $L^2(\RR, \mu)$, that is the Hilbert space of square integrable complex-valued functions with
the scalar product
\[
	\langle f, g \rangle_{L^2(\RR, \mu)} =
	\int_{\RR} f(x) \overline{g(x)} \mu({\rm d} x).
\]
Moreover, the operator $U : \ell^2(\NN_0) \to L^2(\RR, \mu)$ defined on the basis vectors by
\[
	U \delta_n = p_n
\]
is unitary and satisfies
\[
	(U A U^{-1} f)(x) = x f(x)
\]
for every $f \in L^2(\RR, \mu)$ such that $x f \in L^2(\RR, \mu)$, see \cite[Section 6]{Schmudgen2017} for more details. 
It follows that the spectral properties of $A$ are intimately related to the properties of $\mu$. For example, $\sigmaEss(A)$ 
is the set of accumulation points of $\supp(\mu)$. Furthermore, if
\[
	\mu = \mu_{\mathrm{ac}} + \mu_{\mathrm{sing}}
\]
is the Lebesgue decomposition of $\mu$ into the absolutely continuous and the singular parts with respect to the Lebesgue
measure, then $\sigmaAC(A) = \supp(\mu_{\mathrm{ac}})$ and $\sigmaS(A) = \supp(\mu_{\mathrm{sing}})$.

Jacobi matrices are thoroughly studied. In the bounded case,  let us only refer to the recent monograph \cite{Simon2010Book} and to the references therein. For unbounded case, see e.g.
\cite{SwiderskiTrojan2019, ChristoffelI, ChristoffelII, Discrete, HintonLewis1978, Naboko2019, JanasNaboko2002, Sahbani2008,
DombrowskiPedersen2002a} and the references therein.
In this article we consider \emph{unbounded} Jacobi matrices only. 

An interesting class of unbounded Jacobi matrices is related to the so-called \emph{birth--death processes} (see, e.g. 
\cite{Schoutens2000}), that is stationary Markov processes with the discrete state space $\NN_0$. According to \cite{Karlin1957}
generators of birth--death processes correspond to the Jacobi parameters
\[
	a_n = \sqrt{\lambda_n \mu_{n+1}}, \qquad
	b_n = -\lambda_n - \mu_n
\]
where positive sequences $(\lambda_n : n \in \NN_0)$ and $(\mu_n : n \in \NN_0)$ are called the \emph{birth and death rates}, 
respectively. The simplest case is when $\lambda_n = \mu_{n+1}$, which we call \emph{symmetric}. In particular, we can consider the following example.
\begin{example} \label{ex:2} 
Let $\kappa \in (1,2)$ and set
\[
	a_n = (n+1)^\kappa, \qquad
	b_n = -(n+1)^\kappa -n^\kappa.
\]
Then $\lambda_n = (n+1)^\kappa$, $\mu_n = n^\kappa$.
\end{example}

Another interesting class of unbounded Jacobi matrices has been recently studied in \cite{Yafaev2020a}.
\begin{example} \label{ex:1} 
For $\kappa \in (1, \infty)$ and $f,g>-1$ we set
\[
	a_n = (n+1)^{\kappa} \Big( 1 + \frac{f}{n+1} \Big), \qquad
	b_n = -2 (n+1)^{\kappa} \Big( 1 + \frac{g}{n+1} \Big).
\]
\end{example}
In particular, in \cite{Yafaev2020a}, spectral properties of $A$ has been described if 
$\kappa \in (\frac{3}{2}, \infty)$ and $\kappa + 2 g - 2 f \neq 0$. 

Let us observe that in both examples the Jacobi parameters satisfy
\begin{equation} \label{eq:127}
	\lim_{n \to \infty} a_n = \infty, \quad
	\lim_{n \to \infty} \frac{a_{n-1}}{a_n} = 1, \quad
	\lim_{n \to \infty} \frac{b_n}{a_n} = -2.
\end{equation}
The aim of this article is to study spectral properties of $A$ as well as the asymptotic behavior of the associated orthogonal 
polynomials $(p_n : n \geq 0)$ for a large subclass of Jacobi parameters satisfying \eqref{eq:127} containing sequences 
from Examples \ref{ex:2} and \ref{ex:1} as special cases.
In fact, in this article we will go beyond \eqref{eq:127} by allowing the sequences $(\frac{a_{n-1}}{a_n})$ and 
$(\frac{b_n}{a_n})$ to be asymptotically periodic. To be more precise, given $N$ a positive integer, we say that Jacobi 
parameters $(a_n : n \in \NN_0)$ and $(b_n : n \in \NN_0)$ are \emph{$N$-periodically modulated} if there are two $N$-periodic
sequences $(\alpha_n : n \in \ZZ)$ and $(\beta_n : n \in \ZZ)$ of positive and real numbers, respectively, such that
\begin{enumerate}[(a)]
	\item
	$\begin{aligned}[b]
	\lim_{n \to \infty} a_n = \infty
	\end{aligned},$
	\item
	$\begin{aligned}[b]
	\lim_{n \to \infty} \bigg| \frac{\alpha_{n-1}}{\alpha_n} - \frac{a_{n-1}}{a_n} \bigg| = 0
	\end{aligned},$
	\item
	$\begin{aligned}[b]
	\lim_{n \to \infty} \bigg| \frac{\beta_n}{\alpha_n} - \frac{b_n}{a_n} \bigg| = 0
	\end{aligned}.$
\end{enumerate}
It turns out that spectral properties of $N$-periodically modulated
Jacobi matrices depend on the matrix $\frakX_0(0)$ where for any $n \geq 0$ we have set
\[
	\frakX_n(x) = \frakB_{N+n-1}(x) \frakB_{N+n-2}(x) \cdots \frakB_n(x)
\]
where
\[
	\frakB_j(x) =
	\begin{pmatrix}
		0 & 1 \\
		-\frac{\alpha_{j-1}}{\alpha_j} & \frac{x - \beta_j}{\alpha_j}
	\end{pmatrix}.
\]
More specifically, we can distinguish four cases:
\begin{enumerate}[I.]
\item \label{perMod:I}
if $|\tr \frakX_0(0)|<2$, then under some regularity assumptions on Jacobi parameters one has that $\sigma(A) = \RR$,
and it is purely absolutely continuous, see e.g. 
\cite{JanasNaboko2002, PeriodicI, PeriodicII, SwiderskiTrojan2019, JanasNaboko2001}; 

\item \label{perMod:II} 
if $|\tr \frakX_0(0)|=2$, then we have two subcases: 
\begin{enumerate}[a)]
\item  \label{perMod:IIa}
if $\frakX_0(0)$ is diagonalizable then under some regularity assumptions on Jacobi parameters there is a compact interval 
$I \subset \RR$ such that $A$ is purely absolutely continuous on $\RR \setminus I$, and it is purely discrete in the interior of 
$I$, see e.g. \cite{Dombrowski2004, Dombrowski2009, DombrowskiJanasMoszynskiEtAl2004, DombrowskiPedersen2002a, 
DombrowskiPedersen2002, JanasMoszynski2003, JanasNabokoStolz2004, Sahbani2016, Janas2012, PeriodicII, PeriodicIII,
Discrete};

\item \label{perMod:IIb}
if $\frakX_0(0)$ is \emph{not} diagonalizable then the only situation which was known is the case when either the essential spectrum of $A$ is empty or it is a half-line, see e.g. \cite{Damanik2007, Janas2001, Janas2009, Naboko2009, Naboko2010, Simonov2007, DombrowskiPedersen1995,
Naboko2019, Yafaev2020a, Pchelintseva2008, Motyka2014, Motyka2015, jordan, Dombrowski1997};
\end{enumerate}

\item \label{perMod:III}
if $|\tr \frakX_0(0)|>2$, then under some regularity assumptions on Jacobi parameters the essential spectrum of $A$ is empty, see e.g. \cite{Discrete, JanasNaboko2002, HintonLewis1978, Szwarc2002, NabokoJanas2003}; 
\end{enumerate}

Observe that in case~I the absolutely continuous spectrum fills the whole real line, whereas in the case~III it is empty. This phenomenon was originally observed in \cite{JanasNaboko2002} and it was called \emph{spectral phase transition of the first type}. Notice that the case~II corresponds to the \emph{point} where the actual phase transition occurs. In fact, in \cite[Section 5]{JanasNaboko2002} the task of analyzing the case~II was formulated as a very interesting open problem, whose analysis required finding new tools. Nowadays, the case~II.a is quite well-understood, see \cite{ChristoffelII, Discrete}. Therefore, in this article we are exclusively interested in the case II.b, which for $N = 1$ and $\alpha_n \equiv 1, \beta_n \equiv -2$ covers \eqref{eq:127}. 

Let us introduce an
auxiliary positive sequence $\gamma = (\gamma_n : n \in \NN_0)$ tending to infinity. In Examples \ref{ex:2} and \ref{ex:1} we take
$\gamma_n = a_n$ and $\gamma_n = n+1$, respectively. We say that $N$-periodically modulated Jacobi parameters $(a_n), (b_n)$ 
are $\gamma$-\emph{tempered} if
\[
	\bigg( \sqrt{\gamma_n} \Big( \frac{\alpha_{n-1}}{\alpha_n} - \frac{a_{n-1}}{a_n} \Big) : n \in \NN \bigg),
	\bigg( \sqrt{\gamma_n} \Big( \frac{\beta_n}{\alpha_n} - \frac{b_n}{a_n} \Big) : n \in \NN\bigg),
	\bigg( \frac{\gamma_n}{a_n} : n \in \NN \bigg) \in \calD_1^N.
\]
Let us recall that a sequence $(x_n : n \in \NN)$ belongs to $\calD_1^N$ if
\[
	\sum_{n=1}^\infty |x_{n+N} - x_n| < \infty. 
\]
About the sequence $\gamma$ we assume that
\begin{equation}
	\label{eq:87}
	\bigg( \sqrt{\gamma_n} \Big( \sqrt{\frac{\alpha_{n-1}}{\alpha_n}} - \sqrt{\frac{\gamma_{n-1}}{\gamma_n}} \Big) 
	: n \in \NN \bigg), 
	\bigg(\frac{1}{\sqrt{\gamma_n}} : n \in \NN \bigg)
	\in \calD_1^N,
\end{equation}
and
\begin{equation}
	\label{eq:89}
	\lim_{n \to \infty} \big( \sqrt{\gamma_{n+N}} - \sqrt{\gamma_n} \big) = 0.
\end{equation}
Moreover, we impose that
\begin{equation}
	\label{eq:88}
		\bigg(\gamma_n \big(1 - \varepsilon \big[\frakX_n(0)\big]_{11}\big)
		\Big(\frac{\alpha_{n-1}}{\alpha_n} - \frac{a_{n-1}}{a_n} \Big)
		- \gamma_n
		\varepsilon \big[\frakX_n(0) \big]_{21}
		\Big(\frac{\beta_n}{\alpha_n} - \frac{b_n}{a_n}\Big)
		: n \in \NN
		\bigg) \in \calD_1^N,
\end{equation}
where $\varepsilon = \sign{\tr \frakX_0(0)}$. To formulate the main results of this paper, we need further definitions. 
For $x \in \CC$ and $n \in \NN_0$ we define the \emph{transfer matrix} by
\[
	B_n(x) =
	\begin{pmatrix}
		0 & 1 \\
		-\frac{a_{n-1}}{a_n} & \frac{x-b_n}{a_n}
	\end{pmatrix}.
\]
We use the convention that $a_{-1} := 1$. Moreover, for a matrix 
\[
	Y = 
	\begin{pmatrix}
	y_{11} & y_{12} \\
	y_{21} & y_{22}
	\end{pmatrix}
\]
we set $[Y]_{ij} = y_{ij}$. The discriminant of $Y$ is defined as $\discr Y = (\tr Y)^2 - 4 \det Y$.

The first main result of this article identifies the absolutely continuous and the essential spectrum of the studied class
of Jacobi matrices.
\begin{main_theorem}
	\label{thm:A}
	Let $N$ be a positive integer. Let $(\gamma_n)$ be a sequence of positive numbers tending to infinity 
	and satisfying \eqref{eq:87} and \eqref{eq:89}. Let $(a_n)$ and $(b_n)$ be $\gamma$-tempered $N$-periodically modulated 
	Jacobi parameters such that $\frakX_0(0)$ is a non-trivial parabolic element. Suppose that \eqref{eq:88} holds true with
	$\varepsilon = \sign{\tr \frakX_0(0)}$. Set
	\begin{equation} \label{eq:130}
		X_n(x) = B_{n+N-1}(x) B_{n+N-2}(x) \ldots B_{n+1}(x) B_n(x).
	\end{equation}
	Then the limit
	\begin{equation} \label{eq:134}
		\tau(x) = \frac{1}{4} \lim_{n \to \infty} \frac{\gamma_{n+N-1}}{\alpha_{n+N-1}} \discr X_n(x), 
		\qquad x \in \RR,
	\end{equation}
	exists and defines a polynomial of degree at most one. Let
	\[
		\Lambda_- = \tau^{-1} \big( (-\infty, 0) \big) \quad \text{and} \quad
		\Lambda_+ = \tau^{-1} \big( (0, \infty) \big).
	\]
	If $\Lambda_- \cup \Lambda_+ \neq \emptyset$ and $A$ is self-adjoint 
	then\footnote{By $\cl(X)$ we denote the closure of the set $X$.}
	\[
		\sigmaS(A) \cap \Lambda_- = \emptyset \quad \text{and} \quad
		\sigmaAC(A) = \sigmaEss(A) = \cl(\Lambda_-).
	\]
\end{main_theorem}

Let us emphasize that Theorem \ref{thm:A} excludes the case $\Lambda_- = \Lambda_+ = \emptyset$, that is $\tau \equiv 0$. 
Moreover, Theorem~\ref{thm:A} implies that the operator $A$ is \emph{not} semi-bounded if $\Lambda_+ = \emptyset$ and 
$\Lambda_- \neq \emptyset$, because $\Lambda_- = \RR = \sigma(A)$. However, it is unclear under what hypotheses
the operator $A$ is semi-bounded when $\Lambda_+ \neq \emptyset$. Recall that in the case III a characterization of
semi-boundedness of the operator $A$ was given in \cite{NabokoJanas2003}.

The condition \eqref{eq:88} might look rather restrictive. However, it is always satisfied by Jacobi parameters studied in 
\cite{jordan} as well as for generators of symmetric birth--death processes (cf. Remark~\ref{rem:4}). Hence,  
Theorem~\ref{thm:A} can be applied to Jacobi parameters described in Example~\ref{ex:2} where for $\gamma_n = a_n$ we
get $\tau(x) = x$. Moreover, if $N=1$ the condition \eqref{eq:88} reduces to
\[
	\bigg( \gamma_n \Big( 1 + \frac{a_{n-1}}{a_n} + \varepsilon \frac{b_n}{a_n} \Big) \bigg) \in \calD_1^1.
\]
Therefore, Theorem~\ref{thm:A} can be applied to Jacobi parameters given in Example~\ref{ex:1} where for $\gamma_n = n+1$ 
we obtain $\tau(x) \equiv -\kappa - 2 g + 2 f$.

The proof of Theorem~\ref{thm:A} uses the \emph{theory of subordinacy}. It was first developed in \cite{Gilbert1987} for 
one-dimensional Schr\"{o}dinger operators on the real half-line, and later adapted to other classes of operators, see e.g. 
the survey \cite{Gilbert2005} for more details. In particular, the extension to Jacobi matrices has been accomplished in 
\cite{Khan1992}. The theory of subordinacy links asymptotic behavior of generalized eigenvectors to spectral properties 
of Jacobi matrices. Let us recall that a sequence $(u_n : n \in \NN_0)$ is a \emph{generalized eigenvector} associated to
$x \in \CC$, and corresponding to $\eta \in \RR^2 \setminus \{0\}$, if the sequence of vectors
\begin{align*}
	\vec{u}_0 &= \eta, \\
	\vec{u}_n &= 
	\begin{pmatrix}
		u_{n-1} \\
		u_n
	\end{pmatrix}, \quad n \geq 1,
\end{align*}
satisfies
\begin{equation} \label{eq:131}
	\vec{u}_{n+1} = B_n(x) \vec{u}_n, \quad n \geq 0.
\end{equation}
We often write $(u_n(\eta, x) : n \in \NN_0)$ to indicate the dependence on
the parameters. In particular, the sequence of orthogonal polynomials $(p_n(x) : n \in \NN_0)$ is the generalized eigenvector
associated to $\eta = (0,1)^t$ and $x \in \CC$. Motivated by \cite[Section~8]{Simon2008} it will be convenient to define \emph{(generalized) Christoffel--Darboux kernel} by
\[
	K_n(x, y; \eta) = \sum_{j=0}^n u_j(\eta, x) u_j(\eta, y), \quad 
	x, y \in \RR,\ \eta \in \RR^2 \setminus \{0\}.
\]
Suppose that $A$ is self-adjoint. According to \cite[Theorem 3]{Khan1992}, if for some compact interval with non-empty interior 
$K \subset \RR$,
\begin{equation} \label{eq:129}
	\liminf_{n \to \infty} \frac{K_n(x,x;\eta)}{K_n(x,x;\tilde{\eta})} < \infty \quad
	\text{for any } x \in K \text{ and } \eta, \tilde{\eta} \in \sS^1 ,\footnote{By $\sS^1$ we denote the unit sphere in $\RR^2$.}
\end{equation}
then the measure $\mu$ is absolutely continuous on $K$, and $K \subset \supp(\mu)$. Consequently, $A$ is absolutely continuous on $K$, and $K \subset \sigmaAC(A)$. This theory became a standard approach to spectral analysis of Jacobi matrices. It has also been observed that by imposing some uniformity conditions to \eqref{eq:129} more detailed information on the density of $\mu$ can be obtained, see the references in \cite[Section 4]{Gilbert2005}. In the present article we shall show that for any compact interval $K \subset \Lambda_-$ the following stronger version of \eqref{eq:129} holds true
\begin{equation}
	\label{eq:84}
	\sup_{n \in \NN_0} 
	\sup_{x \in K} 
	\sup_{\eta, \tilde{\eta} \in \sS^1} 
	\frac{K_n(x, x; \eta)}{K_n(x, x; \tilde{\eta})} < \infty.
\end{equation}
In view of \cite{Clark1993} (see also \cite{Moszynski2022a} for a different proof in a more general setup) the condition~\eqref{eq:84} implies existence of positive
constants $c_1, c_2$ such that the density of $\mu$, $\mu'$, satisfies
\begin{equation} \label{eq:128}
	c_1 < \mu'(x) < c_2
\end{equation}
for almost all $x \in K$, with respect to the Lebesgue measure. Finally, in \cite{Silva2007}, the following consequence of subordinacy theory has been established: if $A$ is self-adjoint and for some $K \subset \RR$ there
is a function $\eta : K \to \RR^2 \setminus \{0\}$ such that
\begin{equation} \label{eq:85}
	\sum_{n=0}^\infty \sup_{x \in K} \big| u_n \big( \eta(x), x \big) \big|^2 < \infty,
\end{equation}
then $K \cap \sigmaEss(A) = \emptyset$. In Theorem~\ref{thm:3}, with a help of a recently obtained variant of discrete Levinson's
type theorem (see \cite{Discrete}), we show that \eqref{eq:85} holds for every compact interval $K \subset \Lambda_+$. 
The fact that \eqref{eq:84} holds for every compact interval $K \subset \Lambda_-$ is a consequence of the following theorem.
\begin{main_theorem}
	\label{thm:B}
	Let $N$ be a positive integer. Let $(\gamma_n)$ be a sequence of positive numbers tending to infinity
	and satisfying \eqref{eq:87} and \eqref{eq:89}. Let $(a_n)$ and $(b_n)$ be $\gamma$-tempered $N$-periodically modulated 
	Jacobi parameters such that $\frakX_0(0)$ is a non-trivial parabolic element.
	Suppose that \eqref{eq:88} holds true with $\varepsilon = \sign{\tr \frakX_0(0)}$. Set
	\[
		\rho_n = \sum_{j=0}^n \frac{\sqrt{\alpha_j \gamma_j}}{a_j}.
	\]
	If $\Lambda_- \neq \emptyset$, then $A$ is self-adjoint if and only if $\rho_n \to \infty$. If it is the case, then the limit
	\begin{equation}
		\label{eq:125}
		\lim_{n \to \infty} \frac{1}{\rho_n} K_n(x, x; \eta)
	\end{equation}
	exists locally uniformly with respect to $(x, \eta) \in \Lambda_- \times \sS^1$, and defines a~continuous positive function.
\end{main_theorem}

\begin{example} 
	\label{ex:4}
	Let $\kappa \in (1, \tfrac{3}{2}]$ and $f, g > -1$ be such that
	\[
		\kappa + 2 g - 2 f < 0.
	\]
	We set
	\[
		a_n = (n+1)^{\kappa} \bigg(1 + \frac{f}{n+1}\bigg), \qquad
		b_n = 2 (n+1)^\kappa \bigg(1 + \frac{g}{n+1}\bigg).
	\]
	Since $\kappa > 1$, the Carleman condition is not satisfied, that is
	\[
		\sum_{n=0}^\infty \frac{1}{a_n} < \infty.
	\]
	As it is easy to check, we can apply Theorems \ref{thm:A} and \ref{thm:B} to the above Jacobi parameters,
	which leads to the conclusion that the corresponding Jacobi operator $A$ is self-adjoint and $\sigmaAC(A) = \RR$.
\end{example}
Example~\ref{ex:4} is inspired by examples given by Kostyuchenko--Mirzoev in \cite{Kostyuchenko1999} who provided
Jacobi parameters giving rise to self-adjoint Jacobi operators violating the Carleman's condition.
Later the original Kostyuchenko--Mirzoev class was somewhat extended and it was proven that one usually has 
$\sigmaEss(A) = \emptyset$, see e.g. \cite[Section 2.2]{JanasMoszynski2003} and \cite[Section 6.2]{Discrete}. 
To the best of our knowledge Jacobi parameters described in Example~\ref{ex:4} provide the first instances of Jacobi
operators violating the Carleman's condition such that $\sigmaAC(A) = \RR$. In contrast, a construction of self-adjoint 
Jacobi matrices with $\sigmaAC(A) = [0, \infty)$ violating Carleman's condition is well-known, see e.g. 
\cite{Dombrowski1997}.

To prove Theorem~\ref{thm:B}, we first determine asymptotic behavior of generalized eigenvectors. Then we
apply a non-trivial averaging procedure to it. The asymptotic formula is given in the following theorem.
\begin{main_theorem}
	\label{thm:C}
	Let $N$ be a positive integer. Let $(\gamma_n)$ be a sequence of positive numbers tending to infinity
	and satisfying \eqref{eq:87} and \eqref{eq:89}. Let $(a_n)$ and $(b_n)$ be $\gamma$-tempered $N$-periodically modulated 
	Jacobi parameters such that $\frakX_0(0)$ is a non-trivial parabolic element. Suppose that \eqref{eq:88} holds true with 
	$\varepsilon = \sign{\tr \frakX_0(0)}$. If $\Lambda_- \neq \emptyset$, then for each $i \in \{0, 1, \ldots, N-1 \}$ and 
	every compact interval $K \subset \Lambda_-$, there are a~continuous function $\varphi_i : \sS^1 \times K \to \CC$ and 
	$j_0 \geq 1$ such that
	\[
		\lim_{j \to \infty} \sup_{(\eta,x) \in \sS^1 \times K}
		\bigg|
		\sqrt{\frac{a_{jN+i-1}}{\sqrt{\gamma_{jN+i-1}}}} u_{jN+i}(\eta, x) 
		- 
		|\varphi_i(\eta, x)| \sin \bigg( \sum_{k=j_0}^{j-1} \theta_{k;i}(x) + \arg \varphi_i(\eta,x) \bigg) \bigg| = 0
	\]
	where $\theta_{k;i} : K \to \RR$ are some explicit continuous functions. 
	Moreover, $\varphi_i(\eta,x) = 0$ for some (and then for all) $(\eta,x) \in \sS^1 \times K$ if and only if $[\frakX_i(0)]_{21} = 0$.
\end{main_theorem}

The proof of Theorem~\ref{thm:C} is based on uniform diagonalization of transfer matrices which has been already used in
\cite{jordan}. However, in the current setup we were not able to relate $|\varphi_i(\eta,x)|$ to the density of $\mu$. 
Hence, in order to prove that $\varphi_i(\eta, x) \neq 0$ provided $[\frakX_i(0)]_{21} \neq 0$, we needed an additional
argument based on a consequence of the following theorem (see Corollary~\ref{cor:4} for details) which studies convergence of
\emph{generalized $N$-shifted Tur\'{a}n determinants}. The latter are defined as
\begin{align*}
	\scrD_n(\eta, x) 
	&=
	\det
	\begin{pmatrix}
		u_{n+N-1}(\eta,x) & u_{n-1}(\eta,x) \\
		u_{n+N}(\eta,x) & u_n(\eta,x)
	\end{pmatrix} \\
	&=
	u_{n}(\eta,x) u_{n+N-1}(\eta,x) - u_{n-1}(\eta,x) u_{n+N}(\eta,x)
\end{align*}
where $(u_n(\eta,x) : n \in \NN_0)$ is the generalized eigenvector associated to $x \in \RR$, and corresponding to 
$\eta \in \RR^2 \setminus \{0\}$. 
The (classical) shifted Tur\'{a}n determinants correspond to $\eta = (0,1)^t$. They were defined for the first time in 
\cite{Turan1950} for $N=1$, and then generalized in \cite{GeronimoVanAssche1991} to $N \geq 1$. In \cite{Turan1950} they were 
instrumental in studying the zeros of the Legendre polynomials where it was observed that they are non-negative on the support 
of their orthogonality measure, see also \cite{Karlin1960} for later developments. As it was shown in 
\cite[Theorem 7.34]{Nevai1979} and \cite[Theorem 6]{GeronimoVanAssche1991}, if $\supp(\mu)$ is compact, the asymptotic behavior
of shifted Tur\'an determinants is usually closely related to the density of $\mu$, see \cite{Nevai1983, Nevai1987} and
the survey \cite{Nevai1992}. The extension of the above phenomena to measures with unbounded support has been accomplished in
\cite{PeriodicII, PeriodicIII, SwiderskiTrojan2019, jordan}. For these reasons the following theorem is an important result on its own.
\begin{main_theorem}
	\label{thm:D}
	Let $N$ be a positive integer. Let $(\gamma_n)$ be a sequence of positive numbers tending to infinity
	and satisfying \eqref{eq:87} and \eqref{eq:89}. Let $(a_n)$ and $(b_n)$ be $\gamma$-tempered $N$-periodically modulated 
	Jacobi parameters such that $\frakX_0(0)$ is a non-trivial parabolic element.
	Suppose that \eqref{eq:88} holds true with $\varepsilon = \sign{\tr \frakX_0(0)}$. If $\Lambda_- \neq \emptyset$, 
	then for each $i \in \{0, 1, \ldots, N-1 \}$ the limit
	\begin{equation} 
		\label{eq:126}
		\lim_{\substack{n \to \infty \\ n \equiv i \bmod N}} 
		a_{n+N+1} \sqrt{\gamma_{n+N-1}} \big| \scrD_n(\eta,x) \big|
	\end{equation}
	exists locally uniformly with respect to $(x, \eta) \in \Lambda_- \times \sS^1$ and defines a~continuous positive function.
\end{main_theorem}

Let us remark that the first order asymptotics of generalized eigenvectors provided by Theorem~\ref{thm:C} is \emph{insufficient} to 
prove~\eqref{eq:126}. It is an open problem whether, similarly to \cite{SwiderskiTrojan2019, ChristoffelI, ChristoffelII, jordan}, one can relate the value of \eqref{eq:126} to the density of the measure $\mu$. We hope to return to this problem in the future.

In this article, we also consider $\ell^1$-type perturbations of Jacobi parameters $a$, $b$ satisfying hypotheses of
Theorem~\ref{thm:A}. Namely, in Section~\ref{sec:10}, we study Jacobi parameters $\tilde{a}, \tilde{b}$ of the form
\[
	\tilde{a}_n = a_n (1 + \xi_n), \qquad
	\tilde{b}_n = b_n (1+\zeta_n),
\]
where $(\sqrt{\gamma_n} \xi_n), (\sqrt{\gamma_n} \zeta_n) \in \ell^1$. We show that for sequences $\tilde{a}$ and $\tilde{b}$
the analogues of Theorems \ref{thm:A}--\ref{thm:C} hold true. In particular, we can treat the following Jacobi parameters 
\begin{example}
	\label{ex:3} 
	For $\kappa \in (1, \infty)$ and $f,g \in \RR$ we set
	\[
		\tilde{a}_n = (n+1)^{\kappa} \Big( 1 + \frac{f}{n+1} + \xi_n \Big), \qquad
		\tilde{b}_n = 2 (n+1)^{\kappa} \Big( 1 + \frac{g}{n+1} + \zeta_n \Big),
	\]
	where $(\sqrt{n} \xi_n), (\sqrt{n} \zeta_n) \in \ell^1$ and $\kappa + 2 g - 2f \neq 0$.
\end{example}
Jacobi parameters considered in Example~\ref{ex:3} under the additional restrictions $\kappa \in (\tfrac{3}{2}, \infty)$ and 
$\xi_n, \zeta_n = \calO(n^{-2})$, have recently been studied in \cite{Yafaev2020a}.

Before we close the introduction, let us mention some of the approaches used in the literature for analysis of the case II.b. In \cite{Dombrowski1997} it was observed that a certain class of Jacobi matrices related to birth--death processes can be studied by considering the restriction to a subspace of $\ell^2$ of the square of Jacobi matrices belonging to the case I with 
$b_n \equiv 0$. This method is particularly effective in describing $\sigmaAC(A)$. Next, in \cite{Naboko2009}, asymptotics of generalized eigenvectors was studied by the reduction to the analysis of a discrete variant of Ricatti equation, whereas in \cite{Pchelintseva2008, Simonov2007} the analysis was possible by applying Birkhoff--Adams theorem. Further,
in \cite{Naboko2019} by adaptation of Kooman method (see \cite{Kooman2007}) and the approach of \cite{AptekarevGeronimo2016} it was possible to obtain asymptotic behavior of generalized eigenvectors for $x \in \CC \setminus \{0\}$ as well as continuity of the  density of the measure $\mu$. A very important class of methods is motivated by the technique introduced by Harris and Lutz
in \cite{Harris1975}. In these methods for a given $i \in \{0,1, \ldots, N-1\}$ one consider the "change of variables"
\begin{equation} \label{eq:133}
	\vec{u}_{nN+i} = Z_n \vec{v}_n, \quad n \geq 0
\end{equation}
for some invertible matrices $Z_n$. Then by \eqref{eq:131} and \eqref{eq:130} the sequence $(\vec{v}_n)$ satisfies the equation
\begin{equation} \label{eq:132}
	\vec{v}_{n+1} = Z_{n+1}^{-1} X_{nN+i} Z_n \vec{v}_n, \quad n \geq 0.
\end{equation}
The matrices $Z_n$ are chosen in a way that one can apply to the system~\eqref{eq:132} Levinson's theorem. Then thanks to the
relation \eqref{eq:133}, the asymptotics of $(\vec{v}_n)$ easily leads to the asymptotics of $(\vec{u}_{nN+i} : n \geq n_0)$.
The success of this approach depends on the properties of the matrices $Z_n$. In \cite{Janas2001} the construction of these 
matrices were motivated by a formal WKB method in which, by means of an \emph{ansatz}, one guesses the form of the solution. 
This approach was later extended in \cite{Damanik2007, Janas2009, Motyka2014, Naboko2010}. It should be emphasized that the 
resulting matrices $Z_n$ were complex-valued, oscillating and unbounded.

In this work, we start by extending techniques which were successful in the prequel~\cite{jordan}. Namely, we construct matrices
$Z_n$ such that the system \eqref{eq:132} satisfies hypotheses of a uniform discrete Levinson's theorem so it belongs to Harris--Lutz paradigm. However, our matrices are
very simple and explicit (see \eqref{eq:3}), real and convergent (obviously to a singular matrix). These features lead to 
greater applicability of our approach than in the previous works. Since Jacobi parameters considered in this paper are more 
"singular" than in \cite{jordan}, we were forced to use a more general and delicate change of variables, so that we can exploit 
the condition \eqref{eq:88} to "smooth them out". Using our change of variables, the spectral properties of $A$ on $\Lambda_+$
can be derived analogously to \cite{jordan}. On $\Lambda_-$ the situation is much more involved. Namely, in \cite{jordan}, 
in order to prove that $\mu$ is absolutely continuous on every compact $K \subset \Lambda_-$, we used an explicit sequence
of probability measures $(\mu_n : n \in \NN)$ which converges weakly to $\mu$, and such that the sequence of their densities
converges uniformly on $K$ to a~continuous positive function. In the present paper this approach does not work anymore. 
To get around of this issue we apply the subordinacy theory. This requires to analyze the asymptotic behavior of Christoffel--Darboux
kernel which was possible thanks to the asymptotics obtained in Theorem~\ref{thm:C}. All of this reduces the problem to study
averages of highly oscillatory sums. For this reason we develop Lemma~\ref{lem:1}, which might be of independent interest.

The method of asymptotic analysis of generalized eigenvectors is similar to \cite{jordan}. However, in the present situation we 
had to find another argument showing positivity of the function $|\varphi_i|$. Previously, by using the convergence of densities
of the sequence $(\mu_n : n \in \NN)$, we were able to explicitly compute the value of $|\varphi_i|$ in terms of $\mu'$. In the present work we
use  certain algebraic properties of $\varphi_i$ together with Theorem~\ref{thm:D}, see Claim~\ref{clm:4} for details. 
Let us emphasize that the method of subordinacy gives the bound \eqref{eq:128} only, which is weaker than the continuity of 
$\mu'$. The drawback of the current approach compared to \cite{jordan} is that we do not get a constructive method to 
approximate the density of $\mu$. In the forthcoming article~\cite{Zeros}, by linking the asymptotic behavior of zeros of 
the polynomials $(p_n : n \in \NN_0)$ with the value of \eqref{eq:125}, we managed to prove that, under certain additional 
hypotheses, the density of the measure $\mu$ for Jacobi matrices satisfying Theorem~\ref{thm:B} is a~continuous positive
function on $\Lambda_-$. 

The article is organized as follows: In Section~\ref{sec:2} we fix notation and we formulate basic facts. Section~\ref{sec:3}
is devoted to our change of variables. In Section~\ref{sec:4} we study spectral properties of $A$ on $\Lambda_+$. Next,
in Section~\ref{sec:5} we describe uniform diagonalization of transfer matrices on $\Lambda_-$, which is used in the rest of
the article. The proof of Theorem~\ref{thm:D} is presented in Section~\ref{sec:6}. Next, in Section~\ref{sec:7}, we prove
Theorem~\ref{thm:C}. Section~\ref{sec:8} is devoted to the proof of Theorem~\ref{thm:B}. In Section~\ref{sec:9} we study
the self-adjointness of $A$. The extensions of Theorems \ref{thm:A}--\ref{thm:C} to $\ell^1$-type perturbations is achieved
in Section~\ref{sec:10}. Finally, in Section~\ref{sec:11}, we present more concrete classes of sequences to
illustrate results of this article.

\subsection*{Notation}
By $\NN$ we denote the set of positive integers and $\NN_0 = \NN \cup \{0\}$. Throughout the whole article, we write 
$A \lesssim B$ if there is an absolute constant $c>0$ such that $A \le cB$. We write $A \asymp B$ if $A \lesssim B$ and
$B \lesssim A$. Moreover, $c$ stands for a positive constant whose value may vary from occurrence to occurrence. For any
compact set $K$, by $o_K(1)$ we denote the class of functions $f_n : K \rightarrow \RR$ such that
$\lim_{n \to \infty} f_n = 0$ uniformly on $K$. 

\subsection*{Acknowledgment}
The first author was supported by long term structural funding -- Methusalem grant of the Flemish Government. This work was completed while the first author was a postdoctoral fellow at KU Leuven. The authors would like to thank referees for their very valuable suggestions.

\section{Preliminaries} \label{sec:2}
In this section we fix the notation which is used in the rest of the article.

\subsection{Stolz class}
In this section we define a proper class of slowly oscillating sequences which is motivated by \cite{Stolz1994}, see also \cite[Section 2]{SwiderskiTrojan2019}. Let $V$ be a normed space. We say that a sequence $(x_n : n \in \NN)$ of vectors from $V$ belongs to $\calD_r(V)$ for certain $r \in \NN$, if it is bounded and for each $j \in \{1,\ldots,r\}$,
\[
	\sum_{n=1}^\infty \| \Delta^j x_n \|^{\tfrac{r}{j}} < \infty
\]
where
\begin{align*}
	\Delta^0 x_n &= x_n, \\
	\Delta^j x_n &= \Delta^{j-1} x_{n+1} - \Delta^{j-1} x_n, \quad j \geq 1.
\end{align*}
If $V$ is the real line with Euclidean norm we abbreviate $\calD_{r} = \calD_{r}(V)$. Given a compact set
$K \subset \CC$ and a normed vector space $R$, we denote by $\calD_{r}(K, R)$ the case when $V$ is the space of all
continuous mappings from $K$ to $R$ equipped with the supremum norm. 
Let us recall that $\calD_r(V)$ is an algebra provided $V$ is a normed algebra.
Let $N$ be a positive integer. We say that a sequence $(x_n : n \in \NN)$ belongs to $\calD_r^N (V)$, if
for any $i \in \{0, 1, \ldots, N-1 \}$,
\[
	(x_{nN+i} : n \in \NN) \in \calD_r(V).
\]
Again, $\calD_r^N(V)$ is an algebra provided $V$ is a normed algebra. In what follows we shall use $\calD_1^N(V)$ only.

\subsection{Finite matrices}
By $\Mat(2, \CC)$ and $\Mat(2, \RR)$ we denote the space of $2 \times 2$ matrices with complex and real entries, respectively, equipped with the spectral norm. Next, $\GL(2, \RR)$ and $\SL(2, \RR)$ consist of all matrices from $\Mat(2, \RR)$ which are invertible and of determinant equal $1$, respectively. A matrix $X \in \SL(2, \RR)$ is a \emph{non-trivial parabolic} if it is not a multiple of the identity and
$|\tr X| = 2$.

Let $X \in \Mat(2, \CC)$. By $X^t$ we denote the transpose of the matrix $X$. Let us recall that symmetrization and the discriminant are defined as
\[
	\sym(X) = \frac{1}{2} X + \frac{1}{2} X^*, \quad\text{and}\quad
	\discr(X) = (\tr X)^2 - 4 \det X,
\]
respectively. Here $X^*$ denotes the Hermitian transpose of the matrix $X$. 

By $\{ e_1, e_2 \}$ we denote the standard orthonormal basis of $\CC^2$, i.e.
\[
	e_1 =
	\begin{pmatrix}
		1 \\
		0
	\end{pmatrix} \quad \text{and} \quad
	e_2 =
	\begin{pmatrix}
		0 \\
		1
	\end{pmatrix}.
\]
Lastly, for a sequence of square matrices $(C_n : n_0 \leq n \leq n_1)$ we set
\[
	\prod_{k = n_0}^{n_1} C_k = C_{n_1} C_{n_1-1} \cdots C_{n_0}.
\]

\subsection{Generalized eigenvectors}
A sequence $(u_n : n \in \NN_0)$ is a \emph{generalized eigenvector} associated to
$x \in \CC$ and corresponding to $\eta \in \RR^2 \setminus \{0\}$, if the sequence of vectors
\begin{align*}
	\vec{u}_0 &= \eta, \\
	\vec{u}_n &= 
	\begin{pmatrix}
		u_{n-1} \\
		u_n
	\end{pmatrix}, \quad n \geq 1,
\end{align*}
satisfies
\begin{equation} 
	\label{eq:108a}
	\vec{u}_{n+1} = B_n(x) \vec{u}_n, \quad n \geq 0,
\end{equation}
where $B_n$ is the \emph{transfer matrix} defined as
\begin{equation} 
	\label{eq:108}
	\begin{aligned}
	B_0(x) &= 
	\begin{pmatrix}
		0 & 1 \\
		-\frac{1}{a_0} & \frac{x-b_0}{a_0}
	\end{pmatrix} \\
	B_n(x) &= 
	\begin{pmatrix}
		0 & 1 \\
		-\frac{a_{n-1}}{a_n} & \frac{x - b_n}{a_n}
	\end{pmatrix}
	,
	\quad n \geq 1.
	\end{aligned}
\end{equation}
To indicate the dependence on the parameters, we write $(u_n(\eta, x) : n \in \NN_0)$.
In particular, the sequence of orthogonal polynomials $(p_n(x) : n \in \NN_0)$ is the generalized eigenvector associated 
to $\eta = e_2$ and $x \in \CC$. 

\subsection{Periodic Jacobi parameters}
By $(\alpha_n : n \in \ZZ)$ and $(\beta_n : n \in \ZZ)$ we denote
$N$-periodic sequences of real and positive numbers, respectively. For each $k \geq 0$, let us define polynomials
$(\mathfrak{p}^{[k]}_n : n \in \NN_0)$ by relations
\[
	\begin{gathered}
		\mathfrak{p}_0^{[k]}(x) = 1, \qquad \mathfrak{p}_1^{[k]}(x) = \frac{x-\beta_k}{\alpha_k}, \\
		\alpha_{n+k-1} \mathfrak{p}^{[k]}_{n-1}(x) + \beta_{n+k} \mathfrak{p}^{[k]}_n(x) 
		+ \alpha_{n+k} \mathfrak{p}^{[k]}_{n+1}(x)
		= x \mathfrak{p}^{[k]}_n(x), \qquad n \geq 1.
	\end{gathered}
\]
Let
\[
	\frakB_n(x) = 
	\begin{pmatrix}
		0 & 1 \\
		-\frac{\alpha_{n-1}}{\alpha_n} & \frac{x - \beta_n}{\alpha_n}
	\end{pmatrix},
	\qquad\text{and}\qquad
	\frakX_n(x) = \prod_{j = n}^{N+n-1} \mathfrak{B}_j(x), \qquad n \in \ZZ.
\]
By $\frakA$ we denote the Jacobi matrix corresponding to 
\begin{equation*}
	\begin{pmatrix}
		\beta_0 & \alpha_0 & 0   & 0      &\ldots \\
		\alpha_0 & \beta_1 & \alpha_1 & 0       & \ldots \\
		0   & \alpha_1 & \beta_2 & \alpha_2     & \ldots \\
		0   & 0   & \alpha_2 & \beta_3   &  \\
		\vdots & \vdots & \vdots  &  & \ddots
	\end{pmatrix}.
\end{equation*}

\subsection{Tempered periodic modulations}
Let $N$ be a positive integer. We say that Jacobi parameters $(a_n : n \in \NN_0)$ and $(b_n : n \in \NN_0)$
are $N$-periodically modulated if there are two $N$-periodic sequences $(\alpha_n : n \in \ZZ)$ and
$(\beta_n : n \in \ZZ)$ of positive and real numbers, respectively, such that
\begin{enumerate}[(a)]
	\item
	\label{eq:1a}
	$\begin{aligned}[b]
	\lim_{n \to \infty} a_n = \infty
	\end{aligned},$
	\item
	\label{eq:1b}
	$\begin{aligned}[b]
	\lim_{n \to \infty} \bigg| \frac{\alpha_{n-1}}{\alpha_n} - \frac{a_{n-1}}{a_n} \bigg| = 0
	\end{aligned},$
	\item
	\label{eq:1c}
	$\begin{aligned}[b]
	\lim_{n \to \infty} \bigg| \frac{\beta_n}{\alpha_n} - \frac{b_n}{a_n} \bigg| = 0
	\end{aligned}.$
\end{enumerate}
In this article we are mostly interested in \emph{tempered} $N$-periodically modulated Jacobi parameters, i.e.
we assume that there is a sequence of positive numbers $(\gamma_n : n \in \NN_0)$ tending to infinity and satisfying
\begin{equation}
	\label{eq:90}
	\bigg( \sqrt{\gamma_n} \Big( \sqrt{\frac{\alpha_{n-1}}{\alpha_n}} - \sqrt{\frac{\gamma_{n-1}}{\gamma_n}} \Big) :
	n \in \NN \bigg), 
	\bigg(\frac{1}{\sqrt{\gamma_n}} : n \in \NN\bigg)
	\in \calD_1^N,
\end{equation}
and
\begin{equation}
	\label{eq:91}
	\lim_{n \to \infty} \big( \sqrt{\gamma_{n+N}} - \sqrt{\gamma_n} \big) = 0,
\end{equation}
such that
\begin{equation} 
	\label{eq:20a}
	\bigg(\sqrt{\gamma_n} \Big(\frac{\alpha_{n-1}}{\alpha_n} - \frac{a_{n-1}}{a_n} \Big) : n \in \NN \bigg),
	\bigg(\sqrt{\gamma_n} \Big(\frac{\beta_n}{\alpha_n} - \frac{b_n}{a_n} \Big) : n \in \NN\bigg),  
	\bigg(\frac{\gamma_n}{a_n} : n \in \NN\bigg)
	\in \calD_1^N.
\end{equation}
In view of \eqref{eq:20a}, there are two $N$-periodic sequence $(\fraks_n : n \in \ZZ)$ and
$(\frakr_n : n \in \ZZ)$ such that
\begin{equation}
	\label{eq:20e}
	\lim_{n \to \infty} \bigg| \sqrt{\alpha_n \gamma_n} \Big(\frac{\alpha_{n-1}}{\alpha_n} - \frac{a_{n-1}}{a_n} \Big) 
	-  \fraks_n \bigg| = 0,
	\qquad\text{and}\qquad
	\lim_{n \to \infty}	\bigg| \sqrt{\alpha_n \gamma_n} \Big(\frac{\beta_n}{\alpha_n} - \frac{b_n}{a_n} \Big) 
	- \frakr_n \bigg| = 0.
\end{equation}
From \eqref{eq:90} it stems that
\begin{equation} \label{eq:142}
	\lim_{n \to \infty} 
	\bigg|\frac{\alpha_{n-1}}{\alpha_n} - \frac{\gamma_{n-1}}{\gamma_n} \bigg| = 0.
\end{equation}
Hence, there is $\frakt \geq 0$, such that
\begin{equation}
	\label{eq:20d}
	\lim_{n \to \infty} \frac{\gamma_n}{a_n} = \frakt.
\end{equation}
Let us observe that, if $\frakt > 0$ then with no lose of generality we can assume that $\frakt = 1$ and $\gamma_n \equiv a_n$.
Therefore, in what follows we shall assume that $\frakt \in \{0, 1\}$.

Let us define the $N$-step transfer matrix by
\[
	X_n = B_{n+N-1} B_{n+N-2} \cdots B_{n+1} B_n.
\]
Observe that for each $i \in \{0, 1, \ldots, N-1\}$,
\[
	\lim_{j \to \infty} B_{jN+i}(x) = \frakB_i(0)
\]
and
\[
	\lim_{j \to \infty} X_{jN+i}(x) = \frakX_i(0)
\]
locally uniformly with respect to $x \in \CC$. In the whole article we assume that the matrix $\frakX_0(0)$ is a
non-trivial parabolic element of $\SL(2, \RR)$. Let $T_0$ be a matrix so that
\begin{equation} \label{eq:140}
	\frakX_0(0) = \varepsilon 
	T_0 \begin{pmatrix}
		0 & 1 \\
		-1 & 2
	\end{pmatrix}
	T_0^{-1}
\end{equation}
where
\begin{equation} 
	\label{eq:61a}
	\varepsilon = \sign{\tr \frakX_0(0)}.
\end{equation}
Since
\begin{equation} \label{eq:144}
	\frakX_i(0) = 
	\frakB_{i-1}(0) \cdots \frakB_0(0) \frakX_0(0) \frakB_0^{-1} (0) \cdots \frakB_{i-1}^{-1}(0),
\end{equation}
by taking
\begin{equation} \label{eq:141}
	T_i = \frakB_{i-1}(0) \cdots \frakB_0(0) T_0,
\end{equation}
we obtain
\[
	\frakX_i(0) = \varepsilon 
	T_i 
	\begin{pmatrix}
		0 & 1 \\
		-1 & 2
	\end{pmatrix}
	T_i^{-1}.
\]
Hence,
\[
	\frakX_i(0) = \frac{\varepsilon}{\det T_i}
	\begin{pmatrix}
		\det T_i - ([T_i]_{11} + [T_i]_{12})([T_i]_{21} + [T_i]_{22}) & ([T_i]_{11}+[T_i]_{12})^2 \\
		-([T_i]_{21} + [T_i]_{22})^2 & \det T_i + ([T_i]_{11} + [T_i]_{12})([T_i]_{21} + [T_i]_{22})
	\end{pmatrix}.
\]
In particular,
\begin{equation}
	\label{eq:33}
	\frac{([T_i]_{11} + [T_i]_{12})([T_i]_{21} + [T_i]_{22})}{\det T_i} = 1 - \varepsilon [\frakX_i(0)]_{11}	
\end{equation}
and
\begin{equation}
	\label{eq:34}
	\frac{([T_i]_{21} + [T_i]_{22})^2}{\det T_i} = -\varepsilon [\frakX_i(0)]_{21}.
\end{equation}
We often assume that 
\begin{equation}
	\label{eq:92}
		\bigg(\gamma_n \big(1 - \varepsilon \big[\frakX_n(0)\big]_{11}\big)
		\Big(\frac{\alpha_{n-1}}{\alpha_n} - \frac{a_{n-1}}{a_n} \Big)
		- \gamma_n
		\varepsilon \big[\frakX_n(0) \big]_{21}
		\Big(\frac{\beta_n}{\alpha_n} - \frac{b_n}{a_n}\Big)
		: n \in \NN
		\bigg) \in \calD_1^N.
\end{equation}
Therefore, there is $N$-periodic sequence $(\fraku_n : n \in \NN_0)$ such that
\begin{equation}
	\label{eq:63}
	\lim_{n \to \infty}
	\bigg|\gamma_n \big(1 - \varepsilon \big[\frakX_n(0)\big]_{11}\big)
    \Big(\frac{\alpha_{n-1}}{\alpha_n} - \frac{a_{n-1}}{a_n} \Big)
	- 
	\gamma_n \varepsilon \big[\frakX_n(0) \big]_{21}
	\Big(\frac{\beta_n}{\alpha_n} - \frac{b_n}{a_n} \Big)
	-
	\fraku_n
	\bigg|
	=0.
\end{equation}
Let us define 
\begin{equation}
	\label{eq:32}
	\tau(x) = \frac{1}{4} \frakS^2 - \upsilon(x)
\end{equation}
where
\begin{equation}
	\label{eq:43}
	\upsilon(x) = \frakt \varepsilon x \sum_{i' = 0}^{N-1} \frac{[\frakX_{i'}(0)]_{21}}{\alpha_{i'-1}}
	-\frakU,
\end{equation}
and
\begin{equation}
	\label{eq:4}
	\frakU = \sum_{i' = 0}^{N-1} \frac{\fraku_{i'}}{\alpha_{i'-1}},
    \qquad\text{and}\qquad
	\frakS = \sum_{i' = 0}^{N-1} \frac{\fraks_{i'}}{\alpha_{i'-1}}.
\end{equation}
In view of \cite[Proposition 2.1]{jordan},
\begin{equation} \label{eq:143a}
	\sum_{i' = 0}^{N-1} 
	\frac{[\frakX_{i'}(0)]_{21}}{\alpha_{i'-1}} = 
	-\tr \frakX_0'(0),
\end{equation}
thus
\begin{equation} \label{eq:4a}
	\tau(x) = 
	\frac{1}{4} \frakS^2
	+ 
	\frakU + \frakt \varepsilon (\tr \frakX_0'(0)) x.
\end{equation}
The following proposition answers the question when $\frakX_0(0)$ is a non-trivial parabolic element of $\SL(2, \RR)$.
\begin{proposition} \label{prop:2}
Suppose that $|\tr \frakX_0(0)| = 2$. Then $\frakX_0(0)$ is a non-trivial parabolic element of $\SL(2, \RR)$ if and only if $\tr \frakX_0'(0) \neq 0$.
\end{proposition}
\begin{proof}
	The matrix $\frakX_0(0)$ is a trivial parabolic element if and only if $\frakX_{0}(0) = \varepsilon \Id$ where $\varepsilon$ 
	is defined in \eqref{eq:61a}. Then by \eqref{eq:144} we get $\frakX_i(0) \equiv \varepsilon \Id$ for all 
	$i \in \{0, 1, \ldots, N-1\}$. Consequently, by \eqref{eq:143a}, $\tr \frakX_0'(0) = 0$. On the other hand, if
	$\frakX_0(0) \neq \varepsilon \Id$, then thanks to \cite[Proposition 3]{PeriodicIII} at least one of the numbers 
	$[\frakX_0(0)]_{21}$ and $[\frakX_1(0)]_{21}$ is non-zero. In view of \cite[Proposition 2.2]{jordan} we have
	\[
		\sum_{i' = 0}^{N-1} 
		\frac{|[\frakX_{i'}(0)]_{21}|}{\alpha_{i'-1}} =  
		|\tr \frakX_0'(0)|,
	\]
	thus $\tr \frakX_0'(0) \neq 0$.
\end{proof}

If $\frakt \neq 0$, then thanks to Proposition~\ref{prop:2} we have $\tr \frakX'(0) \neq 0$, so we can set
\begin{equation}
	\label{eq:39}
	x_0 = -\frac{\frakU + \tfrac{1}{4} \frakS^2}{\varepsilon \tr \frakX_0'(0)},
\end{equation}
and $\Lambda = \RR \setminus \{x_0\}$. Otherwise, we shall assume that $\tau \nequiv 0$ and we set $\Lambda = \RR$.
\begin{proposition}
	\label{prop:1}
	If \eqref{eq:142} is satisfied,
	then
	\begin{equation}
		\label{eq:50}
		\frakS = \lim_{n \to \infty} \sqrt{\frac{\gamma_n}{\alpha_n}} \Big( 1 - \frac{a_n}{a_{n+N}} \Big).
	\end{equation}
	In particular, $\frakS \geq 0$.
\end{proposition}
\begin{proof}
	Let us first observe that that
	\begin{align*}
		1 - \frac{a_{n}}{a_{n+N}} 
		=
		\frac{\alpha_{n}}{\alpha_{n+N}} - 
		\frac{a_{n}}{a_{n+N}}
		&=
		\sum_{j=0}^{N-1} 
		\Big( \prod_{\ell=j+1}^{N-1} \frac{\alpha_{n+\ell}}{\alpha_{n+\ell+1}} \Big)
		\Big( \frac{\alpha_{n+j}}{\alpha_{n+j+1}} - \frac{a_{n+j}}{a_{n+j+1}} \Big)
		\Big( \prod_{\ell=0}^{j-1} \frac{a_{n+\ell}}{a_{n+\ell+1}} \Big) \\
		&=
		\sum_{j=0}^{N-1} 
		\frac{\alpha_{n+j+1}}{\alpha_{n+N}}
		\frac{a_n}{a_{n+j}}
		\Big( \frac{\alpha_{n+j}}{\alpha_{n+j+1}} - \frac{a_{n+j}}{a_{n+j+1}} \Big).
	\end{align*}
	Thus
	\[
		\sqrt{\frac{\gamma_n}{\alpha_n}} \Big( 1 - \frac{a_{n}}{a_{n+N}} \Big) = 
		\sum_{j=0}^{N-1} 
		\frac{\alpha_{n+j+1}}{\alpha_{n+N}}
		\frac{a_n}{a_{n+j}}
		\frac{1}{\sqrt{\alpha_n}} 
		\sqrt{\frac{\gamma_n}{\alpha_{n+j+1} \gamma_{n+j+1}}} 
		\sqrt{\alpha_{n+j+1} \gamma_{n+j+1}} 
		\Big( \frac{\alpha_{n+j}}{\alpha_{n+j+1}} - \frac{a_{n+j}}{a_{n+j+1}} \Big).
	\]
	Hence, by the hypothesis~\ref{eq:1b}, \eqref{eq:142} and \eqref{eq:20e},
	\begin{align*}
		\lim_{n \to \infty}
		\sqrt{\frac{\gamma_n}{\alpha_n}} \Big( 1 - \frac{a_{n}}{a_{n+N}} \Big) &=
		\lim_{n \to \infty}
		\sum_{j=0}^{N-1} 
		\frac{\alpha_{n+j+1}}{\alpha_{n+N}}
		\frac{\alpha_n}{\alpha_{n+j}}
		\frac{1}{\sqrt{\alpha_n}} 
		\sqrt{\frac{\alpha_n}{\alpha_{n+j+1} \alpha_{n+j+1}}} 
		\fraks_{n+j+1} \\
		&=
		\lim_{n \to \infty}
		\sum_{j=0}^{N-1} \frac{\fraks_{n+j+1}}{\alpha_{n+j}} = \frakS
	\end{align*}
	and \eqref{eq:50} follows. To see the last statement, we assume, contrary to our claim, that $\frakS < 0$.
	Then there is $n_0 > 0$ such that for $n \geq n_0$,
	\[
		a_n - a_{n+N} > 0.
	\]
	Therefore, for each $n \geq n_0$,
	\[
		a_n \leq \max_{0 \leq i \leq N-1} a_{n_0+i} < \infty,
	\]
	which contradicts the hypothesis~\ref{eq:1a}.
\end{proof}

\section{The shifted conjugation} \label{sec:3}
In this section we freely use the notation introduced in Section~\ref{sec:2}.
Fix $i \in \{0, 1, \ldots, N-1\}$ and set
\begin{equation}
	\label{eq:3}
	Z_j = T_i 
	\begin{pmatrix}
		1 & 1 \\
		e^{\vtheta_j} & e^{-\vtheta_j}
	\end{pmatrix}
\end{equation}
where $T_i$ has been defined in \eqref{eq:140} and \eqref{eq:141}, and
\begin{equation}
	\label{eq:21}
	\vtheta_j(x) = \sqrt{\frac{\alpha_{i-1} \abs{\tau(x)}}{\gamma_{(j+1)N+i-1}}}.
\end{equation}
The proof of the following theorem is a generalization of \cite[Theorem 3.2]{jordan}. 
\begin{theorem}
	\label{thm:2}
	Let $N$ be a positive integer and $i \in \{0, 1, \ldots N-1\}$. Let $(\gamma_n : n \in \NN)$ be a sequence of positive 
	numbers tending to infinity and satisfying \eqref{eq:87} and \eqref{eq:89}. Let $(a_n : n \in \NN_0)$ and 
	$(b_n : n \in \NN_0)$ be $\gamma$-tempered $N$-periodically modulated Jacobi parameters such that $\frakX_0(0)$ is 
	a non-trivial parabolic element. Suppose that \eqref{eq:88} holds true with $\varepsilon = \sign{\tr \frakX_0(0)}$.
	Then for any compact interval $K \subset \Lambda$,
	\begin{equation} 
		\label{eq:15}
		Z_j^{-1} Z_{j+1} = \Id + \sqrt{\frac{\alpha_{i-1}}{\gamma_{(j+1)N+i-1}}} Q_j
	\end{equation}
	where $(Q_j)$ is a sequence from $\calD_1\big(K, \Mat(2, \RR)\big)$ convergent uniformly on $K$ to the zero matrix.
\end{theorem}
\begin{proof}
	In the proof we denote by $(\delta_j)$ a generic sequence from $\calD_1$ tending to zero which may
	change from line to line. By a straightforward computation we obtain
	\begin{align*}
		Z_j^{-1} Z_{j+1} 
		&=
		\frac{1}{\det Z_j} 
		\begin{pmatrix}
			\ue^{-\vartheta_j} & -1\\
			-\ue^{\vartheta_j} & 1
		\end{pmatrix}
		\begin{pmatrix}
			1 & 1\\
			\ue^{\vartheta_{j+1}} & \ue^{-\vartheta_{j+1}}
		\end{pmatrix} \\
		&=
		\frac{1}{\ue^{-\vartheta_{j}} - \ue^{\vartheta_{j}}}
		\begin{pmatrix}
			f_j & g_j \\
			\tilde{g}_j & \tilde{f}_j
		\end{pmatrix}
	\end{align*}
	where
	\begin{alignat*}{3}
		&f_j = \ue^{-\vartheta_j} - \ue^{\vartheta_{j+1}}, &\qquad
		&g_j = \ue^{-\vartheta_j} - \ue^{-\vartheta_{j+1}} \\
		&\tilde{g}_j = - \ue^{\vartheta_j} +  \ue^{\vartheta_{j+1}} &
		&\tilde{f}_j = - \ue^{\vartheta_j} +  \ue^{-\vartheta_{j+1}}.
	\end{alignat*}
	Observe that
	\begin{equation} \label{eq:145a}
		\gamma_{n} \bigg(
		\frac{1}{\sqrt{\gamma_{n}}}
		-
		\frac{1}{\sqrt{\gamma_{n+N}}}
		\bigg)
		=
		\big(\sqrt{\gamma_{n+N}} - \sqrt{\gamma_n}\big) \sqrt{\frac{\gamma_n}{\gamma_{n+N}}}.
	\end{equation}
	Notice that
	\[
		\sqrt{\frac{\gamma_{n+N}}{\alpha_{n+N}}} - 
		\sqrt{\frac{\gamma_{n}}{\alpha_{n}}} =
		\sum_{j=0}^{N-1} 
		\bigg( 
		\sqrt{\frac{\gamma_{n+j+1}}{\alpha_{n+j+1}}} -
		\sqrt{\frac{\gamma_{n+j}}{\alpha_{n+j}}}
		\bigg).
	\]
	Hence, by $N$-periodicity of $(\alpha_n)$,
	\[
		\sqrt{\gamma_{n+N}} - \sqrt{\gamma_{n}} =
		\sum_{j=0}^{N-1} 
		\sqrt{\frac{\alpha_n}{\alpha_{n+j}}}
		\sqrt{\gamma_{n+j+1}}
		\bigg(
		\sqrt{\frac{\alpha_{n+j}}{\alpha_{n+j+1}}} - 
		\sqrt{\frac{\gamma_{n+j}}{\gamma_{n+j+1}}}
		\bigg).
	\]
	Now, by \eqref{eq:90} we obtain 
	\begin{equation} \label{eq:145b}
		\big( \sqrt{\gamma_{n+N}} - \sqrt{\gamma_{n}} : n \in \NN \big) 
		\in \calD_1^N.
	\end{equation}
	Similarly, $N$-periodicity of $(\alpha_n)$ and \eqref{eq:90} leads to
	\[
		\bigg( \sqrt{\frac{\gamma_{n-1}}{\gamma_n}} : n \in \NN \bigg) \in \calD_1^N.
	\]
	For fixed $j \in \NN$, we have
	\[
		\sqrt{\frac{\gamma_{n}}{\gamma_{n+j}}} =
		\sqrt{\frac{\gamma_{n}}{\gamma_{n+1}}}
		\sqrt{\frac{\gamma_{n+1}}{\gamma_{n+2}}} \ldots
		\sqrt{\frac{\gamma_{n+j-1}}{\gamma_{n+j}}},
	\]
	thus
	\begin{equation} 
		\label{eq:145c}
		\bigg( \sqrt{\frac{\gamma_{n}}{\gamma_{n+j}}} : n \in \NN \bigg) \in \calD_1^N.
	\end{equation}
	Hence, by \eqref{eq:145a}--\eqref{eq:145c}
	\begin{align*}
		\frac{1}{\sqrt{\gamma_{(j+2)N+i-1}}}
		=
		\frac{1}{\sqrt{\gamma_{(j+1)N+i-1}}}
		+
		\frac{1}{\gamma_{jN}} \delta_j,
	\end{align*}
	and so
	\begin{equation}
		\label{eq:23}
		\vartheta_{j+1} = 
		\vartheta_j 
		+ \frac{1}{\gamma_{jN}} \delta_j.
	\end{equation}
	Next, we compute
	\[
		\ue^{\vartheta_{j+1}}
		= 1 + \vartheta_{j+1} + \frac{1}{2} \vartheta_{j+1}^2 + \frac{1}{\gamma_{jN}} \delta_j
	\]
	and
	\[
		\ue^{-\vartheta_j} = 1 - \vartheta_j + \frac{1}{2} \vartheta_j^2 + \frac{1}{\gamma_{jN}} \delta_j.
	\]
	Hence,
	\begin{align*}
		f_j 
		&=1 - \vartheta_j + \frac{1}{2} \vartheta_j^2 
		-\bigg(1 + \vartheta_{j+1} + \frac{1}{2} \vartheta_{j+1}^2\bigg)
		+ \frac{1}{\gamma_{jN}} \delta_j\\
		&=
		-2 \vartheta_j + \frac{1}{\gamma_{jN}} \delta_j.
	\end{align*}
	Since $\frac{x}{\sinh (x)}$ is an even $\calC^2(\RR)$ function, we have
	\[
		\frac{\vartheta_{j}}{\sinh( \vartheta_{j} )} = 1 + \frac{1}{\sqrt{\gamma_{jN}}} \delta_j.
	\]
	Therefore,
	\begin{align*}
		\frac{1}{\ue^{-\vartheta_{j}} - \ue^{\vartheta_{j}}} f_j
		&=
		\frac{f_j}{-2\vartheta_{j}} \frac{\vartheta_j}{\sinh(\vartheta_{j})} \\
		&=
		\bigg(1 + \frac{1}{\sqrt{\gamma_{jN}}} \delta_j \bigg)
		\bigg(1 + \frac{1}{\sqrt{\gamma_{jN}}}\delta_j \bigg) \\
		&=
		1 + \frac{1}{\sqrt{\gamma_{jN}}} \delta_j.
	\end{align*}
	Analogously, we treat $g_j$. Namely, we write
	\begin{align*}
		g_j &= 
		1 - \vartheta_j + \frac{1}{2} \vartheta_j^2 
		-
		\bigg(1 - \vartheta_{j+1} + \frac{1}{2}\vartheta_{j+1}^2\bigg)
		+
		\frac{1}{\gamma_{jN}} \delta_j \\
		&=
		\frac{1}{\gamma_{jN}} \delta_j.
	\end{align*}
	Hence,
	\begin{align*}
		\frac{1}{\ue^{-\vartheta_{j}} - \ue^{\vartheta_{j}}} g_j
		&=
		\frac{1}{\sqrt{\gamma_{jN}}} \delta_j. 
	\end{align*}
	Similarly, we can find that
	\begin{alignat*}{2}
		\frac{1}{\ue^{-\vartheta_{j}} - \ue^{\vartheta_{j}}} \tilde{f}_j
		&=
		1 + \frac{1}{\sqrt{\gamma_{jN}}} \delta_j,\\
		\frac{1}{\ue^{-\vartheta_{j}} - \ue^{\vartheta_{j}}} \tilde{g}_j
		&=
		\frac{1}{\sqrt{\gamma_{jN}}} \delta_j.
	\end{alignat*}
	Consequently,
	\[
		Z_{j}^{-1} Z_{j+1} = \Id + \sqrt{\frac{\alpha_{i-1}}{\gamma_{(j+1)N+i-1}}} Q_j
	\]
	where $(Q_j)$ is a sequence from $\calD_1\big(K, \Mat(2, \RR)\big)$ for any compact interval
	$K \subset \Lambda$ convergent to the zero matrix proving the formula \eqref{eq:15}. 
\end{proof}

\begin{theorem} 
	\label{thm:1}
	Let $N$ be a positive integer and $i \in \{0, 1, \ldots N-1\}$. Let $(\gamma_n : n \in \NN)$ be a sequence of positive 
	numbers tending to infinity and satisfying \eqref{eq:87} and \eqref{eq:89}. Let $(a_n : n \in \NN_0)$ and 
	$(b_n : n \in \NN_0)$ be $\gamma$-tempered $N$-periodically modulated Jacobi parameters such that $\frakX_0(0)$ is 
	a non-trivial parabolic element. Suppose that \eqref{eq:88} holds true with $\varepsilon = \sign{\tr \frakX_0(0)}$. 
	Then for any compact interval $K \subset \Lambda$,
	\[
		Z_{j+1}^{-1} X_{jN+i} Z_{j} = \varepsilon \bigg( \Id 
		+ 
		\sqrt{\frac{\alpha_{i-1}}{\gamma_{(j+1)N+i-1}}}
		R_j \bigg)
	\]
	where $(R_j)$ is a sequence from $\calD_1\big(K, \Mat(2, \RR)\big)$ convergent uniformly on $K$ to
	\begin{equation}
		\label{eq:26}
		\calR_i(x) = \frac{\sqrt{\abs{\tau(x)}}}{2}
		\begin{pmatrix}
			1 & -1\\
			1 & -1
		\end{pmatrix}
		-
		\frac{\upsilon(x)}{2 \sqrt{\abs{\tau(x)}}}
		\begin{pmatrix}
			1 & 1\\
			-1 & -1
		\end{pmatrix}
		-
		\frac{\frakS}{2}
		\begin{pmatrix}
			1 & -1\\
			-1 & 1
		\end{pmatrix}.
	\end{equation}
	In particular, $\discr \calR_i = 4 \tau(x)$.
\end{theorem}
\begin{proof}
	In the following argument, we denote by $(\delta_j)$ and $(\calE_j)$ generic sequences tending to zero
	from $\calD_1$ and $\calD_1\big(K, \Mat(2, \RR)\big)$, respectively, which may change from line to line.

	Observe that by \eqref{eq:87}
	\[
		\lim_{j \to \infty} \frac{\gamma_{jN+i'-1}}{\gamma_{jN+i'}} = \frac{\alpha_{i'-1}}{\alpha_{i'}},
	\]
	and
	\[
		\bigg(\frac{\gamma_{jN+i'-1}}{\gamma_{jN+i'}} : j \in \NN \bigg) \in \calD_1.
	\]
	Hence,
	\begin{align*}
		\frac{1}{\gamma_{jN+i'}} 
		&= 
		\frac{1}{\gamma_{jN+i'-1}} \frac{\gamma_{jN+i'-1}}{\gamma_{jN+i'}} \\
		&=
		\frac{1}{\gamma_{jN+i'-1}} \frac{\alpha_{i'-1}}{\alpha_{i'}} 
		-
		\frac{1}{\gamma_{jN+i'-1}} \bigg(
		\frac{\gamma_{jN+i'-1}}{\gamma_{jN+i'}} - \frac{\alpha_{i'-1}}{\alpha_{i'}}\bigg)\\
		&=
		\frac{\alpha_{i'-1}}{\alpha_{i'}} \frac{1}{\gamma_{jN+i'-1}} + \frac{1}{\gamma_{jN}} \delta_j.
	\end{align*}
	Consequently,
	\begin{equation}
		\label{eq:17}
		\frac{\alpha_{i'}}{\gamma_{jN+i'}} = 
		\frac{\alpha_{i-1}}{\gamma_{(j+1)N+i-1}} + \frac{1}{\gamma_{jN}} \delta_j.
	\end{equation}
	Next, for $x \in K$, we have
	\begin{align}
		\nonumber
		\frac{x}{a_{jN+i'}} 
		&= \frac{x}{\gamma_{jN+i'}} \frac{\gamma_{jN+i'}}{a_{jN+i'}} \\
		\nonumber
		&=
		\frac{\frakt x}{\gamma_{jN+i'}} 
		+ \frac{x}{\gamma_{jN+i'}} \bigg(\frac{\gamma_{jN+i'}}{a_{jN+i'}} - \frakt\bigg)\\
		\label{eq:12}
		&=
		\frac{\frakt x}{\gamma_{jN+i'}}
		+ 
		\frac{1}{\gamma_{jN}}
		\delta_j.
	\end{align}
	Let
	\[
		\xi_n = \frac{\alpha_{n-1}}{\alpha_n} - \frac{a_{n-1}}{a_n}
		\quad\text{and}\quad
		\zeta_n = \frac{\beta_n}{\alpha_n} - \frac{b_n}{a_n}.
	\]
	Using \eqref{eq:12} we can write
	\begin{align*}
		B_{jN+i'}
		&=
		\begin{pmatrix}
			0 & 1 \\
			-\frac{\alpha_{i'-1}}{\alpha_{i'}} + \xi_{jN+i'} + \frac{1}{\gamma_{jN+i'}} \delta_j
			& 
			-\frac{\beta_{i'}}{\alpha{i'}}
			+\frac{x \frakt}{\gamma_{jN+i'}}
			+\zeta_{jN+i'} 
			+ \frac{1}{\gamma_{jN+i'}} \delta_j
		\end{pmatrix} \\
		&=
		\begin{pmatrix}
			0 & 1 \\
			-\frac{\alpha_{i'-1}}{\alpha_{i'}} & - \frac{\beta_{i'}}{\alpha_{i'}}
		\end{pmatrix}
		+
		\begin{pmatrix}
			0 & 0 \\
			\xi_{jN+i'} & \frac{x \frakt}{\gamma_{jN+i'}} + \zeta_{jN+i'}
		\end{pmatrix}
		+
		\frac{1}{\gamma_{jN+i'}} \calE_j \\
		&=
		\begin{pmatrix}
			0 & 1 \\
			-\frac{\alpha_{i'-1}}{\alpha_{i'}} & - \frac{\beta_{i'}}{\alpha_{i'}}
		\end{pmatrix}
		\left\{
		\Id
		+
		\frac{\alpha_{i'}}{\alpha_{i'-1}}
		\begin{pmatrix}
			-\frac{\beta_{i'}}{\alpha_{i'}} & -1 \\
			\frac{\alpha_{i'-1}}{\alpha_{i'}} & 0
		\end{pmatrix}
		\begin{pmatrix}
			0 & 0 \\
			\xi_{jN+i'} & \frac{x \frakt}{\gamma_{jN+i'}} + \zeta_{jN+i'}
		\end{pmatrix}
		+
		\frac{1}{\gamma_{jN+i'}} \calE_j \right\} \\
		&=
		\begin{pmatrix}
			0 & 1 \\
			-\frac{\alpha_{i'-1}}{\alpha_{i'}} & - \frac{\beta_{i'}}{\alpha_{i'}}
		\end{pmatrix}
		\left\{
		\Id -
		\frac{\alpha_{i'}}{\alpha_{i'-1}}
		\begin{pmatrix}
			\xi_{jN+i'} & \zeta_{jN+i'} + \frac{x \frakt}{\gamma_{jN+i'}}\\
			0 & 0
		\end{pmatrix}
		+
		\frac{1}{\gamma_{jN}} \calE_j
		\right\}.
	\end{align*}
	Hence, 
	\begin{align*}
		X_{jN+i}
		&=
		B_{jN+i+N-1} \cdots B_{jN+i+1} B_{jN+i} \\
		&=
		\frakX_{i}(0)
		\Bigg\{
		\Id
		-
		\sum_{i' = i}^{N+i-1}
		\frac{\alpha_{i'}}{\alpha_{i'-1}}
		\Big(
		\frakB_{i'-1}(0) \cdots  \frakB_i(0)\Big)^{-1}
		\begin{pmatrix}
			\xi_{jN+i'} & \zeta_{jN+i'} + \frac{x \frakt}{\gamma_{jN+i'}}\\
			0 & 0
		\end{pmatrix}
		\Big(\frakB_{i'-1}(0) \cdots \frakB_i(0)\Big) \\
		&\phantom{\frakX_i(0)\Bigg\{\Id}+
		\frac{1}{\gamma_{jN}} \calE_j
		\Bigg\},
	\end{align*}
	and so
	\begin{align*}
		&Z_{j+1}^{-1} X_{jN+i} Z_j \\
		&\qquad=
		Z_{j+1}^{-1} \mathfrak{X}_{i}(0) Z_j
		\Bigg\{
		\Id
		-
		\sum_{i' = i}^{N+i-1} 
		\frac{\alpha_{i'}}{\alpha_{i'-1}}
		\begin{pmatrix}
			1 & 1 \\
			\ue^{\vartheta_j} & \ue^{-\vartheta_j}
		\end{pmatrix}^{-1}
		T_{i'}^{-1}
		\begin{pmatrix}
			\xi_{jN+i'}& \zeta_{jN+i'} + \frac{x \frakt}{\gamma_{jN+i'}}\\
			0 & 0
		\end{pmatrix}
		T_{i'}
		\begin{pmatrix}
			1 & 1 \\
			\ue^{\vartheta_j} & \ue^{-\vartheta_j}
		\end{pmatrix} \\
		&\qquad\phantom{=Z_{j+1}^{-1} \mathfrak{X}_{i}(0) Z_{j} \Bigg\{}+
		\frac{1}{\sqrt{\gamma_{jN}}} \calE_j
		\Bigg\}.
	\end{align*}
	To find the asymptotic of the first factor, we write
	\[
		Z_{j+1}^{-1} \mathfrak{X}_{i}(0) Z_j
		=
		\frac{\varepsilon}{\ue^{-\vartheta_{j+1}} - \ue^{\vartheta_{j+1}}}
		\begin{pmatrix}
			f_j & g_j \\
			\tilde{g}_j & \tilde{f}_j
		\end{pmatrix}
	\]
	where
	\begin{alignat*}{3}
		f_j &= \ue^{{\vartheta_{j}} - \vartheta_{j+1}} + 1 - 2\ue^{\vartheta_{j}}, &\qquad
		g_j &= \ue^{{-\vartheta_{j}} - \vartheta_{j+1}} + 1 - 2\ue^{-\vartheta_{j}}, \\
		\tilde{g}_j &= -\ue^{\vartheta_{j}+\vartheta_{j+1}} -1 +2\ue^{\vartheta_{j}}, &\qquad
		\tilde{f}_j &= -\ue^{-\vartheta_{j}+\vartheta_{j+1}} -1 + 2 \ue^{-\vartheta_{j}}.
	\end{alignat*}
	Since by \eqref{eq:23}
	\[
		\ue^{{\vartheta_{j}} - \vartheta_{j+1}} = 1 + \frac{1}{\gamma_{jN}} \delta_j
		\qquad\text{and}\qquad
		\ue^{\vartheta_{j}} = 1 + \vartheta_j + \frac{1}{2} \vartheta_j^2 + \frac{1}{\gamma_{jN}} \delta_j,
	\]
	we get
	\begin{align*}
		f_j 
		&= 1 + 1 - 2 \bigg(1 + \vartheta_{j} + \frac{1}{2}\vartheta_{j}^2 \bigg)+ \frac{1}{\gamma_{jN}} \delta_j \\
		&= -2 \vartheta_{j} - \vartheta_{j}^2 + \frac{1}{\gamma_{jN}} \delta_j.
	\end{align*}
	Moreover,
	\begin{align*}
		\frac{\vartheta_j}{\vartheta_{j+1}} = 1 + \frac{1}{\sqrt{\gamma_{jN}}} \delta_j.
	\end{align*}
	Thus
	\begin{align}
		\nonumber
		\frac{1}{\ue^{-\vartheta_{j+1}} - \ue^{\vartheta_{j+1}}} f_j
		&=
		\frac{f_j}{-2\vartheta_{j}} \frac{\vartheta_j}{\vartheta_{j+1}} \frac{\vartheta_{j+1}}{\sinh \vartheta_{j+1}} \\
		\nonumber
		&=
		\bigg(1 + \frac{1}{2} \vartheta_j + \frac{1}{\sqrt{\gamma_{jN}}} \delta_j \bigg)
		\bigg(1 + \frac{1}{\sqrt{\gamma_{jN}}} \delta_j \bigg)
		\bigg(1 + \frac{1}{\sqrt{\gamma_{jN}}} \delta_j\bigg) \\
		\label{eq:14}
		&=
		1 + \frac{1}{2} \vartheta_{j} 
		+ \frac{1}{\sqrt{\gamma_{jN}}} \delta_j.
	\end{align}
	Analogously, we can find that
	\[
		\tilde{f}_j = -2 \vartheta_{j} + \vartheta_{j}^2 +\frac{1}{\gamma_{jN}} \delta_j,
	\]
	and
	\begin{equation}
		\label{eq:18}
		\frac{1}{\ue^{-\vartheta_{j+1}} - \ue^{\vartheta_{j+1}}} \tilde{f}_{j} 
		= 1 -\frac{1}{2} \vartheta_{j} 
		 +
		\frac{1}{\sqrt{\gamma_{jN}}} \delta_j.
	\end{equation}
	Next, we write
	\begin{align*}
		g_j &= 1 - \vartheta_{j} - \vartheta_{j+1} 
		+ \frac{1}{2} (\vartheta_{j}+\vartheta_{j+1})^2 + 1 - 2 \bigg(1 -\vartheta_{j} + \frac{1}{2} \vartheta_{j}^2\bigg) 
		+ \frac{1}{\gamma_{jN}} \delta_j \\
		&= \vartheta_{j}^2 + \frac{1}{\gamma_{jN}} \delta_j,
	\end{align*}
	thus
	\begin{equation}
		\label{eq:19}
		\frac{1}{\ue^{-\vartheta_{j+1}} - \ue^{\vartheta_{j+1}}} g_j 
		= -\frac{1}{2} \vartheta_{j} 
		+ \frac{1}{\sqrt{\gamma_{jN}}} \delta_j.
	\end{equation}
	Similarly, we get
	\[
		\tilde{g}_j = -\vartheta_{j}^2 +
		+
		\frac{1}{\gamma_{jN}} \delta_j,
	\]
	and so
	\begin{equation}
		\label{eq:27}
		\frac{1}{\ue^{-\vartheta_{j+1}} - \ue^{\vartheta_{j+1}}} \tilde{g}_j = 
		 \frac{1}{2} \vartheta_{j} + 
		 +\frac{1}{\sqrt{\gamma_{jN}}} \delta_j.
	\end{equation}
	Consequently, by \eqref{eq:14}--\eqref{eq:27} we obtain
	\begin{equation}
		\label{eq:36}
		Z_{j+1}^{-1} \mathfrak{X}_{i}(0) Z_{j}
		=
		\varepsilon 
		\left\{
		\Id + \frac{1}{2} \vartheta_{j} 
		\begin{pmatrix}
			1 & -1 \\
			1 & -1
		\end{pmatrix} 
		+ \frac{1}{\sqrt{\gamma_{jN}}} \calE_j
		\right\}.
	\end{equation}
	Next, we observe that
	\[
		\ue^{\vartheta_{j}} = 1 + \delta_j, \qquad
		\frac{\vartheta_{j}}{\sinh \vartheta_{j}} = 1 + \delta_j,
	\]
	thus in view of \eqref{eq:20e}, for each $i' \in \{0, 1, \ldots, N-1\}$ we have
	\begin{align*}
		&
		\begin{pmatrix}
			1 & 1 \\
			\ue^{\vartheta_j} & \ue^{-\vartheta_j}
		\end{pmatrix}^{-1}
		T_{i'}^{-1}
		\begin{pmatrix}
			0 & \frac{x \frakt}{\gamma_{jN+i'}} \\
			0 & 0
		\end{pmatrix}
		T_{i'}
		\begin{pmatrix}
			1 & 1 \\
			\ue^{\vartheta_j} & \ue^{-\vartheta_j}
		\end{pmatrix}  \\
		&\qquad=
		-
		\frac{1}{2\vartheta_j}
		\frac{x \frakt}{\gamma_{jN+i'}}
		\begin{pmatrix}
			1 & -1\\
			-1 & 1
		\end{pmatrix}
		T_{i'}^{-1}
		\begin{pmatrix}
			0 & 1\\
			0 & 0
		\end{pmatrix}
		T_{i'}
		\begin{pmatrix}
			1 & 1 \\
			1 & 1
		\end{pmatrix} 
		+
		\frac{1}{\sqrt{\gamma_{jN}}} \calE_j \\
		&\qquad=
		-\frac{1}{2\vartheta_j}
		\frac{x \frakt}{\gamma_{jN+i'}}
		\frac{([T_{i'}]_{21} + [T_{i'}]_{22})^2}{\det T_{i'}}
		\begin{pmatrix}
			1 & 1\\
			-1 & -1
		\end{pmatrix}
		+
		\frac{1}{\sqrt{\gamma_{jN}}} \calE_j \\
		&\qquad=
		\frac{1}{2\vartheta_j}
		\frac{x \frakt}{\gamma_{jN+i'}}
		\big(\varepsilon [\frakX_{i'}(0)]_{21}\big)
		\begin{pmatrix}
			1 & 1\\
			-1 & -1
		\end{pmatrix}
		+
		\frac{1}{\sqrt{\gamma_{jN}}} \calE_j.
	\end{align*}
	We write	
	\begin{align*}
		&
		\begin{pmatrix}
			1 & 1 \\
			\ue^{\vartheta_j} & \ue^{-\vartheta_j}
		\end{pmatrix}^{-1}
		T_{i'}^{-1}
		\begin{pmatrix}
			\xi_{jN+i'} & \zeta_{jN+i'} \\
			0 & 0
		\end{pmatrix}
		T_{i'}
		\begin{pmatrix}
			1 & 1 \\
			\ue^{\vartheta_j} & \ue^{-\vartheta_j}
		\end{pmatrix}  \\
		&\qquad=
		\frac{1}{\ue^{-\vartheta_j} - \ue^{\vartheta_j}}
		\begin{pmatrix}
			1  & -1 \\
			-1 & 1
		\end{pmatrix}
		T_{i'}^{-1}
		\begin{pmatrix}
			\xi_{jN+i'} & \zeta_{jN+i'}\\
			0 & 0
		\end{pmatrix}
		T_{i'}
		\begin{pmatrix}
			1 & 1 \\
			1 & 1 
		\end{pmatrix}  \\
		&\qquad\phantom{=}+
		\frac{1}{\ue^{-\vartheta_j} - \ue^{\vartheta_j}}
		\begin{pmatrix}
			\ue^{-\vartheta_j}-1 & 0 \\
			1-\ue^{\vartheta_j} & 0
		\end{pmatrix}
		T_{i'}^{-1}
		\begin{pmatrix}
			\xi_{jN+i'} & \zeta_{jN+i'} \\
			0 & 0
		\end{pmatrix}
		T_{i'}
		\begin{pmatrix}
			1 & 1 \\
			1 & 1
		\end{pmatrix}  \\
		&\qquad\phantom{=}+
		\frac{1}{\ue^{-\vartheta_j} - \ue^{\vartheta_j}}
		\begin{pmatrix}
			1 & -1 \\
			-1 & 1
		\end{pmatrix}
		T_{i'}^{-1}
		\begin{pmatrix}
			\xi_{jN+i'} & \zeta_{jN+i'} \\
			0 & 0
		\end{pmatrix}
		T_{i'}
		\begin{pmatrix}
			0 & 0 \\
			\ue^{\vartheta_j}-1 & \ue^{-\vartheta_j} -1
		\end{pmatrix}
		+ \frac{1}{\sqrt{\gamma_{jN}}} \calE_j.
	\end{align*}
	We observe that
	\begin{align*}
		&\frac{1}{\ue^{-\vartheta_j} - \ue^{\vartheta_j}}
		\begin{pmatrix}
			1  & -1 \\
			-1 & 1
		\end{pmatrix}
		T_{i'}^{-1}
		\begin{pmatrix}
			\xi_{jN+i'} & 0\\
			0 & 0
		\end{pmatrix}
		T_{i'}
		\begin{pmatrix}
			1 & 1 \\
			1 & 1 
		\end{pmatrix} \\
		&\qquad=
		\frac{\xi_{jN+i'}}{\ue^{-\vartheta_j} - \ue^{\vartheta_j}}
		\frac{([T_{i'}]_{11} + [T_{i'}]_{12})([T_{i'}]_{21} + [T_{i'}]_{22})}{\det T_{i'}}
		\begin{pmatrix}
			1 & 1 \\
			-1 & -1
		\end{pmatrix} \\
		&\qquad=
		\frac{\xi_{jN+i'}}{\ue^{-\vartheta_j} - \ue^{\vartheta_j}}
		\big(1 - \varepsilon [\frakX_{i'}(0)]_{11}\big)
		\begin{pmatrix}	
			1 & 1 \\
			-1 & -1
		\end{pmatrix},
	\end{align*}
	and
	\begin{align*}
		&
		\frac{1}{\ue^{-\vartheta_j} - \ue^{\vartheta_j}}
		\begin{pmatrix}
			1  & -1 \\
			-1 & 1
		\end{pmatrix}
		T_{i'}^{-1}
		\begin{pmatrix}
			0 & \zeta_{jN+i'} \\
			0 & 0
		\end{pmatrix}
		T_{i'}
		\begin{pmatrix}
			1 & 1 \\
			1 & 1 
		\end{pmatrix}\\
		&\qquad=
		\frac{\zeta_{jN+i'}}{\ue^{-\vartheta_j} - \ue^{\vartheta_j}}
		\frac{([T_{i'}]_{21} + [T_{i'}]_{22})^2}{\det T_{i'}}
		\begin{pmatrix}
			1 & 1 \\
			-1 & -1
		\end{pmatrix} \\
		&\qquad=
		-
		\frac{\zeta_{jN+i'}}{\ue^{-\vartheta_j} - \ue^{\vartheta_j}}
		\big(\varepsilon [\frakX_{i'}(0)]_{21}\big)
		\begin{pmatrix}
			1 & 1 \\
			-1 & -1
		\end{pmatrix}.
	\end{align*}
	Hence, by \eqref{eq:88} and \eqref{eq:63},
	\[
		\frac{1}{\ue^{-\vartheta_j} - \ue^{\vartheta_j}}
		\begin{pmatrix}
			1  & -1 \\
			-1 & 1
		\end{pmatrix}
		T_{i'}^{-1}
		\begin{pmatrix}
			\xi_{jN+i'} & \zeta_{jN+i'} \\
			0 & 0
		\end{pmatrix}
		T_{i'}
		\begin{pmatrix}
			1 & 1 \\
			1 & 1 
		\end{pmatrix}
		=
		-\frac{1}{2\vartheta_j} \frac{\fraku_{i'}}{\gamma_{jN+i'}} 
		\begin{pmatrix}
			1 & 1 \\
			-1 & -1
		\end{pmatrix}
		+
		\frac{1}{\sqrt{\gamma_{jN}}} \calE_j.
	\]
	Next, let us notice that
	\[
		\bigg(\frac{e^{\vartheta_j} - 1}{e^{\vartheta_j} - e^{-\vartheta_j}} : j \in \NN\bigg) \in \calD_1, 
	\]
	and
	\[
		\lim_{j \to \infty} \frac{e^{\vartheta_j} - 1}{e^{\vartheta_j} - e^{-\vartheta_j}} = \frac{1}{2}.
	\]
	Therefore, by \eqref{eq:20e}, we get
	\begin{align*}
		&
		\frac{1}{\ue^{-\vartheta_j} - \ue^{\vartheta_j}}
		\begin{pmatrix}
			\ue^{-\vartheta_j}-1 & 0 \\
			1-\ue^{\vartheta_j} & 0
		\end{pmatrix}
		T_{i'}^{-1}
		\begin{pmatrix}
			\xi_{jN+i'} & 0 \\
			0 & 0
		\end{pmatrix}
		T_{i'}
		\begin{pmatrix}
			1 & 1 \\
			1 & 1
		\end{pmatrix}\\ 
		&\qquad=
		\frac{1}{2}
		\begin{pmatrix}
			1 & 0 \\
			1 & 0
		\end{pmatrix}
		T_{i'}^{-1}
		\begin{pmatrix}
			\frac{\fraks_{i'}}{\sqrt{\alpha_{i'} \gamma_{jN+i'}}} & 0 \\
			0 & 0
		\end{pmatrix}
		T_{i'}
		\begin{pmatrix}
			1 & 1 \\
			1 & 1
		\end{pmatrix} 
		+
		\frac{1}{\sqrt{\gamma_{jN}}} \calE \\
		&\qquad=
		\frac{\fraks_{i'}}{2 \sqrt{ \alpha_{i'} \gamma_{jN+i'}}}
		\frac{([T_{i'}]_{11} + [T_{i'}]_{12}) [T_{i'}]_{22}}{\det T_{i'}}
		\begin{pmatrix}
			1 & 1 \\
			1 & 1
		\end{pmatrix}
		+
		\frac{1}{\sqrt{\gamma_{jN}}} \calE_j,
	\end{align*}
	and
	\begin{align*}
		&
		\frac{1}{\ue^{-\vartheta_j} - \ue^{\vartheta_j}}
		\begin{pmatrix}
			1 & -1 \\
			-1 & 1
		\end{pmatrix}
		T_{i'}^{-1}
		\begin{pmatrix}
			\xi_{jN+i'} & 0 \\
			0 & 0
		\end{pmatrix}
		T_{i'}
		\begin{pmatrix}
			0 & 0 \\
			e^{\vartheta_j}-1 & e^{-\vartheta_j} - 1
		\end{pmatrix} \\
		&\qquad=
		\frac{1}{2}
		\begin{pmatrix}
			1 & -1\\
			-1 & 1
		\end{pmatrix}
		T_{i'}^{-1}
		\begin{pmatrix}
			\frac{\fraks_{i'}}{\sqrt{ \alpha_{i'} \gamma_{jN+i'}}} & 0 \\
			0 & 0
		\end{pmatrix}
		T_{i'}
		\begin{pmatrix}
			0 & 0 \\
			-1 & 1
		\end{pmatrix} 
		+
		\frac{1}{\sqrt{\gamma_{jN}}} \calE_j \\
		&\qquad=
		\frac{\fraks_{i'}}{2 \sqrt{ \alpha_{i'} \gamma_{jN+i'}}}
		\frac{([T_{i'}]_{21} + [T_{i'}]_{22}) [T_{i'}]_{12}}{\det T_{i'}}
		\begin{pmatrix}
			-1 & 1 \\
			1 & -1
		\end{pmatrix}
		+
		\frac{1}{\sqrt{\gamma_{jN}}} \calE_j.
	\end{align*}
	Analogously,
	\begin{align}
		\nonumber
		&
		\frac{1}{\ue^{-\vartheta_j} - \ue^{\vartheta_j}}
		\begin{pmatrix}
			\ue^{-\vartheta_j}-1 & 0 \\
			1-\ue^{\vartheta_j} & 0
		\end{pmatrix}
		T_{i'}^{-1}
		\begin{pmatrix}
			0 & \zeta_{jN+i'} \\
			0 & 0
		\end{pmatrix}
		T_{i'}
		\begin{pmatrix}
			1 & 1 \\
			1 & 1
		\end{pmatrix}\\
		\nonumber
		&\qquad=
		\frac{1}{2}
		\begin{pmatrix}
			1 & 0 \\
			1 & 0
		\end{pmatrix}
		T_{i'}^{-1}
		\begin{pmatrix}
			0 & \frac{\frakr_{i'}}{\sqrt{ \alpha_{i'} \gamma_{jN+i'}}} \\
			0 & 0
		\end{pmatrix}
		T_{i'}
		\begin{pmatrix}
			1 & 1 \\
			1 & 1
		\end{pmatrix} 
		+
		\frac{1}{\sqrt{\gamma_{jN}}} \calE_j \\
		\label{eq:24}
		&\qquad=
		\frac{\frakr_{i'}}{2 \sqrt{\alpha_{i'} \gamma_{jN+i'}}}
		\frac{([T_{i'}]_{21} + [T_{i'}]_{22}) [T_{i'}]_{22}}{\det T_{i'}}
		\begin{pmatrix}
			1 & 1 \\
			1 & 1
		\end{pmatrix}
		+
		\frac{1}{\sqrt{\gamma_{jN}}} \calE_j,
	\end{align}
	and
	\begin{align}
		\nonumber
		&
		\frac{1}{\ue^{-\vartheta_j} - \ue^{\vartheta_j}}
		\begin{pmatrix}
			1 & -1 \\
			-1 & 1
		\end{pmatrix}
		T_{i'}^{-1}
		\begin{pmatrix}
			0 & \zeta_{jN+i'} \\
			0 & 0
		\end{pmatrix}
		T_{i'}
		\begin{pmatrix}
			0 & 0 \\
			e^{\vartheta_j}-1 & e^{-\vartheta_j} - 1
		\end{pmatrix} \\
		\nonumber
		&\qquad=
		\frac{1}{2}
		\begin{pmatrix}
			1 & -1\\
			-1 & 1
		\end{pmatrix}
		T_{i'}^{-1}
		\begin{pmatrix}
			0 & \frac{\frakr_{i'}}{\sqrt{ \alpha_{i'} \gamma_{jN+i'}}} \\
			0 & 0
		\end{pmatrix}
		T_{i'}
		\begin{pmatrix}
			0 & 0 \\
			-1 & 1
		\end{pmatrix} 
		+
		\frac{1}{\sqrt{\gamma_{jN}}} \calE_j \\
		\label{eq:25}
		&\qquad=
		\frac{\frakr_{i'}}{2 \sqrt{ \alpha_{i'} \gamma_{jN+i'}}}
		\frac{([T_{i'}]_{21} + [T_{i'}]_{22}) [T_{i'}]_{22}}{\det T_{i'}}
		\begin{pmatrix}
			-1 & 1 \\
			1 & -1
		\end{pmatrix}
		+
		\frac{1}{\sqrt{\gamma_{jN}}} \calE_j.
	\end{align}
	In view of \eqref{eq:33}, \eqref{eq:34} and \eqref{eq:63}, we have
	\[
		\fraks_{i'} ([T_{i'}]_{11} + [T_{i'}]_{12}) = -\frakr_{i'} ([T_{i'}]_{21} + [T_{i'}]_{22}),
	\]
	thus by \eqref{eq:24} and \eqref{eq:25} we get
	\begin{align*}
		&
		\frac{1}{\ue^{-\vartheta_j} - \ue^{\vartheta_j}}
		\begin{pmatrix}
			\ue^{-\vartheta_j}-1 & 0 \\
			1-\ue^{\vartheta_j} & 0
		\end{pmatrix}
		T_{i'}^{-1}
		\begin{pmatrix}
			0 & \zeta_{jN+i'} \\
			0 & 0
		\end{pmatrix}
		T_{i'}
		\begin{pmatrix}
			1 & 1 \\
			1 & 1
		\end{pmatrix}\\ 
		&\qquad=
		\frac{\fraks_{i'}}{2 \sqrt{\alpha_{i'} \gamma_{jN+i'}}}
		\frac{([T_{i'}]_{11} + [T_{i'}]_{12}) [T_{i'}]_{22}}{\det T_{i'}}
		\begin{pmatrix}
			-1 & -1 \\
			-1 & -1
		\end{pmatrix}
		+
		\frac{1}{\sqrt{\gamma_{jN}}} \calE_j,
	\end{align*}
	and
	\begin{align*}
		&
		\frac{1}{\ue^{-\vartheta_j} - \ue^{\vartheta_j}}
		\begin{pmatrix}
			1 & -1 \\
			-1 & 1
		\end{pmatrix}
		T_{i'}^{-1}
		\begin{pmatrix}
			0 & \zeta_{jN+i'} \\
			0 & 0
		\end{pmatrix}
		T_{i'}
		\begin{pmatrix}
			0 & 0 \\
			e^{\vartheta_j}-1 & e^{-\vartheta_j} - 1
		\end{pmatrix} \\
		&\qquad=
		\frac{\fraks_{i'}}{2 \sqrt{ \alpha_{i'} \gamma_{jN+i'}}}
		\frac{([T_{i'}]_{11} + [T_{i'}]_{12}) [T_{i'}]_{22}}{\det T_{i'}}
		\begin{pmatrix}
			1 & -1 \\
			-1 & 1
		\end{pmatrix}
		+
		\frac{1}{\sqrt{\gamma_{jN}}} \calE_j.
	\end{align*}
	Hence,
	\begin{align*}
		&\frac{1}{\ue^{-\vartheta_j} - \ue^{\vartheta_j}}
		\begin{pmatrix}
			\ue^{-\vartheta_j}-1 & 0 \\
			1-\ue^{\vartheta_j} & 0
		\end{pmatrix}
		T_{i'}^{-1}
		\begin{pmatrix}
			\xi_{jN+i'} & \zeta_{jN+i'} \\
			0 & 0
		\end{pmatrix}
		T_{i'}
		\begin{pmatrix}
			1 & 1 \\
			1 & 1
		\end{pmatrix}  \\
		&\qquad+
		\frac{1}{\ue^{-\vartheta_j} - \ue^{\vartheta_j}}
		\begin{pmatrix}
			1 & -1 \\
			-1 & 1
		\end{pmatrix}
		T_{i'}^{-1}
		\begin{pmatrix}
			\xi_{jN+i'} & \zeta_{jN+i'} \\
			0 & 0
		\end{pmatrix}
		T_{i'}
		\begin{pmatrix}
			0 & 0 \\
			\ue^{\vartheta_j}-1 & \ue^{-\vartheta_j} -1
		\end{pmatrix}\\
		&=
		\frac{\fraks_{i'}}{2 \sqrt{\alpha_{i'} \gamma_{jN+i'}}} 
		\begin{pmatrix}
			1 & -1 \\
			-1 & 1
		\end{pmatrix}
		+
		\frac{1}{\sqrt{\gamma_{jN}}} \calE_j. 
	\end{align*}
	Summarizing, we obtain
	\begin{align*}
		&
		\begin{pmatrix}
			1 & 1 \\
			\ue^{\vartheta_j} & \ue^{-\vartheta_j}
		\end{pmatrix}^{-1}
		T_{i'}^{-1}
		\begin{pmatrix}
			\xi_{jN+i'} & \zeta_{jN+i'}+\frac{x \frakt}{\gamma_{jN+i'}} \\
			0 & 0
		\end{pmatrix}
		T_{i'}
		\begin{pmatrix}
			1 & 1 \\
			\ue^{\vartheta_j} & \ue^{-\vartheta_j}
		\end{pmatrix}  \\
		&\qquad=
		\frac{1}{2\vartheta_j}
		\frac{\frakt x \big(\varepsilon [\frakX_{i'}(0)]_{21} \big) - \fraku_{i'}}{\gamma_{jN+i'}}
		\begin{pmatrix}
			1 & 1\\
			-1 & -1
		\end{pmatrix}
		+
		\frac{\fraks_{i'}}{2 \sqrt{\alpha_{i'} \gamma_{jN+i'}}}
		\begin{pmatrix}
			1 & -1\\
			-1 & 1
		\end{pmatrix}
		+
		\frac{1}{\sqrt{\gamma_{jN}}} \calE_j.
	\end{align*}
	By \eqref{eq:17} and \eqref{eq:43}, we have
	\begin{align*}
		\sqrt{\frac{\gamma_{(j+1)N+i-1}}{\alpha_{i-1}}}
		\sum_{i' = i}^{N+i-1}
		\frac{\alpha_{i'}}{\alpha_{i'-1}} 
		\frac{x \frakt \big(\varepsilon [\frakX_{i'}(0)]_{21} \big)-\fraku_{i'}}{\gamma_{jN+i'}} 
		&=
		\sqrt{\frac{\alpha_{i-1}}{\gamma_{(j+1)N+i-1}}} \upsilon
		+
		\frac{1}{\sqrt{\gamma_{jN}}} \delta_j,
	\end{align*}
	and
	\[
		\sum_{i' = i}^{N+i-1}
		\frac{\alpha_i'}{\alpha_{i'-1}} \frac{\fraks_{i'}}{\sqrt{\alpha_{i'} \gamma_{jN+i'}}}
		=
		\sqrt{\frac{\alpha_{i-1}}{\gamma_{(j+1)N+i-1}}}
		\frakS + \frac{1}{\sqrt{\gamma_{jN}}} \delta_j.
	\]
	Therefore,
	\begin{align*}
		&
		\sum_{i' = i}^{N+i-1}
		\frac{\alpha_{i'}}{\alpha_{i'-1}}
		\begin{pmatrix}
			1 & 1 \\
			\ue^{\vartheta_j} & \ue^{-\vartheta_j}
		\end{pmatrix}^{-1}
		T_{i'}^{-1}
		\begin{pmatrix}
			\xi_{jN+i'} & \zeta_{jN+i'}-\frac{x \frakt}{\gamma_{jN+i'}} \\
			0 & 0
		\end{pmatrix}
		T_{i'}
		\begin{pmatrix}
			1 & 1 \\
			\ue^{\vartheta_j} & \ue^{-\vartheta_j}
		\end{pmatrix} \\
		&\qquad\qquad=
		\frac{\vartheta_j}{2} \frac{\upsilon}{\abs{\tau}}
		\begin{pmatrix}
			1 & 1 \\
			-1 & -1
		\end{pmatrix}
		+
		\frac{\sqrt{\alpha_{i-1}} \frakS}{2\sqrt{\gamma_{(j+1)N+i-1}}}
		\begin{pmatrix}
			1 & -1\\
			-1 & 1
		\end{pmatrix}
		+
		\frac{1}{\sqrt{\gamma_{jN}}} \calE_j.
	\end{align*}
	Finally, by \eqref{eq:36} we get
	\begin{align*}
		Z_{j+1}^{-1} X_{jN+i} Z_j
		=
		\varepsilon
		\left\{
		\Id
		+
		\sqrt{\frac{\alpha_{i-1}}{\gamma_{(j+1)N+i-1}}}
		\calR_i
		+
		\frac{1}{\sqrt{\gamma_{jN}}} \calE_j
		\right\} 
	\end{align*}
	where
	\[
		\calR_i(x) = \frac{\sqrt{\abs{\tau(x)}}}{2}
		\begin{pmatrix}
			1 & -1\\
			1 & -1
		\end{pmatrix}
		-
		\frac{\upsilon(x)}{2 \sqrt{\abs{\tau(x)}}}
		\begin{pmatrix}
			1 & 1\\
			-1 & -1
		\end{pmatrix}
		-
		\frac{\frakS}{2}
		\begin{pmatrix}
			1 & -1\\
			-1 & 1
		\end{pmatrix}
	\]
	which finishes the proof.
\end{proof}

\begin{corollary}
	\label{cor:1}
	Suppose that the hypotheses of Theorem \ref{thm:1} are satisfied. Then
	\begin{align*}
		\lim_{j \to \infty} \gamma_{(j+1)N+i-1} \discr\big(X_{jN+i} \big) 
		= 4 \tau \alpha_{i-1}
	\end{align*}
	locally uniformly on $\Lambda$ where $\tau$ is defined in \eqref{eq:32}.
\end{corollary}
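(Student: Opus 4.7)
The plan is to reduce the discriminant of $X_{jN+i}$ to the discriminant of a matrix that, after an appropriate rescaling, converges to $\sqrt{\alpha_{i-1}}\,\calR_i$, and then appeal to the identity $\discr \calR_i = 4\tau$ proved in Theorem~\ref{thm:1}. To this end, recall two elementary identities for any $A \in \Mat(2,\CC)$ and any scalar $c$: the discriminant is invariant under translation by a multiple of the identity, that is $\discr(c\,\Id + A) = \discr(A)$ (since the eigenvalues of $c\,\Id + A$ differ from those of $A$ by the same constant $c$), and $\discr(cA) = c^2\discr(A)$. Finally, $\discr$ is invariant under similarity.

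First, I would observe that, writing $M_j = Z_{j+1}^{-1} X_{jN+i} Z_j$ from Theorem~\ref{thm:1} and $N_j = Z_j^{-1} Z_{j+1}$ from Theorem~\ref{thm:2}, the product $M_j N_j = Z_{j+1}^{-1} X_{jN+i} Z_{j+1}$ is similar to $X_{jN+i}$, hence
\[
	\discr\bigl(X_{jN+i}\bigr) = \discr(M_j N_j).
\]
Substituting the formulas supplied by Theorems~\ref{thm:1} and~\ref{thm:2}, and setting $h_j = \sqrt{\alpha_{i-1}/\gamma_{(j+1)N+i-1}}$,
\[
	M_j N_j = \varepsilon \bigl(\Id + h_j R_j\bigr)\bigl(\Id + h_j Q_j\bigr) = \varepsilon\,\Id + \varepsilon P_j,
	\qquad P_j := h_j(R_j + Q_j) + h_j^2 R_j Q_j.
\]
Applying the two identities above yields $\discr(M_j N_j) = \discr(\varepsilon P_j) = \discr(P_j)$.

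Next, using $\discr(cA) = c^2 \discr(A)$ with $c = \sqrt{\gamma_{(j+1)N+i-1}}$, I would write
\[
	\gamma_{(j+1)N+i-1}\,\discr\bigl(X_{jN+i}\bigr) = \discr\bigl(\widetilde{P}_j\bigr),
	\qquad
	\widetilde{P}_j := \sqrt{\alpha_{i-1}}\,(R_j + Q_j) + \frac{\alpha_{i-1}}{\sqrt{\gamma_{(j+1)N+i-1}}} R_j Q_j.
\]
By Theorem~\ref{thm:1} the sequence $R_j$ converges to $\calR_i$ uniformly on any compact $K \subset \Lambda$, by Theorem~\ref{thm:2} $Q_j$ converges uniformly to the zero matrix, and $\gamma_n \to \infty$ by \ref{eq:1a} together with \eqref{eq:20d}. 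Hence $\widetilde{P}_j \to \sqrt{\alpha_{i-1}}\,\calR_i$ locally uniformly on $\Lambda$.

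Finally, since $\discr$ is a continuous (in fact polynomial) function of the matrix entries, we conclude
\[
	\lim_{j \to \infty} \gamma_{(j+1)N+i-1}\,\discr\bigl(X_{jN+i}\bigr) = \discr\bigl(\sqrt{\alpha_{i-1}}\,\calR_i\bigr) = \alpha_{i-1}\,\discr\calR_i = 4\tau\,\alpha_{i-1},
\]
locally uniformly on $\Lambda$, where the last equality uses the identity $\discr\calR_i = 4\tau$ from Theorem~\ref{thm:1}. No genuine obstacle arises: the bookkeeping of the identity $\discr(c\,\Id + A) = \discr(A)$, together with the homogeneity of $\discr$, absorbs all the non-conjugation factors, while the local uniform convergence is inherited directly from the two theorems invoked.
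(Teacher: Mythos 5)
Your proposal is correct and follows essentially the same route as the paper: both conjugate $X_{jN+i}$ by $Z$ to obtain $\varepsilon(\Id+h_jR_j)(\Id+h_jQ_j)$ (the paper uses $Z_j^{-1}X_{jN+i}Z_j$ with the factors in the opposite order, which is immaterial), then use translation-invariance and homogeneity of the discriminant to reduce to $\discr(R_j+Q_j+h_jR_jQ_j)\to\discr\calR_i=4\tau$. No substantive difference.
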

\begin{proof}
	We write
	\[
		Z_j^{-1} X_{jN+i} Z_j = \big( Z_j^{-1} Z_{j+1} \big) \big( Z_{j+1}^{-1} X_{jN+i} Z_j \big),
	\]
	thus by Theorems \ref{thm:2} and \ref{thm:1}, we obtain
	\begin{align*}
		\varepsilon Z_j^{-1} X_{jN+i} Z_j 
		&= 
		\bigg( \Id + \sqrt{\frac{\alpha_{i-1}}{\gamma_{(j+1)N+i-1}}} Q_j \bigg) 
		\bigg( \Id + \sqrt{\frac{\alpha_{i-1}}{\gamma_{(j+1)N+i-1}}} R_j \bigg)  \\
		&= 
		\Id + \sqrt{\frac{\alpha_{i-1}}{\gamma_{(j+1)N+i-1}}} (Q_j + R_j) + \frac{\alpha_{i-1}}{\gamma_{(j+1)N+i-1}} 
		Q_j R_j.
	\end{align*}
	Hence,
	\[
		\frac{\gamma_{(j+1)N+i-1}}{\alpha_{i-1}} 
		\discr (X_{jN+i}) = \discr \bigg( R_j + Q_j + \sqrt{\frac{\alpha_{i-1}}{\gamma_{(j+1)N+i-1}}} Q_j R_j \bigg),
	\]
	and consequently,
	\[
		\lim_{j \to \infty} 
		\frac{\gamma_{(j+1)N+i-1}}{\alpha_{i-1}}
		\discr (X_{jN+i}) 
		= \discr (\calR_i).
	\]
	Since $\discr \calR_i = 4 \tau$, the conclusion follows.
\end{proof}

\section{Essential spectrum} \label{sec:4}
In this section we want to understand spectral properties of Jacobi operators corresponding to tempered
$N$-periodically modulated sequences. We set
\[
	\Lambda_- = \tau^{-1}\big((-\infty, 0)\big),
	\qquad\text{and}\qquad
	\Lambda_+ = \tau^{-1}\big((0, +\infty)\big).
\]
Observe that, if $\frakt = 0$ then at least one of the sets $\Lambda_-$ or $\Lambda_+$ is empty. Similar to the proof of 
\cite[Theorem 4.1]{jordan} we use the shifted conjugation (Theorems \ref{thm:2} and \ref{thm:1})
together with a variant of a Levison's theorem for discrete systems developed in \cite[Theorem 4.4]{Discrete}.

\begin{theorem}
	\label{thm:3}
	Let $N$ be a positive integer. Let $(\gamma_n : n \in \NN)$ be a sequence of positive
    numbers tending to infinity and satisfying \eqref{eq:87} and \eqref{eq:89}. Let $(a_n : n \in \NN_0)$ and
	$(b_n : n \in \NN_0)$ be $\gamma$-tempered $N$-periodically modulated Jacobi parameters such that $\frakX_0(0)$ is
	a non-trivial parabolic element. Suppose that \eqref{eq:88} holds true with $\varepsilon = \sign{\tr \frakX_0(0)}$.
	If the operator $A$ is self-adjoint then
	\[
		\sigmaEss(A) \cap \Lambda_+ = \emptyset.
	\]
\end{theorem}
\begin{proof}
	Let us fix a compact interval $K \subset \Lambda_+$ and $i \in \{0, 1, \ldots, N-1\}$. We set
	\[
		Y_j = Z_{j+1}^{-1} X_{jN+i} Z_j
	\]
	where $Z_j$ is the matrix defined in \eqref{eq:3}. By Theorem \ref{thm:1}, we have
	\begin{equation}
		\label{eq:37}
		Y_j = \varepsilon\bigg(\Id + \sqrt{\frac{\alpha_{i-1}}{\gamma_{(j+1)N+i-1}}} R_j\bigg)
	\end{equation}
	where $(R_j : j \in \NN)$ is a sequence from $\calD_1\big(K, \Mat(2, \RR) \big)$ convergent uniformly on $K$
	to the matrix $\calR_i$ given by the formula \eqref{eq:26}. By Corollary \ref{cor:1}, there are $j_0 \geq 0$ and 
	$\delta > 0$, so that for all $j \geq j_0$ and $x \in K$,
	\begin{equation}
		\label{eq:38}
		\discr R_j(x) \geq \delta.
	\end{equation}
	In particular, the matrix $R_j(x)$ has two eigenvalues
	\[
		\xi_j^+(x) = \frac{\tr R_j(x) + \sqrt{\discr R_j(x)}}{2},
		\quad\text{and}\quad
		\xi_j^-(x) = \frac{\tr R_j(x) - \sqrt{\discr R_j(x)}}{2}.
	\]
	In view of \eqref{eq:37}, the matrix $Y_j(x)$ has eigenvalues
	\[
		\lambda_j^+ = \varepsilon\bigg(1 + \sqrt{\frac{\alpha_{i-1}}{\gamma_{(j+1)N+i-1}}} \xi_j^+\bigg)
		\quad\text{and}\quad
		\lambda_j^- = \varepsilon\bigg(1 + \sqrt{\frac{\alpha_{i-1}}{\gamma_{(j+1)N+i-1}}} \xi_j^-\bigg).
	\]
	By possible increasing $j_0$ we can guarantee that
	\begin{equation}
		\label{eq:71}
		|\lambda_j^-(x)| \leq |\lambda_j^+(x)|
	\end{equation}
	for all $j \geq j_0$ and $x \in K$.

	Now, by the Stolz--Ces\`{a}ro theorem (see e.g. \cite[Section 3.1.7]{Muresan2009}), \eqref{eq:89} implies that
	\begin{equation}
		\label{eq:62}
		\lim_{j \to \infty} \frac{\sqrt{\gamma_{jN+i}}}{j}
		=
		\lim_{j \to \infty} \big(\sqrt{\gamma_{(j+1)N+i}} - \sqrt{\gamma_{jN+i}}\big) = 0,
	\end{equation}
	and so
	\[
		\sum_{j = 0}^\infty \frac{1}{\sqrt{\gamma_{jN+i}}} = \infty.
	\]
	Therefore, by \eqref{eq:38}, we can apply \cite[Theorem 4.4]{Discrete} to the system
	\begin{equation}
		\label{eq:60}
		\Psi_{j+1} = Y_j \Psi_j.
	\end{equation}
	Consequently, there are $(\Psi_j^- : j \geq j_0)$ and $(\Psi_j^+ : j \geq j_0)$ such that
	\begin{equation}
		\label{eq:16}
		\lim_{j \to \infty} \sup_{x \in K}{
		\bigg\|\frac{\Psi_j^+(x)}{\prod_{k = j_0}^{j-1} \lambda^+_k(x)} - v^+(x) \bigg\|} 
		=
		\lim_{j \to \infty} \sup_{x \in K}{
		\bigg\|\frac{\Psi_j^-(x)}{\prod_{k = j_0}^{j-1} \lambda^-_k(x)} - v^-(x) \bigg\|} = 0
	\end{equation}
	where $v^-(x)$ and $v^+(x)$ are continuous eigenvectors of $\calR_i(x)$ corresponding to
	\[
		\xi^+(x) = \frac{\tr \calR_i(x) + \sqrt{\discr \calR_i(x)}}{2},
		\quad\text{and}\quad
		\xi^-(x) = \frac{\tr \calR_i(x) - \sqrt{\discr \calR_i(x)}}{2}.
	\]
	Since $\tau(x) > 0$, by means of \eqref{eq:26} one can verify that
	\begin{equation}
		\label{eq:61}
		v_1^+(x)+v_2^+(x) \neq 0 \quad\text{and}\quad
		v_1^-(x)+v_2^-(x) \neq 0.
	\end{equation}
	Indeed, otherwise $e_1 - e_2$ would be an eigenvector of $\calR_i(x)$, but
	\begin{align*}
		\calR_i(x) (e_1 - e_2) 
		&= \big(\sqrt{\abs{\tau(x)}} - \frakS\big)e_1 + \big(\sqrt{\abs{\tau(x)}} + \frakS\big) e_2 \\
		&= \big(\sqrt{\abs{\tau(x)}} - \frakS\big)(e_1-e_2) + 2 \sqrt{\abs{\tau(x)}} e_2,
	\end{align*}
	thus $\tau(x) = 0$, which is impossible. 

	Now, by \eqref{eq:60} the sequences $\Phi_j^\pm = Z_j \Psi_j^\pm$ satisfy
	\[
		\Phi_{j+1} = X_{jN+i} \Phi_j, \qquad j \geq j_0.
	\]
	We set
	\[
		\phi_1^\pm = B_1^{-1} \cdots B_{j_0}^{-1} \Phi_{j_0}^\pm
	\]
	and
	\[
		\phi_{n+1}^\pm = B_n \phi_n^\pm, \qquad n > 1.
	\]
	Then for $jN+i' > j_0N+i$ with $i' \in \{0, 1, \ldots, N-1\}$, we get
	\[
		\phi_{jN+i'}^\pm =
		\begin{cases}
			B_{jN+i'}^{-1} B_{jN+i'+1}^{-1} \cdots B_{jN+i-1}^{-1} \Phi_j^\pm
			&\text{if } i' \in \{0, 1, \ldots, i-1\}, \\
			\Phi_j^\pm & \text{if } i ' = i,\\
			B_{jN+i'-1} B_{jN+i'-2} \cdots B_{jN+i} \Phi_j^\pm &
			\text{if } i' \in \{i+1, \ldots, N-1\}.
		\end{cases}
	\]
	Since for $i' \in \{0, 1, \ldots, i-1\}$,
	\[
		\lim_{j \to \infty} B_{jN+i'}^{-1} B_{jN+i'+1}^{-1} \cdots B_{jN+i-1}^{-1}
		=
		\frakB_{i'}^{-1}(0) \frakB_{i'+1}^{-1}(0) \cdots \frakB_{i-1}^{-1}(0),
	\]
	and
	\[
		\lim_{j \to \infty} Z_j v^\pm = (v_1^\pm+v_2^\pm) T_i (e_1 + e_2),
	\]
	we obtain
	\begin{equation}
		\label{eq:40}
		\lim_{j \to \infty}
		\sup_K{
		\bigg\|
		\frac{\phi^\pm_{jN+i'}}{\prod_{k = j_0}^{j-1} \lambda^\pm_k} - 
		(v_1^\pm+v_2^\pm) T_{i'} (e_1 + e_2)
		\bigg\|
		} =
		0.
	\end{equation}
	Analogously, we can show that \eqref{eq:40} holds true also for $i' \in \{i+1, \ldots, N-1\}$.

	Since $(\phi_n^\pm : j \in \NN)$ satisfies \eqref{eq:87}, the sequence $(u_n^\pm(x) : n \in \NN_0)$ defined as
	\[
		u_n^\pm(x) = 
		\begin{cases}
			\langle \phi_1^\pm(x), e_1 \rangle & \text{if } n = 0, \\
			\langle \phi_n^\pm(x), e_2 \rangle & \text{if } n \geq 1,
		\end{cases}
	\]
	is a generalized eigenvector associated to $x \in K$, provided that $(u_0^\pm, u_1^\pm) \neq 0$ on $K$. Suppose on the
	contrary that there is $x \in K$ such that $\phi_1^\pm(x) = 0$. Hence, $\phi_n^\pm(x) = 0$ for all $n \in \NN$, thus
	by \eqref{eq:61} and \eqref{eq:40} we must have $T_0(e_1 + e_2) = 0$ which is impossible since $T_0$ is invertible.
	
	Consequently, $(u_n^+(x) : n \in \NN_0)$ and $(u_n^-(x) : n \in \NN_0)$ are two generalized eigenvectors associated with
	$x \in K$ with different asymptotic behavior, thus they are linearly independent.

	Now, let us suppose that $A$ is self-adjoint. By the proof of \cite[Theorem 5.3]{Silva2007}, if
	\begin{equation}
		\label{eq:58}
		\sum_{n = 0}^\infty \sup_{x \in K}{\big|u^-_n(x)\big|^2} < \infty
	\end{equation}
	then $K \cap \sigmaEss(A) = \emptyset$, and since $K$ is any compact subset of $\Lambda_+$ this implies that
	$\sigmaEss(A) \cap \Lambda_+ = \emptyset$. Hence, it is enough to show \eqref{eq:58}. Let us observe that 
	by \eqref{eq:40}, for each $i' \in \{0, 1, \ldots, N-1\}$, $j > j_0$, and $x \in K$,
	\begin{equation}
		\label{eq:41}
		|u_{jN+i'}^-(x)|
		\leq
		c 
		\prod_{k = j_0}^{j-1} |\lambda_k^-(x)|.
	\end{equation}
	Since $(R_j : j \in \NN)$ converges to $\calR_i$ uniformly on $K$, and
	\[
		\lim_{n \to \infty} \gamma_n = \infty,
	\]
	there is $j_1 \geq j_0$, such that for $j > j_1$,
	\[
		\sqrt{\frac{\alpha_{i-1}}{\gamma_{(j+1)N+i-1}}} 
		\Big( |\tr R_j(x)| + \sqrt{\discr R_j(x)}\Big)\leq 1.
	\]
	Therefore, for $j \geq j_1$,
	\[
		|\lambda_j^-(x)|
		=
		1 +
		\sqrt{\frac{\alpha_{i-1}}{\gamma_{(j+1)N+i-1}}} 
		\frac{ \tr R_j(x) - \sqrt{\discr R_j(x)} }{2}.
	\]
	By \eqref{eq:26} and Proposition \ref{prop:1}, $\tr \calR_i = -\frakS \leq 0$, thus \eqref{eq:62} implies that
	\[
		\lim_{j \to \infty}
		j \sqrt{\frac{\alpha_{i-1}}{\gamma_{(j+1)N+i-1}}} \frac{\tr R_j(x) - \sqrt{\discr R_j(x)}} {2} 
		=
		-\infty.
	\]
	In particular, there is $j_2 \geq j_1$ such that for all $j > j_2$,
	\[
		\sup_{x \in K} {|\lambda_j^-(x)|} \leq 1 - \frac{1}{j}.
	\]
	Consequently, by \eqref{eq:41}, there is $c' > 0$ such that for all $i' \in \{0, 1, \ldots, N-1\}$ and
	$j > j_2$,
	\[
		\sup_{x \in K}{|u_{jN+i'}^-(x)|} 
		\leq c \prod_{k = j_2}^{j-1} \bigg(1 - \frac{1}{k} \bigg) \leq \frac{c'}{j},
	\]
	which leads to \eqref{eq:58} and the theorem follows.
\end{proof}
\begin{remark}
In Section~\ref{sec:9} we characterize when $A$ is self-adjoint. In particular, Theorem~\ref{thm:6} settles the problem when $\Lambda_- \neq \emptyset$. If $\Lambda_- = \emptyset$ but $\Lambda_+ \neq \emptyset$, the formula \eqref{eq:72} is a necessary and sufficient condition for self-adjointness of $A$.
\end{remark}

\section{Uniform diagonalization}
\label{sec:5}
Fix a positive integer $N$ and $i \in \{0, 1, \ldots, N-1\}$. Let $(\gamma_n : n \in \NN)$ be a sequence of positive
numbers tending to infinity and satisfying \eqref{eq:87} and \eqref{eq:89}. Let $(a_n : n \in \NN_0)$ and
$(b_n : n \in \NN_0)$ be $\gamma$-tempered $N$-periodically modulated Jacobi parameters such that $\frakX_0(0)$ is
non-diagonalizable and let $\varepsilon = \sign{\tr \frakX_0(0)}$. Suppose that \eqref{eq:88} holds true.
Assume that $\Lambda_- \neq \emptyset$. Let us consider a compact interval in $\Lambda_-$ and a generalized eigenvector 
$(u_n : n \in \NN_0)$ associated to $x \in K$ and corresponding to $\eta \in \sS^1$. We set
\begin{equation}
	\label{eq:46}
	Y_j = Z_{j+1}^{-1} X_{jN+i} Z_j
\end{equation}
and
\begin{equation} \label{eq:46a}
	\vec{v}_j(\eta, x) = Z_j^{-1}(x) \vec{u}_{jN+i}(\eta, x)
\end{equation}
where $Z_j$ is defined in \eqref{eq:3}. In view of Theorem \ref{thm:1}, we have
\[
	Y_j = \varepsilon\bigg(\Id + \sqrt{\frac{\alpha_{i-1}}{\gamma_{(j+1)N+i-1}}} R_j \bigg)
\]
where $(R_j)$ is a sequence from $\calD_1\big(K, \Mat(2, \RR)\big)$ convergent to $\calR_i$ given by \eqref{eq:26}.
Since $\discr \calR_i < 0$ on $K$
\[
	|[\calR_i(x)]_{12}| > 0
\]
and there are $\delta > 0$ and $j_0 \geq 1$ such that for all $j \geq j_0$ and $x \in K$,
\[
	\discr R_j(x) < -\delta
	\qquad\text{and}\qquad
	|[R_j(x)]_{12}| > \delta.
\]
Therefore, $R_j(x)$ has two eigenvalues $\xi_j(x)$ and $\overline{\xi_j(x)}$ where
\begin{equation}
	\label{eq:45}
	\xi_j(x) = \frac{\tr R_j(x) + i \varepsilon \sqrt{-\discr R_j(x)}}{2}.
\end{equation}
Moreover,
\[
	R_j = C_j 
	\begin{pmatrix}
		\xi_j & 0 \\
		0 & \overline{\xi_j}
	\end{pmatrix}
	C_j^{-1}
\]
where
\[
	C_j = 
	\begin{pmatrix}
		1 & 1 \\
		\frac{\xi_j - [R_j]_{11}}{[R_j]_{12}} & \frac{\overline{\xi_j} - [R_j]_{11}}{[R_j]_{12}}
	\end{pmatrix}.
\]
Using \eqref{eq:46}, $Y_j(x)$ has two eigenvalues $\lambda_j(x)$ and $\overline{\lambda_j(x)}$ where
\[
	\lambda_j(x) = \varepsilon\bigg(1  + \sqrt{\frac{\alpha_{i-1}}{\gamma_{(j+1)N+i-1}}} \xi_j(x) \bigg).
\]
Moreover,
\begin{equation}
	\label{eq:48}
	Y_j = C_j D_j C_j^{-1}
\end{equation}
where
\begin{equation}
	\label{eq:47}
	D_j = 
	\begin{pmatrix}
		\lambda_j & 0 \\
		0 & \overline{\lambda_j}
	\end{pmatrix}.
\end{equation}
Theorem \ref{thm:1} implies that $(C_j : j \geq j_0)$ and $(D_j : j \geq j_0)$ belong to
$\calD_1\big(K, \Mat(2, \CC)\big)$. By \eqref{eq:26}, there is a mapping $C_\infty: K \rightarrow \GL(2, \CC)$ 
such that
\[
	\lim_{j \to \infty} C_j = C_\infty
\]
uniformly on $K$.
\begin{claim}
	\label{clm:1}
	There is $c > 0$ such that for all $j \geq L > j_0$,
	\[
		\|\vec{v}_j \| \leq c \bigg(\prod_{k = L}^{j-1} \big\| D_k \big\|\bigg) \|\vec{v}_{L} \|
	\]
	uniformly on $\sS^1 \times K$.
\end{claim}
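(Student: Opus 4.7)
The plan is to diagonalize the recursion, normalize by the modulus of the eigenvalues, and then apply a discrete Gronwall argument on a mild $\ell^1$-summable perturbation.

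First I would exploit the factorization $Y_j = C_j D_j C_j^{-1}$ from \eqref{eq:48}. Setting $\vec{w}_j = C_j^{-1} \vec{v}_j$ and using that $\vec{v}_{j+1} = Y_j \vec{v}_j$, a direct computation yields
\[
    \vec{w}_{j+1} = E_j D_j \vec{w}_j,
    \qquad\text{where } E_j := C_{j+1}^{-1} C_j.
\]
Since $D_j = \diag(\lambda_j, \overline{\lambda_j})$ has $\|D_j\| = |\lambda_j|$, I next introduce the normalization
\[
    \tilde{w}_j = \frac{\vec{w}_j}{\prod_{k=L}^{j-1} |\lambda_k|}, \qquad j \geq L,
\]
so that $\tilde{w}_{j+1} = |\lambda_j|^{-1} E_j D_j \tilde{w}_j$. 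The matrix $|\lambda_j|^{-1} D_j$ is unitary (in the standard basis), hence of operator norm $1$, giving
\[
    \| \tilde{w}_{j+1} \| \leq \| E_j \| \cdot \| \tilde{w}_j \| \leq \bigl( 1 + \| E_j - \Id \| \bigr) \| \tilde{w}_j \|.
\]

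The key input is that $(C_j)$ belongs to $\calD_1\bigl(K,\Mat(2,\CC)\bigr)$ and converges uniformly on $K$ to an invertible limit $C_\infty$. Consequently $(C_j^{-1})$ is also in $\calD_1\bigl(K,\Mat(2,\CC)\bigr)$ and uniformly bounded (together with its inverses) on $K$. This yields a uniform bound
\[
    \sum_{k \geq j_0} \sup_{x \in K} \| E_k(x) - \Id \| < \infty,
\]
and therefore $\prod_{k=L}^{j-1} \bigl( 1 + \sup_K \| E_k - \Id \| \bigr)$ is uniformly bounded in $j \geq L > j_0$. Iterating the previous estimate gives $\sup_K \|\tilde{w}_j\| \leq c \sup_K \|\tilde{w}_L\|$ for some constant $c > 0$ independent of $L$ and $j$.

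Unwinding the normalization yields $\|\vec{w}_j\| \leq c \bigl( \prod_{k=L}^{j-1} |\lambda_k| \bigr) \|\vec{w}_L\|$, and transferring back to $\vec{v}_j = C_j \vec{w}_j$ via uniform boundedness of $\|C_j\|$ and $\|C_L^{-1}\|$, together with $\|D_k\| = |\lambda_k|$, gives the claimed inequality. The only real obstacle is checking that the uniform summability of $\|E_k - \Id\|$ on $K$ genuinely follows from $(C_k)$ lying in the Stolz class together with uniform invertibility of the limit; this is standard once one writes $E_k - \Id = C_{k+1}^{-1}(C_k - C_{k+1})$ and uses the uniform bound on $\|C_{k+1}^{-1}\|$.
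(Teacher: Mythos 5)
Your proof is correct, and it follows the same basic route as the paper: both arguments telescope the product $Y_{j-1}\cdots Y_L$ through the diagonalization $Y_k=C_kD_kC_k^{-1}$, reducing everything to a bound on $\prod_{k}\bigl(D_kC_k^{-1}C_{k-1}\bigr)$ (in the paper) or, equivalently, on the conjugated recursion $\vec{w}_{j+1}=C_{j+1}^{-1}C_j\,D_j\vec{w}_j$ (in your version). The one genuine difference is how that product is controlled: the paper simply cites \cite[Proposition 1]{SwiderskiTrojan2019}, whereas you prove the needed estimate from scratch by observing that $|\lambda_k|^{-1}D_k$ is unitary and that $\sum_k\sup_K\|C_{k+1}^{-1}C_k-\Id\|<\infty$ follows from $(C_k)\in\calD_1\bigl(K,\Mat(2,\CC)\bigr)$ together with the uniform bound on $\|C_{k+1}^{-1}\|$ (which holds for $k\ge j_0$ since $\det C_k=\bigl(\overline{\xi_k}-\xi_k\bigr)/[R_k]_{12}$ is bounded away from zero on $K$ by the choice of $j_0$). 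Your discrete Gronwall step $\prod_k(1+\sup_K\|E_k-\Id\|)\le\exp\bigl(\sum_k\sup_K\|E_k-\Id\|\bigr)$ then gives a constant independent of $L$ and $j$, and the uniformity in $\eta\in\sS^1$ is automatic because the constant depends only on the matrices. So your write-up is a correct, self-contained substitute for the external proposition the authors invoke.
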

For the proof, we write
\[
	\vec{v}_j = Y_{j-1} \cdots Y_{j_0} \vec{v}_{L}.
\]
thus
\[
	\|\vec{v}_j\| \leq \big\| Y_{j-1} \cdots Y_{L} \big\| \|\vec{v}_{L}\|.
\]
Next,
\[
	Y_{j-1} \cdots Y_{j_0} = C_{j-1} \big(D_{j-1} C_{j-1}^{-1} C_{j-2}\big)
	\big(D_{j-2} C_{j-2}^{-1} C_{j-3}\big) \cdots
	\big(D_{L} C_{L}^{-1} C_{L-1}\big) C_{L-1}^{-1}
\]
and so
\begin{align*}
	\big\| Y_{j-1} \cdots Y_{L} \big\|
	&\leq 
	\big\|C_{j-1} \big\| 
	\Big\|
	\big(D_{j-1} C_{j-1}^{-1} C_{j-2}\big)
	\big(D_{j-2} C_{j-2}^{-1} C_{j-3}\big)
	\cdots
	\big(D_{L} C_{L}^{-1} C_{L-1}\big)
	\Big\|
	\big\|C_{L-1}^{-1}\big\| \\
	&\leq
	c \prod_{k = L}^{j-1} \|D_k\|
\end{align*}
where the last estimate follows by \cite[Proposition 1]{SwiderskiTrojan2019}, proving Claim \ref{clm:1}.

Next, we show the following statement.
\begin{claim}
	\label{clm:2}
	We have
	\[
		\lim_{j \to \infty} 
		\frac{a_{jN+i-1}}{\sqrt{\gamma_{jN+i-1}}} 
		\prod_{k=j_0}^{j-1} |\lambda_k|^2 =
		\frac{a_{j_0N+i-1} \sinh \vartheta_{j_0}}{\sqrt{\alpha_{i-1} |\tau|}} > 0
	\]
	uniformly on $K \subset \Lambda_-$.
\end{claim}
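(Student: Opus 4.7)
The plan is to exploit the fact that $\lambda_j$ and $\overline{\lambda_j}$ are the two eigenvalues of $Y_j$, so $|\lambda_j|^2 = \det Y_j$, and then evaluate $\det Y_j$ explicitly via the identity $Y_j = Z_{j+1}^{-1} X_{jN+i} Z_j$. This reduces the whole product to a telescoping one, after which the claim becomes a short asymptotic computation.

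Concretely, first observe that $\det B_n = a_{n-1}/a_n$, so
\[
	\det X_{jN+i} = \prod_{\ell=0}^{N-1} \frac{a_{jN+i+\ell-1}}{a_{jN+i+\ell}} = \frac{a_{jN+i-1}}{a_{(j+1)N+i-1}}.
\]
From the definition \eqref{eq:3}, $\det Z_j = -2 (\det T_i) \sinh \vartheta_j$, hence
\[
	|\lambda_j|^2 = \det Y_j = \frac{\det Z_j}{\det Z_{j+1}} \det X_{jN+i} = \frac{\sinh \vartheta_j}{\sinh \vartheta_{j+1}} \cdot \frac{a_{jN+i-1}}{a_{(j+1)N+i-1}}.
\]
Telescoping the product from $k = j_0$ to $j-1$ yields
\[
	\prod_{k = j_0}^{j-1} |\lambda_k|^2 = \frac{\sinh \vartheta_{j_0}}{\sinh \vartheta_j} \cdot \frac{a_{j_0 N+i-1}}{a_{jN+i-1}},
\]
so that
\[
	\frac{a_{jN+i-1}}{\sqrt{\gamma_{jN+i-1}}} \prod_{k = j_0}^{j-1} |\lambda_k|^2 = \frac{a_{j_0 N+i-1} \sinh \vartheta_{j_0}(x)}{\sqrt{\gamma_{jN+i-1}} \, \sinh \vartheta_j(x)}.
\]

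It remains to take the limit as $j \to \infty$. Since $\gamma_n \to \infty$ the argument $\vartheta_j(x) = \sqrt{\alpha_{i-1} |\tau(x)|/\gamma_{(j+1)N+i-1}}$ tends to zero uniformly on $K$ (because $\tau$ is continuous on the compact set $K$), so $\sinh \vartheta_j / \vartheta_j \to 1$ uniformly on $K$. Thus
\[
	\sqrt{\gamma_{jN+i-1}} \, \sinh \vartheta_j(x) \sim \sqrt{\alpha_{i-1} |\tau(x)|} \sqrt{\frac{\gamma_{jN+i-1}}{\gamma_{(j+1)N+i-1}}}.
\]
Finally, the hypothesis $\sqrt{\gamma_{n+N}} - \sqrt{\gamma_n} \to 0$ together with $\gamma_n \to \infty$ gives $\gamma_{jN+i-1}/\gamma_{(j+1)N+i-1} \to 1$, yielding the asserted limit $a_{j_0 N+i-1} \sinh \vartheta_{j_0}(x)/\sqrt{\alpha_{i-1} |\tau(x)|}$. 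Positivity and continuity on $K \subset \Lambda_-$ follow because $\tau(x) < 0$ there.

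I do not expect a serious obstacle here: once one recognises $|\lambda_j|^2$ as a determinant, the entire identity telescopes and the only analytic input is the control of $\sinh \vartheta_j$ near zero and the regularity hypothesis on $\sqrt{\gamma_n}$. Uniformity on $K$ is automatic because all approximations depend only on $|\tau(x)|$, a continuous function on the compact set $K$. The mild subtlety worth flagging in the write-up is that the telescoping step relies on $\det X_{jN+i}$ being the ratio $a_{jN+i-1}/a_{(j+1)N+i-1}$ rather than $1$; this is where the non-periodic corrections to $(a_n)$ enter and precisely cancel the prefactor $a_{jN+i-1}/\sqrt{\gamma_{jN+i-1}}$ to leave a finite, positive limit.
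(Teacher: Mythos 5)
Your proposal is correct and follows essentially the same route as the paper: identify $|\lambda_k|^2$ with $\det Y_k = \frac{\det Z_k}{\det Z_{k+1}}\det X_{kN+i} = \frac{\sinh\vartheta_k}{\sinh\vartheta_{k+1}}\cdot\frac{a_{kN+i-1}}{a_{(k+1)N+i-1}}$, telescope, and conclude via $\sinh\vartheta_j/\vartheta_j \to 1$. You merely spell out the final limit (the step $\gamma_{jN+i-1}/\gamma_{(j+1)N+i-1}\to 1$) a bit more explicitly than the paper does.
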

By \eqref{eq:48} and \eqref{eq:47},
\[
	|\lambda_k(x)|^2 = \det D_k(x) = \det Y_j(x),
\]
which together with \eqref{eq:46} gives
\[
	|\lambda_k(x)|^2 = \frac{\sinh \vartheta_k(x)}{\sinh \vartheta_{k+1}(x)} \cdot 
	\frac{a_{jN+i-1}}{a_{(j+1)N+i-1}}.
\]
Hence,
\[
	\prod_{k=j_0}^{j-1} |\lambda_k(x)|^2
	=
	\frac{\sinh \vartheta_{j_0}(x)}{\sinh \vartheta_j(x)} \cdot
	\frac{a_{j_0N+i-1}}{a_{jN+i-1}},
\]
and since
\[
	\lim_{j \to \infty} \frac{\sinh \vartheta_j(x)}{\vartheta_j(x)} = 1
\]
the claim follows.

\section{Generalized shifted Tur\'an determinants} \label{sec:6}
In this section we study generalized $N$-shifted Tur\'an determinants. Namely, for $\eta \in \RR^2 \setminus \{0\}$ and $x \in \RR$ we consider
\[
	S_n(\eta, x) = a_{n+N-1} \sqrt{\gamma_{n+N-1}} \big\langle E \vec{u}_{n+N}, \vec{u}_n \big\rangle
\]
where $(\vec{u}_n : n \in \NN_0)$ corresponds to a generalized eigenvector associated to $x$ and corresponding to $\eta$,
and
\[
	E = 
	\begin{pmatrix}
		0 & -1 \\
		1 & 0
	\end{pmatrix}.
\]
\begin{theorem} \label{thm:5}
	Let $N$ be a positive integer and $i \in \{0, 1, \ldots N-1\}$. Let $(\gamma_n : n \in \NN)$ be a sequence of positive
    numbers tending to infinity and satisfying \eqref{eq:87} and \eqref{eq:89}. Let $(a_n : n \in \NN_0)$ and
	$(b_n : n \in \NN_0)$ be $\gamma$-tempered $N$-periodically modulated Jacobi parameters such that $\frakX_0(0)$ is
	a non-trivial parabolic element. Suppose that \eqref{eq:88} holds true with $\varepsilon = \sign{\tr \frakX_0(0)}$.
	Then the sequence $(|S_{jN+i}| : j \in \NN)$ converges locally uniformly on $\sS^1 \times \Lambda_-$ to
	a positive continuous function.
\end{theorem}
\begin{proof}
	We use the uniform diagonalization described in Section \ref{sec:5}. Let us define
	\begin{equation}
		\label{eq:66}
		\tilde{S}_j = a_{(j+1)N+i-1} \sqrt{\gamma_{(j+1)N+i-1}} (\det Z_j) \langle E \vec{v}_{j+1}, \vec{v}_j \rangle.
	\end{equation}
	The first step is to show that $(\tilde{S}_j : j \geq j_0)$ is asymptotically close to $(S_{jN+i} : j \geq j_0)$.
	\begin{claim}
		\label{clm:6}
		We have
		\[
			\lim_{j \to \infty} \big| S_{jN+i} - \tilde{S}_j \big|= 0
		\]
		uniformly on $\sS^1 \times K$.
	\end{claim}
	For the proof we write
	\begin{align*}
		S_{jN+i} 
		&= a_{(j+1)N+i-1} \sqrt{\gamma_{(j+1)N+i-1}} \langle E \vec{u}_{(j+1)N+i}, \vec{u}_{jN+i} \rangle \\
		&= a_{(j+1)N+i-1} \sqrt{\gamma_{(j+1)N+i-1}} \langle Z_j^* E Z_{j+1} \vec{v}_{j+1}, \vec{v}_j \rangle \\
		&= a_{(j+1)N+i-1} \sqrt{\gamma_{(j+1)N+i-1}} (\det Z_j) \langle E Z_j^{-1} Z_{j+1} \vec{v}_{j+1}, \vec{v}_j \rangle
	\end{align*}
	where we have used that for any $Y \in \GL(2, \RR)$, 
	\begin{equation}
		\label{eq:53}
		(Y^{-1})^* E = \frac{1}{\det Y} E Y.
	\end{equation}
	Now, by Theorem~\ref{thm:2}
	\begin{align*}
		S_{jN+i} - \tilde{S}_j &=
		a_{(j+1)N+i-1} \sqrt{\gamma_{(j+1)N+i-1}} 
		(\det Z_j) 
		\big\langle E (Z_j^{-1} Z_{j+1} - \Id) \vec{v}_{j+1}, \vec{v}_j \big\rangle \\
		&=
		a_{(j+1)N+i-1} \sqrt{\gamma_{(j+1)N+i-1}}
		(\det Z_j) \sqrt{\frac{\alpha_{i-1}}{\gamma_{(j+1)N+i-1}}}
		\big\langle E Q_j \vec{v}_{j+1}, \vec{v}_j \big\rangle.
	\end{align*}
	Observe that by \eqref{eq:47} and \eqref{eq:48}
	\[
		\|D_k\|^2 = \abs{\lambda_k}^2 = \lambda_k \overline{\lambda_k} = \det Y_k.
	\]
	Therefore, by \eqref{eq:46}, 
	\[
		\prod_{k = j_0}^{j-1} \|D_k\|^2 = \frac{\det Z_{j_0}}{\det Z_j} \frac{a_{j_0 N+i-1}}{a_{jN+i-1}}.
	\]
	Next, in view of Claim \ref{clm:1}, for $j \geq j_0$,
	\[
		\|\vec{v}_j\|^2 \lesssim \prod_{k = j_0}^{j-1} \|D_k\|^2 \lesssim \frac{1} {a_{jN+i-1} |\det Z_j|}.
	\]
	Hence, 
	\begin{align*}
		\big|S_{jN+i} - \tilde{S}_j\big|
		&\lesssim
		a_{(j+1)N+i-1} \sqrt{\gamma_{(j+1)N+i-1}}
		\sqrt{\frac{\alpha_{i-1}}{\gamma_{(j+1)N+i-1}}}
		|\det Z_j| \cdot\|Q_j\| \cdot \|\vec{v}_j\|^2 \\
		&\lesssim
		\|Q_j\|
	\end{align*}
	and the claim follows by Theorem \ref{thm:2}.

	We show next that the sequence $(\tilde{S}_j : j \geq j_0)$ converges uniformly on $\sS^1 \times K$ to a positive
	continuous function. By \eqref{eq:66} and \eqref{eq:53}, we have
	\begin{align*}
		\tilde{S}_j 
		&= a_{(j+1)N+i-1} \sqrt{\gamma_{(j+1)N+i-1}} (\det Z_j) \langle E \vec{v}_{j+1}, Y_j^{-1} \vec{v}_{j+1} \rangle \\
		&= a_{(j+1)N+i-1} \sqrt{\gamma_{(j+1)N+i-1}} (\det Z_j) 
		\langle (Y_j^{-1})^* E \vec{v}_{j+1}, \vec{v}_{j+1} \rangle \\
		&= a_{(j+1)N+i-1} \sqrt{\gamma_{(j+1)N+i-1}} (\det Z_j \cdot \det Y_j^{-1}) \langle E Y_j \vec{v}_{j+1}, \vec{v}_{j+1} \rangle,
	\end{align*}
	and since
	\[
		\det Y_j = \det \big( Z_{j+1}^{-1} X_{jN+i} Z_j \big),
	\]
	we obtain
	\begin{equation}
		\label{eq:54}
		\tilde{S}_j = 
		a_{(j+1)N+i-1} \sqrt{\gamma_{(j+1)N+i-1}}
		(\det Z_{j+1}) \frac{a_{(j+1)N+i-1}}{a_{jN+i-1}} 
		\langle E Y_j \vec{v}_{j+1}, \vec{v}_{j+1} \rangle.
	\end{equation}
	By \eqref{eq:66} we have
	\[
		\tilde{S}_{j+1} = 
		a_{(j+2)N+i-1} \sqrt{\gamma_{(j+2)N+i-1}}
		(\det Z_{j+1}) 
		\langle E Y_{j+1} \vec{v}_{j+1}, \vec{v}_{j+1} \rangle.
	\]
	Therefore, by Theorem \ref{thm:1}
	\begin{align*}
		\tilde{S}_{j+1} - \tilde{S}_j
		=
		\varepsilon
		a_{(j+1)N+i-1} \sqrt{\gamma_{(j+1)N+i-1}}
		(\det Z_{j+1}) 
		\left\langle
			E W_j \vec{v}_{j+1}, \vec{v}_{j+1}
		\right\rangle
	\end{align*}
	where
	\[
		W_j =
		\frac{a_{(j+2)N+i-1}}{a_{(j+1)N+i-1}} \frac{\sqrt{\gamma_{(j+2)N+i-1}}}{\sqrt{\gamma_{(j+1)N+i-1}}}
		\sqrt{\frac{\alpha_{i-1}}{\gamma_{(j+2)N+i-1}}}
		R_{j+1} 
		-
		\frac{a_{(j+1)N+i-1}}{a_{jN+i-1}} \sqrt{\frac{\alpha_{i-1}}{\gamma_{(j+1)N+i-1}}} R_j.
	\]
	Hence,
	\begin{align*}
		W_j = \sqrt{\frac{\alpha_{i-1}}{\gamma_{(j+1)N+i-1}}}
		\bigg(  
		\frac{a_{(j+2)N+i-1}}{a_{(j+1)N+i-1}} R_{j+1} - \frac{a_{(j+1)N+i-1}}{a_{jN+i-1}} R_j \bigg),
	\end{align*}
	and so
	\[
		\| W_j \|
		\lesssim 
		\sqrt{\frac{\alpha_{i-1}}{\gamma_{(j+1)N+i-1}}}
		\Big( 
		\Big| \Delta \Big( \frac{a_{(j+1)N+i-1}}{a_{jN+i-1}} \Big) \Big| +
		\big\| \Delta R_j \big\|
		\Big).
	\]
	Therefore,
	\[
		\big|\tilde{S}_{j+1} - \tilde{S}_j\big|
		\lesssim
		a_{(j+1)N+i-1} 
		|\det Z_{j+1}|
		\Big(
		\Big| \Delta \Big( \frac{a_{(j+1)N+i-1}}{a_{jN+i-1}} \Big) \Big| +
		\big\| \Delta R_j \big\|
		\Big) \|\vec{v}_{j+1}\|^2.
	\]
	On the other hand, by \eqref{eq:54},
	\[
		\tilde{S}_j = 
		\varepsilon
		a_{(j+1)N+i-1} \sqrt{\gamma_{(j+1)N+i-1}}
		(\det Z_{j+1}) 
		\frac{a_{(j+1)N+i-1}}{a_{jN+i-1}}
		\sqrt{\frac{\alpha_{i-1}}{\gamma_{(j+1)N+i-1}}}
		\big\langle 
			E R_j \vec{v}_{j+1}, \vec{v}_{j+1} 
		\big\rangle.
	\]
	Since 
	\begin{equation}
		\label{eq:28}
		\lim_{j\to \infty} \sym (E R_j) =
		\sym(E \calR_i) = 
		\frac{\sqrt{|\tau|}}{2} 
		\begin{pmatrix}
			-1 & 1\\
			1 & -1
		\end{pmatrix}
		-
		\frac{\upsilon}{2 \sqrt{|\tau}|}
		\begin{pmatrix}
			1 & 1 \\
			1 & 1
		\end{pmatrix}
		-
		\frac{\frakS}{2}
		\begin{pmatrix}
			1 & 0 \\
			0 & -1
		\end{pmatrix}
	\end{equation}
	and the matrix on the right-hand side of \eqref{eq:28} has determinant equal to $-\tau > 0$, we obtain
	\[
		|\tilde{S}_j| \gtrsim
		a_{(j+1)N+i-1} 
		|\det Z_{j+1}| \cdot
		\|\vec{v}_{j+1} \|^2.
	\]
	Consequently, we arrive at
	\[
		|\tilde{S}_{j+1} - \tilde{S}_j| \lesssim 
		\Big( 
		\Big| \Delta \Big( \frac{a_{(j+1)N+i-1}}{a_{jN+i-1}} \Big) \Big| +
		\| \Delta R_j \|
		\Big) |\tilde{S}_j|.
	\]
	Since $\tilde{S}_j \neq 0$ on $K$, we get
	\[
		\sum_{j = j_0}^\infty
		\sup_{\eta \in \sS^1} \sup_{x \in K} {
		\bigg|
		\frac{\abs{\tilde{S}_{j+1}(\eta, x)}}{\abs{\tilde{S}_j(\eta, x)}} - 1 
		\bigg|}
		\lesssim
		\sum_{j = j_0}^\infty
		\Big| \Delta \Big( \frac{a_{(j+1)N+i-1}}{a_{jN+i-1}} \Big) \Big| + 
		\sup_{x \in K}{\| \Delta R_j(x) \|},
	\]
	which implies that the product
	\[
		\prod_{k = j_0}^\infty \bigg(1 + \frac{\abs{\tilde{S}_{k+1}} - \abs{\tilde{S}_k}}{\abs{\tilde{S}_k}}\bigg)
	\]
	converges uniformly on $\sS^1 \times K$ to a positive continuous function. Because
	\[
		\bigg| \frac{\tilde{S}_j}{\tilde{S}_{j_0}} \bigg|
		=
		\prod_{k = j_0}^{j-1} \bigg(1 + \frac{\abs{\tilde{S}_{k+1}} - \abs{\tilde{S}_k}}{\abs{\tilde{S}_k}}\bigg),
	\]
	the same holds true for the sequence $(\tilde{S}_j : j \geq j_0)$. In view of Claim \ref{clm:6},
	the proof is completed.
\end{proof}

\begin{corollary} 
	\label{cor:4}
	Suppose that the hypotheses of Theorem~\ref{thm:5} are satisfied. Then for any compact $K \subset \Lambda_-$ there is
	a constant $c>1$ such that for any generalized eigenvector $\vec{u}$ associated with $x\in K$ and corresponding to
	$\eta \in \sS^1$, we have
	\[
		c^{-1} \leq \frac{a_{(j+1)N+i-1}}{\sqrt{\gamma_{(j+1)N+i-1}}} \big\| \vec{v}_j \big\|^2 \leq c
	\]
	where $\vec{v}_j = Z_{j}^{-1} \vec{u}_{jN+i}$.
\end{corollary}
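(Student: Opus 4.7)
The plan is to sandwich $\|\vec{v}_j\|^2$ between constant multiples of $\prod_{k=L}^{j-1} |\lambda_k|^2$ and then convert the resulting product into the scale $\sqrt{\gamma_{(j+1)N+i-1}}/a_{(j+1)N+i-1}$ by means of Claim~\ref{clm:2}. The upper sandwich bound is supplied directly by Claim~\ref{clm:1}; the lower bound will come from an analogous estimate applied to the inverses $Y_k^{-1}$.

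First I would apply Claim~\ref{clm:1} to obtain $\|\vec{v}_j\|^2 \leq c^2 \|\vec{v}_L\|^2 \prod_{k=L}^{j-1} |\lambda_k|^2$ uniformly on $\sS^1 \times K$ and then use Claim~\ref{clm:2} to replace the product by something $\lesssim \sqrt{\gamma_{jN+i-1}}/a_{jN+i-1}$ for $j$ sufficiently large. To close the upper bound it remains to show that $\|\vec{v}_L\|^2$ is bounded on $\sS^1 \times K$ (in fact from above \emph{and} below by positive constants), and that $\sqrt{\gamma_{jN+i-1}}/a_{jN+i-1} \asymp \sqrt{\gamma_{(j+1)N+i-1}}/a_{(j+1)N+i-1}$. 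The boundedness of $\|\vec{v}_L\|^2$ follows from $\vec{v}_L = Z_L^{-1}\vec{u}_{LN+i}$ being a strictly positive continuous function of $(\eta,x)$ on the compact set $\sS^1 \times K$, since $\eta \mapsto \vec{u}_{LN+i}$ is the bijective linear map given by the product of transfer matrices (which have nonzero determinants $a_{n-1}/a_n$) and $Z_L$ is invertible on $K$ because $\det Z_L = -2(\det T_i)\sinh\vartheta_L$ with $\vartheta_L > 0$ on $\Lambda_-$. The comparison of scales is immediate from $a_{(j+1)N+i-1}/a_{jN+i-1} \to 1$ (periodic modulation) and $\sqrt{\gamma_{(j+1)N+i-1}}/\sqrt{\gamma_{jN+i-1}} \to 1$ (from \eqref{eq:42} together with $\gamma_n \to \infty$).

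For the lower bound I would write $\vec{v}_L = Y_L^{-1} Y_{L+1}^{-1} \cdots Y_{j-1}^{-1} \vec{v}_j$ and repeat the telescoping decomposition of Claim~\ref{clm:1}, applied now to $Y_k^{-1} = C_k D_k^{-1} C_k^{-1}$. Since $(C_k)$ and $(C_k^{-1})$ belong to $\calD_1\big(K, \Mat(2,\CC)\big)$ with invertible limits and $\|D_k^{-1}\| = |\lambda_k|^{-1}$, the same invocation of \cite[Proposition~1]{SwiderskiTrojan2019} yields
\[
	\|\vec{v}_L\| \leq c \Big(\prod_{k=L}^{j-1} |\lambda_k|^{-1}\Big) \|\vec{v}_j\|,
\]
which, combined with Claim~\ref{clm:2} and the positive lower bound on $\|\vec{v}_L\|^2$, gives $\frac{a_{(j+1)N+i-1}}{\sqrt{\gamma_{(j+1)N+i-1}}} \|\vec{v}_j\|^2 \geq c^{-1}$ for all $j$ large; the finitely many small $j$ are absorbed by enlarging $c$, since each $\|\vec{v}_j\|^2$ is strictly positive and continuous on the compact set $\sS^1 \times K$.

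The main technical point is the justification of the Claim~\ref{clm:1}-type estimate for $Y_k^{-1}$ uniformly in $(\eta,x)$, but I expect no real obstacle here: the telescoping argument is identical in form to that of Claim~\ref{clm:1}, and the only new observation required is that the diagonalizing data $(C_k, D_k, C_k^{-1})$ extend without modification from $Y_k$ to $Y_k^{-1}$ so that the $\calD_1$-telescoping estimate from \cite[Proposition~1]{SwiderskiTrojan2019} applies verbatim.
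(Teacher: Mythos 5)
Your argument is correct, but it is genuinely different from the one in the paper. The paper deduces the corollary directly from Theorem~\ref{thm:5}: starting from \eqref{eq:66} one has $\tilde{S}_j = \varepsilon\, a_{(j+1)N+i-1}\sqrt{\alpha_{i-1}}\,(\det Z_j)\langle E R_j \vec{v}_j,\vec{v}_j\rangle$, and since $\sym(ER_j)\to\sym(E\calR_i)$, whose determinant equals $-\tau>0$ on $\Lambda_-$ (see \eqref{eq:28}), the quadratic form is uniformly definite, giving $|\tilde{S}_j|\asymp a_{(j+1)N+i-1}|\det Z_j|\,\|\vec{v}_j\|^2$ — both bounds at once; combining this with $\sqrt{\gamma_{(j+1)N+i-1}}\,\det Z_j\to -2\sqrt{\alpha_{i-1}|\tau|}$ and the uniform convergence of $|\tilde{S}_j|$ to a positive function finishes the proof in a few lines. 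You instead avoid Theorem~\ref{thm:5} altogether and obtain the two-sided bound by sandwiching: the upper bound from Claim~\ref{clm:1} and Claim~\ref{clm:2}, and the lower bound by running the same telescoping estimate on $Y_k^{-1}=C_kD_k^{-1}C_k^{-1}$ with $\|D_k^{-1}\|=|\lambda_k|^{-1}$, which is legitimate since $(C_k)$, and hence $(C_k^{-1})$, lies in $\calD_1(K,\Mat(2,\CC))$ with invertible uniform limit, so the bound $\|\prod_k D_k^{-1}C_k^{-1}C_{k+1}\|\le c\prod_k|\lambda_k|^{-1}$ follows by the same mechanism (indeed by plain submultiplicativity and $\sum_k\sup_K\|\Delta C_k\|<\infty$). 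The remaining ingredients you invoke — positivity and boundedness of $\|\vec{v}_L\|$ at a fixed index by compactness and invertibility of the transfer matrices and of $Z_L$ on $\Lambda_-$, the asymptotic equality of the scales at indices $jN+i-1$ and $(j+1)N+i-1$, and absorbing finitely many $j$ into the constant — are all sound. Your route is more self-contained (it does not rely on the Turán-determinant machinery of Section~\ref{sec:6}) at the cost of the extra inverse-product estimate; the paper's route is shorter given Theorem~\ref{thm:5} and exploits the definiteness of $\sym(E\calR_i)$ to get both inequalities simultaneously.
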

\begin{proof}
	By \eqref{eq:66} and Theorem~\ref{thm:1} we have
	\begin{align*}
		\tilde{S}_j 
		&= 
		a_{(j+1)N+i-1} \sqrt{\gamma_{(j+1)N+i-1}} (\det Z_j) \langle E Y_j \vec{v}_j, \vec{v}_j \rangle \\
		&= \varepsilon a_{(j+1)N+i-1} \sqrt{\alpha_{i-1}} (\det Z_j) \langle E R_j \vec{v}_j, \vec{v}_j \rangle.
	\end{align*}
	Hence, by \eqref{eq:28}, we have
	\[
		|\tilde{S}_j| \asymp a_{(j+1)N+i-1} |\det Z_j| \| \vec{v}_j \|^2.
	\]
	Observe that
	\begin{align*}
		\lim_{j \to \infty}
		\sqrt{\gamma_{(j+1)N+i-1}} \det Z_j 
		&= 
		\lim_{j \to \infty}
		\frac{-2 \sinh \vartheta_j}{\vartheta_j} \vartheta_j \sqrt{\gamma_{(j+1)N+i-1}} \\
		&=
		-2 \sqrt{\alpha_{i-1} |\tau(x)|}
	\end{align*}
	uniformly on $K$. By the fact that $\tilde{S}_j$ is uniformly convergent on $\sS^1 \times K$ to a positive function,
	the conclusion follows.
\end{proof}

\section{Asymptotics of the generalized eigenvectors} \label{sec:7}
In this section we study the asymptotic behavior of generalized eigenvectors. We keep the notation introduced 
in Section \ref{sec:5}.

\begin{theorem} \label{thm:4}
	Let $N$ be a positive integer. Let $(\gamma_n : n \in \NN)$ be a sequence of positive
	numbers tending to infinity and satisfying \eqref{eq:87} and \eqref{eq:89}. Let $(a_n : n \in \NN_0)$ and
	$(b_n : n \in \NN_0)$ be $\gamma$-tempered $N$-periodically modulated Jacobi parameters such that $\frakX_0(0)$ is
	a non-trivial parabolic element. Suppose that \eqref{eq:88} holds true with $\varepsilon = \sign{\tr \frakX_0(0)}$.
	Then for each $i \in \{0, 1, \ldots, N-1\}$ and every compact interval $K \subset \Lambda_-$, there are $j_0 \in \NN$
	and a~continuous function $\vphi: \sS^1 \times K \rightarrow \CC$ such that for every generalized eigenvector
	$(u_n : n \in \NN_0)$,
	\[
		\lim_{j \to \infty}
		\sup_{\eta \in \sS^1} 
		\sup_{x \in K}
		\bigg|
		\frac{\sqrt{\gamma_{(j+1)N+i-1}}}{\prod_{k = j_0}^{j-1} \lambda_k(x)}
		\Big(u_{(j+1)N+i}(\eta, x) - \overline{\lambda_j(x)} u_{jN+i}(\eta, x)\Big)
		-
		\vphi(\eta, x)
		\bigg|
		=0.
	\]
	Moreover, $\vphi(\eta,x) = 0$ if and only if $[\frakX_i(0)]_{21} = 0$. Furthermore,
	\[
		\frac{u_{jN+i}(\eta, x)}{\prod_{k = j_0}^{j-1} |\lambda_k(x)|}
		=
		\frac{|\vphi(\eta, x)|}{\sqrt{\alpha_{i-1} |\tau(x)|}}
		\sin
		\Big(
		\sum_{k = j_0}^{j-1} \theta_k(x) + \arg \vphi(\eta, x)
		\Big)
		+
		E_j(\eta, x)
	\]
	where
	\begin{equation}
		\label{eq:64}
		\theta_k(x) = \arccos\bigg(\frac{\tr Y_k(x)}{2 \sqrt{\det Y_k(x)}} \bigg)
	\end{equation}
	and
	\[
		\lim_{j \to \infty} \sup_{\eta \in \sS^1} \sup_{x \in K} {|E_j(\eta, x)|} = 0.
	\]
\end{theorem}
\begin{proof}
	In the proof we use the uniform diagonalization constructed in Section \ref{sec:5}. For $j > j_0$, we set
	\[
		\phi_j = \frac{u_{(j+1)N+i} - \overline{\lambda_j} u_{jN+i}}{\prod_{k = j_0}^{j-1} \lambda_k}.
	\]
	Let us observe that there is $c > 0$ such that for all $j \in \NN$, and $x \in K$,
	\begin{equation}
		\label{eq:49}
		\left\|
		Z_j^t e_2 - 
		\begin{pmatrix}
			1 & 1 \\
			1 & 1
		\end{pmatrix}
		T_i^t e_2
		\right\|
		\leq
		c \vartheta_j.
	\end{equation}
	We show that the sequence $(\sqrt{\gamma_{(j+1) N + i-1}} \phi_j : j > j_0)$ converges uniformly on $K$. By
	\eqref{eq:47}, $\|D_j\| = |\lambda_j|$, thus by Claim \ref{clm:1} 
	\begin{align*}
		\big|
		u_{(j+1)N+i} - \sprod{\vec{v}_j}{Z_j^t e_2}
		\big|
		&=
		\big|
		\big\langle
		\vec{v}_{j+1}, (Z^t_{j+1} - Z^t_j) e_2
		\big\rangle
		\big| \\
		&\leq
		\|\vec{v}_{j+1}\| \cdot |\vartheta_{j+1} - \vartheta_j| \\
		&\lesssim
		\bigg(\prod_{k = j_0}^{j-1} |\lambda_k| \bigg) |\vartheta_{j+1} - \vartheta_j|.
	\end{align*}
	Hence,
	\[
		\lim_{j \to \infty}
		\sqrt{\gamma_{(j+1)N+i-1}} \frac{\big| u_{(j+1)N+i} - \sprod{\vec{v}_j}{Z_j^t e_2} \big|}
		{\prod_{k = j_0}^{j-1} |\lambda_k|} = 0
	\]
	uniformly on $K$. Next, by \eqref{eq:48} we write
	\[
		(Y_j - \overline{\lambda_j} \Id) \vec{v}_j = C_j
		\begin{pmatrix}
			\lambda_j - \overline{\lambda_j} & 0 \\
			0 & 0
		\end{pmatrix}
		C_j^{-1} \vec{v}_j,
	\]
	thus by \eqref{eq:49} we obtain
	\begin{align}
		\nonumber
		\left|
		\bigg\langle (Y_j - \overline{\lambda_j} \Id) \vec{v}_j, Z_j^t e_2 \bigg\rangle
		-
		\bigg\langle (Y_j - \overline{\lambda_j} \Id) \vec{v}_j, 
		\begin{pmatrix}
			1 & 1\\
			1 & 1
		\end{pmatrix}
		T_i^t e_2
		\bigg\rangle
		\right|
		&\leq \vartheta_j |\lambda_j - \overline{\lambda_j}| \cdot \|\vec{v}_j\| \\
		\label{eq:52}
		&\lesssim
		\vartheta_j^2 \Big(\prod_{k = j_0}^{j-1} |\lambda_k|\Big)
	\end{align}
	where in the last estimate we have used
	\begin{equation}
		\label{eq:51}
		|\lambda_j-\overline{\lambda_j}| = 
		|2i \Im(\lambda_j))| =
		\sqrt{\frac{\alpha_{i-1}}{\gamma_{(j+1)N+i-1}}} \sqrt{-\discr R_j}.
	\end{equation}
	Hence, it is enough to show that the sequence $(\tilde{\phi}_j : j > j_0)$
	where
	\[
		\tilde{\phi}_j = 
		\frac{\sqrt{\gamma_{(j+1)N+i-1}}}{\prod_{k = j_0}^{j-1} \lambda_k} 
		\Big\langle \big( Y_j - \overline{\lambda_j} \Id \big) \vec{v}_j, 
		\begin{pmatrix}
			1 & 1 \\
			1 & 1
		\end{pmatrix}
		T_i^t e_2
		\Big\rangle
	\]
	converges uniformly on $\sS^1 \times K$. To do so, for a given $\epsilon > 0$ there is $L_0 > j_0$ such that
	for all $L \geq L_0$ we have
	\[
		\sum_{k=L-1}^\infty \sup_{K} \| \Delta C_k \| < \epsilon.
	\]
	For $j \geq L$, we set
	\[
		\psi_{j;L} = 
		\frac{\sqrt{\gamma_{(j+1)N+i-1}}}{\prod_{k = j_0}^{j-1} \lambda_k} 
		\Big\langle 
		C_{j} \big( D_j - \overline{\lambda_j} \Id \big) 
		\Big( \prod_{k=L}^{j-1} D_k \Big) C_{L-1}^{-1} \vec{v}_L, 
		\begin{pmatrix}
			1 & 1 \\
			1 & 1
		\end{pmatrix}
		T_i^t e_2
		\Big\rangle.
	\]
	Observe that
	\begin{align*}
		(Y_j - \overline{\lambda_j} \Id) \vec{v}_j &=
		(Y_j - \overline{\lambda_j} \Id) \Big( \prod_{k=L}^{j-1} Y_k \Big) \vec{v}_L \\
		&=
		C_j (D_j - \overline{\lambda_j} \Id) C_j^{-1} \Big( \prod_{k=L}^{j-1} C_k D_k C_k^{-1} \Big) \vec{v}_L \\
		&=
		C_j (D_j - \overline{\lambda_j} \Id) D_j^{-1} \Big( \prod_{k=L}^{j} D_k C_k^{-1} C_{k-1} \Big) C_{L-1}^{-1} \vec{v}_L.
	\end{align*}
	Hence, by \cite[Proposition 1]{SwiderskiTrojan2019}, there is $c > 0$ such that for all $j \geq L \geq j_0$,
	\[
		\bigg\| \prod_{k=L}^j D_k C_k C^{-1}_{k-1} - \prod_{k=L}^j D_k \bigg\| \leq
		c \prod_{k=L}^j |\lambda_k| \cdot \sum_{k=L-1}^\infty \sup_{K} \| \Delta C_k \|.
	\]
	Thus, by Claim~\ref{clm:1}, \eqref{eq:51} and the fact that $\|D_k\| = |\lambda_k|$, we obtain
	\begin{equation} \label{eq:120}
		|\tilde{\phi}_j - \psi_{j;L}| \leq 
		c \sum_{k=L-1}^\infty \sup_{K} \| \Delta C_k \| \leq c\epsilon,
	\end{equation}
	for all $j \geq L$. Hence, for all $n > m \geq L$,
	\[
		|\tilde{\phi}_n - \tilde{\phi}_m| \leq c \epsilon + |\psi_{n; L} - \psi_{m; L}|.
	\]
	Therefore, our task is reduced to showing that the sequence $(\psi_{j;L} : j \geq L)$ converges uniformly on $K$.
	Since, by \eqref{eq:51}
	\begin{align*}
		\frac{\sqrt{\gamma_{(j+1)N+i-1}}}{\prod_{k=L}^{j-1} \lambda_k} 
		\big( D_j - \overline{\lambda_j} \Id \big) 
		\prod_{k=L}^{j-1} D_k
		&=
		i
		\sqrt{\alpha_{i-1}} \sqrt{-\discr R_j} 
		\begin{pmatrix}
			1 & 0 \\
			0 & 0
		\end{pmatrix}
		\prod_{k=L}^{j-1} \frac{D_k}{\lambda_k} \\
		&=
		i \sqrt{\alpha_{i-1}} \sqrt{-\discr R_j}  
		\begin{pmatrix}
			1 & 0 \\
			0 & 0
		\end{pmatrix}
	\end{align*}
	we get uniformly on $K$
	\begin{equation} \label{eq:121}
		\lim_{j \to \infty} \psi_{j; L} =
		\frac{i \sqrt{\alpha_{i-1}} \sqrt{-\discr \calR_i}}{\prod_{k=j_0}^{L-1} \lambda_k}
		\Big\langle
			C_{\infty}
			\begin{pmatrix}
				1 & 0 \\
				0 & 0
			\end{pmatrix}
			C_{L-1}^{-1}
			\vec{v}_{L},
			\begin{pmatrix}
				1 & 1 \\
				1 & 1
			\end{pmatrix}
			T_i^t e_2
		\Big\rangle
	\end{equation}
	where
	\begin{equation} \label{eq:122}
		C_{\infty} = \lim_{j \to \infty} C_j =
		\begin{pmatrix}
			1 & 1\\
			\frac{\xi_{\infty} - [\calR_i]_{11}}{[\calR_i]_{12}} & \frac{\overline{\xi_{\infty}} - [\calR_i]_{11}}{[\calR_i]_{12}}
		\end{pmatrix}.
	\end{equation}
	Thus, we have proved that both sequences $(\tilde{\phi}_j : j > j_0)$ and $(\psi_{j;L} : j \geq L)$ converge uniformly on
	$K$. Let us denote its limits by $\tilde{\phi}_\infty$ and $\psi_{\infty;L}$, respectively. By \eqref{eq:120},
	for all $L \geq L_0$ we have
	\begin{equation} 
		\label{eq:123}
		|\tilde{\phi}_\infty - \psi_{\infty;L}| \leq c \epsilon.
	\end{equation}
	Let us observe that
	\[
		\begin{pmatrix}
			1 & 1 \\
			1 & 1
		\end{pmatrix}
		C_{\infty}
		\begin{pmatrix}
			1 & 0 \\
			0 & 0
		\end{pmatrix}
		= \big( [C_{\infty}]_{11} + [C_\infty]_{21} \big)
		\begin{pmatrix}
			1 & 0 \\
			1 & 0
		\end{pmatrix}.
	\]
	By \eqref{eq:122} the expression on the right-hand side has non-zero imaginary part. Thus from \eqref{eq:121} we can write 
	\begin{align}
		\nonumber
		\psi_{\infty;L} &= 
		\frac{h}{\prod_{k=j_0}^{L-1} \lambda_k} 
		\Big\langle
			C_{L-1}^{-1} \vec{v}_L, 
			\begin{pmatrix}
				1 & 1 \\
				0 & 0
			\end{pmatrix}
			T_i^t e_2
		\Big\rangle \\
		\label{eq:93}
		&=
		\frac{h}{\prod_{k=j_0}^{L-1} \lambda_k} 
		\big( [T_i]_{21} + [T_i]_{22} \big)
		\langle
			C_{L-1}^{-1} \vec{v}_L, e_1
		\rangle
	\end{align}
	for some function $h$ without zeros on $\sS^1 \times K$. Thus, by \eqref{eq:34}, if $[\frakX_{i}(0)]_{21} = 0$, then
	$\psi_{\infty;L} \equiv 0$ for all $L$. Consequently, by \eqref{eq:123}, $\tilde{\phi}_{\infty} \equiv 0$ on $\sS^1 \times K$.
	On the other hand, if $[\frakX_{i}(0)]_{21} \neq 0$, then the following claim holds true.
	\begin{claim}
		\label{clm:4}
		For each $(\eta, x) \in \sS^1 \times K$,
		\[
			\liminf_{L \to \infty} \abs{\psi_{\infty;L}(\eta, x)} > 0.
		\]
	\end{claim}
	On the contrary, let us suppose that there are $\eta \in \sS^1$, $x \in K$ and a sequence $(L_j : j \in \NN)$ such that
	\[
		\lim_{j \to \infty} L_j = \infty,
	\]
	and
	\[
		\lim_{j \to \infty} \psi_{\infty;L_j} (\eta,x) = 0.
	\]
	Setting $\vec{v}_L = v^{L}_1 e_1 + v^L_2 e_2$, we have
	\begin{align*}
		\langle C_{L-1}^{-1} \vec{v}_L, e_1 \rangle
		&=
		\bigg\langle
		\begin{pmatrix}
			\frac{\overline{\xi_{L-1}} - [R_{L-1}]_{11}}{[R_{L-1}]_{12}} & -1 \\
			\frac{\xi_{L-1} - [R_{L-1}]_{11}}{[R_{L-1}]_{12}} & 1
		\end{pmatrix}			
		\vec{v}_L, e_1
		\bigg\rangle \\
		&=
		\frac{\overline{\xi_{L-1}} - [R_{L-1}]_{11}}{[R_{L-1}]_{12}} v^L_1 - v^L_2.
	\end{align*}
	Hence, by \eqref{eq:93}, we obtain
	\[
		\lim_{j \to \infty} 
		\frac{1}{\prod_{k=j_0}^{L_j-1} |\lambda_k(x)|} 
		\bigg( \frac{\overline{\xi_{L_j-1}(x)} - [R_{L_j-1}(x)]_{11}}{[R_{L_j-1}(x)]_{12}} v^{L_j}_1(\eta, x) 
		- v^{L_j}_2(\eta, x) \bigg) = 0.
	\]
	In view of \eqref{eq:46a}, $\vec{v}_{L_j}(\eta, x)$ is a real vector, thus by taking imaginary parts of the bracket, 
	we conclude that
	\[
		\lim_{j \to \infty} \frac{v^{L_j}_1(\eta, x)}{\prod_{k=j_0}^{L_j-1} |\lambda_k(x)|} = 0.
	\]
	Hence,
	\[
		\lim_{j \to \infty} \frac{v^{L_j}_2(\eta, x)}{\prod_{k=j_0}^{L_j-1} |\lambda_k(x)|} = 0,
	\]
	which in view of Claim \ref{clm:2} contradicts to Corollary~\ref{cor:4} proving the claim.

	Next, let us consider $\eta \in \sS^1$ and $x \in K$. By Claim \ref{clm:4},
	\begin{equation} \label{eq:146}
		A = \liminf_{L \to \infty} \abs{\psi_{\infty;L}(\eta, x)} > 0.
	\end{equation}
	Taking $\epsilon = \frac{A}{2c}$, by \eqref{eq:123}, for all $L \geq L_0$,
	\[
		\abs{\tilde{\phi}_{\infty}(\eta, x)}
		\geq
		\abs{\psi_{\infty;L}(\eta, x)} - c \epsilon
		=
		\abs{\psi_{\infty;L}(\eta, x)} - \frac{A}{2}.
	\]
	Thus, in view of \eqref{eq:146}, 
	\[
		\abs{\tilde{\phi}_{\infty}(\eta, x)}
		\geq
		\frac{A}{2}.
	\]
	Consequently, $\tilde{\phi}_{\infty}$ cannot be zero on $\sS^1 \times K$ provided that $[\frakX_i(0)]_{21} \neq 0$.
	
	In view of \eqref{eq:52} there is a function $\vphi: \sS^1 \times K \rightarrow \RR$, such that
	\[
		\vphi = \lim_{j \to \infty} \sqrt{\gamma_{(j+1)N+i-1}} \phi_j
	\]
	uniformly on $\sS^1 \times K$. In fact, one has $\vphi = \tilde{\phi}_{\infty}$. In particular, we obtain
	\begin{align*}
		\lim_{j \to \infty}
		\sup_{\eta \in \sS^1}
		\sup_{x \in K}{
		\bigg|
		\sqrt{\gamma_{(j+1)N+i-1}} 
		\frac{u_{(j+1)N+i}(\eta, x) - \overline{\lambda_j(x)} u_{jN+i}(\eta, x)}{\prod_{k = j_0}^{j-1} |\lambda_k(x)|}
		-
		\vphi(\eta, x) \prod_{k = j_0}^{j-1} \frac{\lambda_k(x)}{|\lambda_k(x)|}
		\bigg|}
		=0.
	\end{align*}
	Since $u_n(\eta, x) \in \RR$, by taking imaginary part we conclude that
	\begin{align*}
		\lim_{j \to \infty}
		\sup_{\eta \in \sS^1}
		\sup_{x \in K}
		\bigg|
		&
		\frac{\sqrt{\alpha_{i-1}}}{2} 
		\sqrt{-\discr R_j(x)}
		\frac{u_{jN+i}(\eta, x)}{\prod_{k = j_0}^{j-1} |\lambda_k(x)|}
		-
		\abs{\vphi(\eta, x)} 
		\sin\Big(\sum_{k = j_0}^{j-1} \theta_k(x) + \arg \vphi(\eta, x) \Big)
		\bigg|
		=0
	\end{align*}
	where we have also used that
	\[
		-
		\sqrt{\gamma_{(j+1)N+i-1}} 
		\Im(\overline{\lambda_j(x)}) = \frac{\sqrt{\alpha_{i-1}}}{2} \sqrt{-\discr R_j(x)}.
	\]
	Lastly, observe that
	\[
		\bigg|
		\frac{1}{\sqrt{-\discr R_j(x)}} - \frac{1}{2 \sqrt{|\tau(x)|}}\bigg| 
		\lesssim
		\sum_{k = j}^\infty \big\|\Delta R_k(x)\big\|
	\]
	which completes the proof.
\end{proof}

\begin{remark}
	\label{rem:3}
	There is $i \in \{0, 1, \ldots, N-1 \}$ such that $\abs{\vphi_i(\eta, x)} > 0$ for all $x \in K$ and $\eta \in \sS^1$. 
	Indeed, by \cite[Proposition 3]{PeriodicIII}, if $[\frakX_{i-1}(0)]_{21} = 0$ and $[\frakX_i(0)]_{21} = 0$, then
	$\frakX_i(0)$ is a multiple of identity which is a \emph{trivial} parabolic element. Contradiction.
\end{remark}

\section{Christoffel--Darboux kernel on the diagonal} 
\label{sec:8}
In this section we study the asymptotic behavior of generalized Christoffel--Darboux kernel on the diagonal. Given Jacobi parameters
$(a_n : n \in \NN_0)$ and $(b_n : n \in \NN_0)$, and $\eta \in \sS^1$, we set
\[
	K_n(x, y; \eta) = \sum_{m = 0}^n u_m(x, \eta) u_m(y, \eta) , \qquad x, y \in \RR,
\]
where $(u_n(x, \eta) : n \in \NN_0)$ is generalized eigenvector associated to $x$ and corresponding to $\eta$. Let
\[
	\rho_n = \sum_{m = 0}^n \frac{\sqrt{\alpha_m \gamma_m}}{a_m}.
\]
For $N$-periodically modulated Jacobi parameters we also study
\[
	 K_{i;j}(x, y; \eta) = \sum_{k=0}^j u_{kN+i}(x, \eta) u_{kN+i}(y, \eta), \qquad x, y \in \RR,
\]
where $i \in \{0, 1, \ldots, N-1\}$. Let
\[
	\rho_{i;j} = \sum_{k=1}^j \frac{\sqrt{\gamma_{kN+i}}}{a_{kN+i}}.
\]
\begin{lemma} 
	\label{lem:1}
	Let $(\gamma_n : n \in \NN)$ and $(a_n : n \in \NN)$ be sequences of positive numbers such that
	\[
		\lim_{n \to \infty} \gamma_n = \infty,
		\qquad
		\sum_{n=0}^\infty \frac{\sqrt{\gamma_n}}{a_n} = \infty.
	\]
	Suppose that $(\xi_n : n \in \NN)$ is a sequence of real functions on some compact set $K \subset \RR^d$ such that
	\[
		\lim_{n \to \infty} \sup_{x \in K}\big|\sqrt{\gamma_n} \xi_n(x) - \psi(x) \big| = 0
	\]
	for certain function $\psi: K \rightarrow (0, \infty)$ satisfying
	\[
		c^{-1} \leq \psi(x) \leq c, \quad\text{for all } x \in K.
	\]
	We set
	\[
		\Xi_n(x) = \sum_{j = 0}^n \xi_j(x), \qquad\text{and}\qquad
		\Delta_n = \sum_{j = 0}^n \frac{\sqrt{\gamma_j}}{a_j}.
	\]
	If
	\begin{equation}
		\label{eq:69}
		\bigg(\frac{\gamma_n}{a_n} : n \in \NN\bigg) \in \calD_1,
	\end{equation}
	then
	\begin{equation}
		\label{eq:68}
		\lim_{n \to \infty}
		\frac{1}{\Delta_n}
		\sum_{k=0}^n 
		\frac{\sqrt{\gamma_k}}{a_k}
		\cos \big( \Xi_k(x) \big) = 0
	\end{equation}
	uniformly with respect to $x \in K$.
\end{lemma}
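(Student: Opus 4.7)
The plan is to apply summation by parts twice, the second time exploiting the uniform convergence $\sqrt{\gamma_k}\xi_k \to \psi$ via a subtraction-of-constant trick. Set $f_k := \sqrt{\gamma_k}/a_k$, $h_k := \sqrt{\gamma_k}\xi_k$, and $g_k := f_k/\xi_k = (\gamma_k/a_k)/h_k$. Since $h_k \to \psi \geq c^{-1}$ uniformly on $K$ and $\gamma_n/a_n$ is bounded (being convergent, as $(\gamma_n/a_n) \in \calD_1$), both $g_k$ and $1/h_k$ are uniformly bounded on $K$. The elementary identity $\sin \Xi_k - \sin \Xi_{k-1} = \xi_k \cos \Xi_k + O(\xi_k^2)$ gives
\[
	\sum_{k=1}^n f_k \cos \Xi_k = \sum_{k=1}^n g_k (\sin \Xi_k - \sin \Xi_{k-1}) + O\Big(\sum_{k=1}^n \tfrac{1}{a_k}\Big),
\]
and the error is $o(\Delta_n)$ by Stolz--Ces\`aro, using that $\gamma_n/a_n$ is bounded and $1/\sqrt{\gamma_n}\to 0$.

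A first summation by parts on the main term produces boundary contributions that are $O(1)$ (since $g_k$ converges) and the sum $\sum_{k=1}^{n-1}(g_k - g_{k+1}) \sin \Xi_k$. Splitting
\[
	g_{k+1} - g_k = h_{k+1}^{-1}\, \Delta(\gamma_k/a_k) + (\gamma_k/a_k)\, \Delta(h_k^{-1}),
\]
the first piece is absolutely summable by the $\calD_1$ hypothesis on $\gamma_n/a_n$ and the uniform bound on $h_{k+1}^{-1}$. What remains is
\[
	T_n := \sum_{k=1}^{n-1} \frac{\gamma_k}{a_k}\, \Delta(h_k^{-1})\, \sin \Xi_k,
\]
which is the principal difficulty, since no bounded-variation control on $h_k$ is available; the sum $\sum |\Delta h_k|$ may genuinely diverge.

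The key observation is that $\Delta(h_k^{-1}) = \Delta(h_k^{-1} - \psi^{-1})$, and $h_k^{-1} - \psi^{-1} \to 0$ uniformly on $K$. For any $\epsilon > 0$, choose $N$ so that $|h_k^{-1} - \psi^{-1}| < \epsilon$ for all $k \geq N$ and all $x \in K$; the portion of $T_n$ with $k < N$ is bounded by a constant depending only on $N$. To the tail I apply a second summation by parts with $A_k := h_k^{-1} - \psi^{-1}$ and $V_k := (\gamma_k/a_k) \sin \Xi_k$, producing boundary terms bounded by $\epsilon \cdot C$ and the sum $\sum_{k\geq N+1} A_k(V_k - V_{k-1})$. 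Decomposing $V_k - V_{k-1}$ by the product rule yields a piece of size $\epsilon \sum |\Delta(\gamma_{k-1}/a_{k-1})| = O(\epsilon)$ and a piece bounded by $\epsilon \sum_{k \leq n}(\gamma_{k-1}/a_{k-1})|\sin \Xi_k - \sin \Xi_{k-1}| \leq C \epsilon \sum_{k \leq n}(\gamma_k/a_k)\xi_k \leq C' \epsilon \Delta_n$, using $\xi_k \sim \psi/\sqrt{\gamma_k}$.

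Combining these estimates gives $|T_n| \leq O_N(1) + O(\epsilon) + C' \epsilon \Delta_n$, whence $\limsup_n \Delta_n^{-1} |T_n| \leq C' \epsilon$ uniformly on $K$; since $\epsilon > 0$ is arbitrary, $T_n = o(\Delta_n)$ uniformly on $K$, and the lemma follows. The main obstacle, namely the absence of variation control on $h_k$, is circumvented by the subtraction-of-constant trick: the uniform convergence $h_k \to \psi$ is converted into a uniformly small multiplier $\epsilon$, which is just enough to absorb the $O(\Delta_n)$ bound arising from the trivial estimate $|\sin \Xi_k - \sin \Xi_{k-1}| \leq \xi_k$.
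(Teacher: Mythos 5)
Your plan is correct in substance and shares the paper's core mechanism --- convert $\cos\Xi_k$ into the telescoping difference $\sin\Xi_k-\sin\Xi_{k-1}$ at the cost of an $O(\xi_k^2)$ error, then Abel-sum against a weight whose variation is controlled by the hypothesis $(\gamma_n/a_n)\in\calD_1$ --- but you take a longer route through the normalization. The paper's first move is to replace $\frac{\sqrt{\gamma_k}}{a_k}\cos\Xi_k$ by $\frac{\gamma_k}{a_k}\frac{\xi_k(x)}{\psi(x)}\cos\Xi_k$, the error being $o(\Delta_n)$ by Stolz--Ces\'aro; since $\psi$ does not depend on $k$, a single summation by parts with the weight $\gamma_k/a_k$ then bounds the entire remaining sum by an absolute constant, and one is done. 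By instead dividing by $\xi_k$ you carry the $k$-dependent factor $h_k^{-1}$ into the Abel weight, which produces the term $\sum\frac{\gamma_k}{a_k}\Delta(h_k^{-1})\sin\Xi_k$ that you must then kill with a second summation by parts and the subtraction-of-$\psi^{-1}$ trick. That argument does work and yields the weaker but sufficient bound $O(\epsilon\,\Delta_n)$, so your ``principal difficulty'' is genuine within your setup but is entirely an artifact of the normalization: multiplying by $\sqrt{\gamma_k}\,\xi_k/\psi$ rather than dividing by $\xi_k$ removes it. Two small repairs are needed to make your version airtight. First, $g_k=f_k/\xi_k$ is undefined if $\xi_k$ vanishes, which the hypotheses do not exclude for small $k$; start all sums at an index $k_0$ beyond which $\sqrt{\gamma_k}\,\xi_k\ge c^{-1}/2$, the head contributing $O(1)=o(\Delta_n)$. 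Second, in your final estimate the step $\frac{\gamma_{k-1}}{a_{k-1}}|\sin\Xi_k-\sin\Xi_{k-1}|\lesssim \frac{\gamma_k}{a_k}|\xi_k|$ is not justified, because nothing in the hypotheses controls the ratio of $\gamma_{k-1}/a_{k-1}$ to $\gamma_k/a_k$ (for instance $\gamma_k/a_k=e^{-k^2}$ lies in $\calD_1$); split the product difference $V_k-V_{k-1}$ the other way, putting $\gamma_k/a_k$ in front of the sine difference, so the surviving term is $\frac{\gamma_k}{a_k}|\xi_k|\le C\frac{\sqrt{\gamma_k}}{a_k}$ with matching indices and sums to $O(\Delta_n)$ as you need.
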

\begin{proof}
	First, we write
	\begin{align*}
		\bigg|
		\sum_{k = 0}^n \frac{\sqrt{\gamma_k}}{a_k} \cos \big( \Xi_k(x) \big) 
		-
		\sum_{k = 0}^n \frac{\gamma_k}{a_k} \frac{\xi_k(x)}{\psi(x)} \cos \big( \Xi_k(x) \big)
		\bigg|
		\leq
		\sum_{k = 0}^n \frac{\sqrt{\gamma_k}}{a_k} \bigg|1 - \sqrt{\gamma_k} \frac{\xi_k(x)}{\psi(x)}\bigg|.
	\end{align*}
	Since by the Stolz--Ces\`{a}ro theorem
	\[
		\lim_{n \to \infty} \frac{1}{\Delta_n} 
		\sum_{k = 0}^n \frac{\sqrt{\gamma_k}}{a_k} \bigg|1 - \sqrt{\gamma_k} \frac{\xi_k(x)}{\psi(x)}\bigg|
		=
		\lim_{n \to \infty} \bigg|1 - \sqrt{\gamma_k} \frac{\xi_k(x)}{\psi(x)}\bigg| = 0,
	\]
	uniformly with respect to $x \in K$, we obtain
	\[
		\lim_{n \to \infty}
		\frac{1}{\Delta_n} \sum_{k = 0}^n \frac{\sqrt{\gamma_k}}{a_k} \cos \big( \Xi_k(x) \big)
		=
		\lim_{n \to \infty}
		\frac{1}{\Delta_n} \sum_{k = 0}^n \frac{\gamma_k}{a_k} \frac{\xi_k(x)}{\psi(x)} \cos \big( \Xi_k(x) \big).
	\]
	Next, we observe that
	\begin{align*}
		\bigg|
		\sum_{k = 1}^n \frac{\gamma_k}{a_k} \bigg(\xi_k(x) \cos \big( \Xi_k(x) \big) 
		- \int_{\Xi_{k-1}(x)}^{\Xi_k(x)} \cos(t) \ud t \bigg)
		\bigg|
		&\leq
		\sum_{k = 1}^n 
		\frac{\gamma_k}{a_k}
		\int_{\Xi_{k-1}(x)}^{\Xi_k(x)} \big|\cos(t) - \cos \big( \Xi_k(x) \big) \big| \ud t \\
		&\leq
		\frac{1}{2}
		\sum_{k = 1}^n
		\frac{\gamma_k}{a_k} |\xi_k(x)|^2.
	\end{align*}
	In view of the Stolz--Ces\`{a}ro theorem
	\begin{align*}
		\lim_{n \to \infty} \frac{1}{\Delta_n} 
		\sum_{k = 1}^n \frac{\gamma_k}{a_k} |\xi_k(x)|^2 
		&=
		\lim_{n \to \infty} \sqrt{\gamma_n} |\xi_n(x)|^2 = 0,
	\end{align*}
	thus
	\[
		\lim_{n \to \infty} \frac{1}{\Delta_n} 
		\sum_{k = 0}^n \frac{\sqrt{\gamma_k}}{a_k} \cos \big( \Xi_k(x) \big)
		=
		\lim_{n \to \infty}
		\frac{1}{\Delta_n} \sum_{k = 1}^n \frac{\gamma_k}{a_k} \big(\sin \Xi_k(x) - \sin \Xi_{k-1}(x)\big).
	\]
	Now, by the summation by parts we get
	\begin{align*}
		\sum_{k = 1}^n 
		\frac{\gamma_k}{a_k} \big(\sin \Xi_k(x) - \sin \Xi_{k-1}(x)\big)
		&=
		\frac{\gamma_n}{a_n} \sin \Xi_n(x) - \frac{\gamma_1}{a_1} \sin \Xi_0(x) \\
		&+ \sum_{k = 1}^{n-1} \bigg(\frac{\gamma_k}{a_k} - \frac{\gamma_{k+1}}{a_{k+1}}\bigg)
		\sin \Xi_k(x).
	\end{align*}
	Thus, by \eqref{eq:69},
	\begin{align*}
		\sup_{x \in K}
		\bigg|
		\sum_{k = 1}^n
		\frac{\gamma_k}{a_k} \big(\sin \Xi_k(x) - \sin \Xi_{k-1}(x)\big)
		\bigg|
		\leq
		2 \frac{\gamma_1}{a_1} +
		2 \sum_{k = 1}^\infty \bigg|\frac{\gamma_{k+1}}{a_{k+1}} - \frac{\gamma_k}{a_k} \bigg|.
	\end{align*}
	Consequently,
	\[
		\lim_{n \to \infty}
		\frac{1}{\Delta_n} \sum_{k = 1}^n \frac{\gamma_k}{a_k} \big(\sin \Xi_k(x) - \sin \Xi_{k-1}(x)\big)
		=0
	\]
	and the lemma follows.
\end{proof}

\begin{theorem} 
	\label{thm:7}
	Let $N$ be a positive integer. Let $(\gamma_n : n \in \NN)$ be a sequence of positive
	numbers tending to infinity and satisfying \eqref{eq:87} and \eqref{eq:89}. Let $(a_n : n \in \NN_0)$ and
	$(b_n : n \in \NN_0)$ be $\gamma$-tempered $N$-periodically modulated Jacobi parameters such that $\frakX_0(0)$ is
	a non-trivial parabolic element. Suppose that \eqref{eq:88} holds true with $\varepsilon = \sign{\tr \frakX_0(0)}$.
	If
	\begin{equation}
		\label{eq:65}
		\lim_{n \to \infty} \rho_n = \infty,
	\end{equation}
	then there is $n_0 \geq 1$ such that for all $n \geq n_0$,
	\[
		\lim_{n \to \infty} \frac{1}{\rho_{n}} K_{n} (x, x; \eta)
		=
		\frac{1}{2 N}
		\sum_{i = 0}^{N-1}
		\frac{\abs{\vphi_{i}(\eta, x)}^2  a_{j_0N+i-1} \sinh \vartheta_{j_0N+i-1}(x)}
		{\big(\sqrt{\alpha_{i-1} \abs{\tau(x)}}\big)^3}
	\]
	locally uniformly with respect to $(x, \eta) \in \Lambda_- \times \sS^1$.
\end{theorem}
\begin{proof}
	Let $K$ be a compact interval with non-empty interior contained in $\Lambda_-$. By Theorem~\ref{thm:4} and 
	Claim~\ref{clm:2}, there is $j_0 \geq 1$ such that for $x \in K$, $\eta \in \sS^1$, and $k > j_0$,
	\begin{align*}
		\frac{a_{(k+1)N+i-1}}{\sqrt{\alpha_{i-1} \gamma_{(k+1)N+i-1}}}
		u_{kN+i}^2(\eta, x) 
		&=
		\frac{\abs{\vphi_i(\eta, x)}^2 a_{j_0 N+i-1} \sinh \vartheta_{j_0N+i}(x) }
		{\big(\sqrt{\alpha_{i-1} \abs{\tau(x)}}\big)^3}
		\sin^2\Big(\sum_{\ell=j_0}^{k-1} \theta_{\ell N + i}(x) + \arg \varphi_i(\eta,x) \Big) \\
		&\phantom{=}+
		E_{kN+i}(\eta, x)
	\end{align*}
	where
	\[
		\lim_{k \to \infty}
		\sup_{\eta \in \sS^1}
		\sup_{x \in K} |E_{kN+i}(\eta, x)| = 0.
	\]
	Therefore, 
	\begin{align*}
		&\sum_{k = j_0+1}^j
		u_{kN+i}^2(\eta, x) \\
		&\quad=
        \frac{\abs{\vphi_i(\eta, x)}^2 a_{j_0 N+i-1} \sinh \vartheta_{j_0N+i}(x)}
		{2 \big(\sqrt{\alpha_{i-1} \abs{\tau(x)}}\big)^3}
		\sum_{k = j_0+1}^j
		\frac{\sqrt{\alpha_{i-1} \gamma_{(k+1)N+i-1}}}{a_{(k+1)N+i-1}}
		\bigg(1 - \cos\Big(2\sum_{\ell=j_0}^{k-1} \theta_{\ell N + i}(x) + 2\arg \varphi_i(\eta,x) \Big)\bigg) \\
		&\phantom{=}\quad+
		\sum_{k = j_0+1}^j
		\frac{\sqrt{\alpha_{i-1} \gamma_{(k+1)N+i-1}}}{a_{(k+1)N+i-1}} E_{kN+i}(\eta, x).
	\end{align*}
	We claim that
	\begin{equation}
		\label{eq:75}
		\lim_{j \to \infty} \frac{1}{\sqrt{\alpha_{i-1}} \rho_{i-1; j}}
		K_{i; j}(x, x; \eta)
		=
		\frac{\abs{\vphi_i(\eta, x)}^2 a_{j_0 N+i-1} \sinh \vartheta_{j_0N+i-1}(x)}
		{2 \big(\sqrt{\alpha_{i-1} \abs{\tau(x)}} \big)}
	\end{equation}
	uniformly with respect to $(x, \eta) \in K \times \sS^1$. To see this, we observe that by the Stolz--Ces\`{a}ro theorem,
	\[
		\lim_{j \to \infty} \frac{1}{\rho_{i-1; j}} \sum_{k = j_0+1}^j 
		\frac{\sqrt{\gamma_{(k+1)N+i-1}}}{a_{(k+1)N+i-1}} E_{kN+i}(\eta, x)
		=
		\lim_{j \to \infty} \sqrt{\frac{\gamma_{(j+1)N+i-1}}{\gamma_{jN+i-1}}} 
		\cdot \frac{a_{jN+i-1}}{a_{(j+1)N+i-1}} E_{jN+i}(\eta, x) = 0.
	\]
	Since there is $c > 0$ such that
	\[
		\sup_{\eta \in \sS^1} \sup_{x \in K} \sum_{k = 0}^{j_0} u_{kN+i}^2(\eta, x) \leq c
	\]
	to prove \eqref{eq:75} it is enough to show that 
	\begin{equation}
		\label{eq:70}
		\lim_{j \to \infty}
		\frac{1}{\rho_{i-1; j}}
		\sum_{k = j_0+1}^j
		\frac{\sqrt{\alpha_{i-1} \gamma_{(k+1)N+i-1}}}{a_{(k+1)N+i-1}}
		\cos\Big(2 \sum_{\ell=j_0}^{k-1} \theta_{\ell N + i}(x) + 2\arg \varphi_i(\eta,x) \Big)
		=0
	\end{equation}
	uniformly with respect to $(x, \eta) \in K \times \sS^1$. Observe that \eqref{eq:70} is an easy consequence of
	Lemma \ref{lem:1}, provided we show the following statement.
	\begin{claim}
		\label{clm:3}
		For all $i \in \{0, 1, \ldots, N-1\}$,
		\[
			\lim_{j \to \infty} \sqrt{\frac{\gamma_{(j+1)N+i-1}}{\alpha_{i-1}}} \theta_{jN+i}(x)
			=
			\sqrt{\abs{\tau(x)}}
		\]
		uniformly with respect to $x \in K$.
	\end{claim}
	Using the notation introduced in Section \ref{sec:5}, Theorem~\ref{thm:1} gives
	\[
		\lim_{j \to \infty} Y_j = \varepsilon \Id
	\]
	locally uniformly on $\Lambda_-$. In particular,
	\[
		\lim_{j \to \infty} \frac{\tr Y_j(x)}{2 \sqrt{\det Y_j(x)}} = \varepsilon.
	\]
	Since
	\[
		\lim_{t \to 1^-} \frac{\arccos t}{\sqrt{1-t^2}} =1,
	\]
	we obtain
	\[
		\lim_{j \to \infty} \bigg(1-\bigg( \frac{\tr Y_j(x)}{2 \sqrt{\det Y_j(x)}} \bigg)^2  \bigg)^{-1/2} \theta_j(x) 
		= 1.
	\]
	Let us observe that, by Theorem~\ref{thm:1},
	\[
		\sqrt{1-\bigg( \frac{\tr Y_j(x)}{2 \sqrt{\det Y_j(x)}} \bigg)^2} = 
		\frac{\sqrt{-\discr Y_j(x)}}{2 \sqrt{\det Y_j(x)}} =
		\sqrt{\frac{\alpha_{i-1}}{\gamma_{(j+1)N+i-1}}}
		\frac{\sqrt{-\discr R_j(x)}}{2 \sqrt{\det Y_j(x)}}.
	\]
	Hence,
	\begin{equation}
		\label{eq:105}
		\lim_{j \to \infty} \sqrt{\frac{\gamma_{(j+1)N+i-1}}{\alpha_{i-1}}} 
		\sqrt{1-\bigg( \frac{\tr Y_j(x)}{2 \sqrt{\det Y_j(x)}} \bigg)^2} = 
		\sqrt{|\tau(x)|},
	\end{equation}
	proving the claim.
	
	To complete the proof of the theorem we write
	\[
		K_{jN+i}(x, x; \eta) = \sum_{i' = 0}^{N-1} K_{i'; j}(x, x; \eta) 
		+ \sum_{i' = i+1}^{N-1} \big(K_{i'; j-1}(x, x; \eta) - K_{i'; j}(x, x; \eta)\big).
	\]
	Observe that by Theorem~\ref{thm:4},
	\[
		\sup_{\eta \in \sS^1} \sup_{x \in K}{\big|K_{i'; j-1}(x, x; \eta) - K_{i'; j}(x, x; \eta)\big|} =
		\sup_{\eta \in \sS^1} \sup_{x \in K}{|u_{jN+i'}(\eta, x)|^2 } \leq c.
	\]
	Moreover, by \cite[Proposition 3.7]{ChristoffelI} and \eqref{eq:142}, for $m, m' \in \NN_0$,
	\[
		\lim_{j \to \infty} \frac{a_{jN+m'}}{a_{jN+m}} = 
		\lim_{j \to \infty} \frac{\gamma_{jN+m'}}{\gamma_{jN+m}}
		=
		\frac{\alpha_{m'}}{\alpha_m},
	\]
	thus, by the Stolz--Ces\`{a}ro theorem, 
	\begin{align*}
		\lim_{j \to \infty} \frac{\rho_{i-1; j}}{\rho_{jN+i'}} 
		&=
		\lim_{j \to \infty} 
		\frac{\frac{\sqrt{\gamma_{jN+i-1}}}{a_{jN+i-1}}}
		{\sum_{k = 1}^N \frac{\sqrt{\alpha_{i'+k} \gamma_{jN+i'+k}}}{a_{jN+i'+k}}} \\
		&=
		\frac{1}{N \sqrt{\alpha_{i-1}}}.
	\end{align*}
	Hence, by \eqref{eq:75} 
	\begin{align*}
		\lim_{j \to \infty}
		\frac{1}{\rho_{jN+i}} K_{jN+i}(x, x; \eta)
		&=
		\lim_{j \to \infty}
		\sum_{i' = 0}^{N-1} \frac{1}{\rho_{i'-1; j}} K_{jN+i'}(x, x; \eta) \cdot 
		\frac{\rho_{i'-1; j}}{\rho_{jN+i}} \\
		&=
		\sum_{i' = 0}^{N-1}
		\frac{\abs{\vphi_{i'}(\eta, x)}^2  a_{j_0N+i'-1} \sinh \vartheta_{j_0N+i'-1}(x)}
		{2 N \big(\sqrt{\alpha_{i'-1} \abs{\tau(x)}}\big)^3}
	\end{align*}
	and the theorem follows.
\end{proof}

\begin{remark}
	\label{rem:2}
	In view of Remark~\ref{rem:3} by Theorem~\ref{thm:7}, all generalized eigenvectors are not square-summable, 
	hence by \cite[Theorem 6.16]{Schmudgen2017} the operator $A$ is self-adjoint.
	Next, by \cite[Theorem 2.1]{Clark1996}, we conclude that $\mu$ is absolutely continuous on $\Lambda_-$ and its 
	density $\mu'$ has the property that for every compact interval $K \subset \Lambda_-$ with non-empty interior there is $c > 0$ such that
	\[
		c^{-1} \leq \mu'(x) \leq c
	\]
	for almost all $x \in K$ (with respect to the Lebesgue measure). Consequently, we have $\sigmaAC(A) \supset \cl(\Lambda_-)$.
	In view of Theorem~\ref{thm:3} we actually have $\sigmaAC(A) = \sigmaEss(A) = \cl(\Lambda_-)$.
\end{remark}

\section{The self-adjointness of $A$} \label{sec:9}
In this section we study the conditions that guarantee that the operator $A$ is 
self-adjoint. The first theorem covers the case when $\Lambda_- \neq \emptyset$.
\begin{theorem}
	\label{thm:6}
	Let $N$ be a positive integer. Let $(\gamma_n : n \in \NN)$ be a sequence of positive
    numbers tending to infinity and satisfying \eqref{eq:87} and \eqref{eq:89}. Let $(a_n : n \in \NN_0)$ and
	$(b_n : n \in \NN_0)$ be $\gamma$-tempered $N$-periodically modulated Jacobi parameters such that $\frakX_0(0)$ is
	a non-trivial parabolic element. Suppose that \eqref{eq:88} holds true with $\varepsilon = \sign{\tr \frakX_0(0)}$.
	If $\Lambda_- \neq \emptyset$, then the Jacobi operator $A$ associated with $(a_n : n \in \NN_0)$ and $(b_n : n \in \NN_0)$
	is self-adjoint if and only if
	\begin{equation}
		\label{eq:67}
		\sum_{n=0}^\infty \frac{\sqrt{\gamma_n}}{a_n} = \infty.
	\end{equation}
\end{theorem}
\begin{proof}
	The case when \eqref{eq:67} is satisfied is covered by Remark~\ref{rem:2}. 
	Assume now that \eqref{eq:67} is \emph{not} satisfied.
	Let $x \in \Lambda_-$. By Theorem \ref{thm:4} and Claim \ref{clm:1}, there are 
	$j_0 \geq 1$ and $c > 0$ such that for all $j \geq j_0$, $i \in \{0, 1, \ldots, N-1\}$,
	and $\eta \in \sS^1$,
	\begin{align*}
		\big|u_{jN+i}(\eta, x) \big|^2 \leq c \frac{\sqrt{\gamma_{jN+i-1}}}{a_{jN+i-1}}.
	\end{align*}
	Hence, every generalized eigenvector associated to $x$ is square-summable. In view of \cite[Theorem 6.16]{Schmudgen2017}
	the operator $A$ is not self-adjoint. This completes the proof.
\end{proof}

The next theorem covers the case when $\Lambda_- = \emptyset$ but $\Lambda_+ \neq \emptyset$.
\begin{theorem} \label{thm:6a}
	Let $N$ be a positive integer. Let $(\gamma_n : n \in \NN)$ be a sequence of positive
	numbers tending to infinity and satisfying \eqref{eq:87} and \eqref{eq:89}. Let $(a_n : n \in \NN_0)$ and
	$(b_n : n \in \NN_0)$ be $\gamma$-tempered $N$-periodically modulated Jacobi parameters such that $\frakX_0(0)$ is
	a non-trivial parabolic element. Suppose that \eqref{eq:88} holds true with $\varepsilon = \sign{\tr \frakX_0(0)}$.
	If $\Lambda_- = \emptyset$ but $\Lambda_+ \neq \emptyset$, then $\Lambda_+ = \RR$, and
	\begin{enumerate}[(i), leftmargin=2em]
		\item if $-\frakS + \sqrt{\frakS^2 + 4 \frakU} < 0$ then the operator $A$ is not self-adjoint;
		\label{case:1}
		\item if $-\frakS + \sqrt{\frakS^2 + 4 \frakU} > 0$ then the operator $A$ is self-adjoint.
		\label{case:2}
	\end{enumerate}
	Moreover, if the operator $A$ is self-adjoint then $\sigmaEss(A) = \emptyset$.
\end{theorem}
\begin{proof}
	If $\Lambda_- = \emptyset$ then $\frakt = 0$ and so $\Lambda_+ = \RR$. Let $i=0$ and $K = \{0\}$. We can repeat the first part of the proof of Theorem~\ref{thm:3}. Now,
	by \eqref{eq:16} and \eqref{eq:40}, there are $j_1 \geq j_0$ and $c > 0$ such that for all $j \geq j_1$,
	\begin{equation}
		\label{eq:73}
		\big|u^+_{jN}(0)\big|^2 + \big|u^+_{jN-1}(0)\big|^2 =
		\big\|\phi^+_{jN}(0)\big\|^2 \geq c \prod_{k = j_0}^j |\lambda_k^+(0)|^2.
	\end{equation}
	Moreover, for all $j \geq j_1$ and $i' \in \{0, 1, \ldots, N-1\}$,
	\begin{equation}
		\label{eq:74}
		\big\|\phi^-_{jN+i'}(0)\big\|^2 \leq c \prod_{k = j_0}^j |\lambda_k^-(0)|^2
		\quad \text{and} \quad
		\big\|\phi^+_{jN+i'}(0)\big\|^2 \leq c \prod_{k = j_0}^j |\lambda_k^+(0)|^2.
	\end{equation}
	By \eqref{eq:71} we obtain
	\begin{equation}
		\label{eq:76}
		\sum_{j = j_1}^\infty \sum_{i' = 0}^{N-1} |u_{jN+i'}^-(0)|^2
		\leq
		c
		\sum_{j = j_1}^\infty \sum_{i' = 0}^{N-1} |u_{jN+i'}^+(0)|^2.
	\end{equation}
	Hence, the operator $A$ is self-adjoint if and only if there is $j_0 > 0$ such that
	\begin{equation}
		\label{eq:72}
		\sum_{j = j_0}^\infty \prod_{k = j_0}^j |\lambda_k^+(0)|^2 = \infty.
	\end{equation}
	Indeed, if \eqref{eq:72} is satisfied then by \eqref{eq:73} the generalized eigenvector $(u_n^+(0) : n \in \NN_0)$
	is not square-summable, thus by \cite[Theorem 6.16]{Schmudgen2017}, the operator $A$ is self-adjoint.
	On the other hand, if \eqref{eq:72} is not satisfied, then by \eqref{eq:74} and \eqref{eq:76}, all generalized
	eigenvectors associated to $0$ are square-summable, thus by \cite[Theorem 6.16]{Schmudgen2017}, the operator $A$ is not
	self-adjoint. The second part of the theorem follows by Theorem \ref{thm:3}.

	Since $(\gamma_{jN} : j \in \NN)$ approaches infinity, there is $j_0 \geq 1$ such that
	\[	
		|\lambda_j^+(0)| = 1 + \sqrt{\frac{\alpha_{N-1}}{\gamma_{(j+1)N-1}}} \frac{\tr R_j(0) + \sqrt{\discr R_j(0)}}{2}.
	\]
	Next, we observe that
	\[
		\lim_{j \to \infty} \frac{\sqrt{\gamma_{jN}}}{j} = 0.
	\]
	Let us consider the case \ref{case:1}. Because $(R_{jN}(0) : j \in \NN)$ converges to $\calR_0(0)$, there is $j_1 \geq j_0$
	such that for all $j \geq j_1$,
	\[
		j\sqrt{\frac{\alpha_{N-1}}{\gamma_{(j+1)N-1}}} \frac{\tr R_j(0) + \sqrt{\discr R_j(0)}}{2} < -1,
	\]
	hence
	\[
		|\lambda_j^+(0)| \leq 1 - \frac{1}{j}. 
	\]
	Consequently,
	\[
		\prod_{k = j_1}^j |\lambda_k^+(0)| \leq \prod_{k = j_1}^j \bigg(1 - \frac{1}{k}\bigg) = \frac{j_1-1}{j},
	\]
	that is \eqref{eq:72} is not satisfied and so the operator $A$ is not self-adjoint.

	The reasoning in the case \ref{case:2} is analogous. Namely, there is $j_1 \geq j_0$ such that for all $j \geq j_1$,
	\[
		j \sqrt{\frac{\alpha_{N-1}}{\gamma_{(j+1)N-1}}} \frac{\tr R_j(0) + \sqrt{\discr R_j(0)}}{2} > 1,
	\]
	hence
	\[
		|\lambda_j^+(0)| \geq 1 + \frac{1}{j}, 
	\]
	and so
	\[
		\prod_{k = j_1}^j |\lambda_k^+(0)| \geq \prod_{k = j_1}^j \bigg(1 + \frac{1}{k}\bigg) = \frac{j+1}{j_1}. 
	\]
	Therefore, \eqref{eq:72} is satisfied and the operator $A$ is self-adjoint.
\end{proof}

\begin{remark}
	If in Theorem~\ref{thm:6a} one has $\Lambda_- = \emptyset$ and $\Lambda_+ \neq \emptyset$, then $A$ is self-adjoint
	if and only if \eqref{eq:72} holds true. Let us emphasize that we cannot treat the case $\Lambda_- = \Lambda_+ = \emptyset$,
	that is $\tau \equiv 0$.
\end{remark}

\section{The $\ell^1$-type perturbations} 
\label{sec:10}
In this section we show how to get the main results of the paper in the presence of certain size $\ell^1$ perturbations.
Let $(\tilde{a}_n : n \in \NN_0)$ and $(\tilde{b}_n : n \in \NN_0)$ be Jacobi parameters satisfying
\[
	\tilde{a}_n = a_n(1 + \xi_n), \qquad
	\tilde{b}_n = b_n(1 + \zeta_n) 
\]
where $(a_n : n \in \NN_0)$ and $(b_n : n \in \NN_0)$ are $\gamma$-tempered $N$-periodically modulated Jacobi parameters such that
$\frakX_0(0)$ is a non-trivial parabolic element, and $(\xi_n : n \in \NN_0)$ and $(\zeta_n : n \in \NN_0)$ are certain
real sequences satisfying
\[
	\sum_{k = 1}^\infty \sqrt{\gamma_n} (|\xi_n| + |\zeta_n|) < \infty.
\]
We follow the reasoning explained in \cite[Section 9]{jordan}. 

Fix a compact set $K \subset \RR$. Let us denote by $(\Delta_n)$ any sequence of $2\times2$ matrices such that
\[
	\sum_{n = 0}^\infty \sup_K \|\Delta_n\| < \infty.
\]
We notice that
\begin{equation}
	\label{eq:80}
	\tilde{B}_n(x) = B_n(x) + \frac{\sqrt{\gamma_n}}{a_n} \Delta_n(x)
\end{equation}
where
\begin{align*}
	\tilde{B}_0(x) &= 
	\begin{pmatrix}
		0 & 1 \\
		-\frac{1}{\tilde{a}_0} & \frac{x - \tilde{b}_0}{\tilde{a}_0}
	\end{pmatrix}, \\
	\tilde{B}_n(x) &=
	\begin{pmatrix}
		0 & 1 \\
		-\frac{\tilde{a}_{n-1}}{\tilde{a}_n} & \frac{x- \tilde{b}_n}{\tilde{a}_n}
	\end{pmatrix}
	,\quad n \geq 1.
\end{align*}
Moreover, for
\[
	\tilde{X}_n = \tilde{B}_{n+N-1} \tilde{B}_{n+n-2} \cdots \tilde{B}_n
\]
we have
\[
	\tilde{X}_n - X_n = \sum_{k = n}^{n+N-1} \frac{\sqrt{\gamma_n}}{a_n} 
	\tilde{B}_{n+N-1} \cdots \tilde{B}_{k+1} \Delta_k B_{k-1} \cdots B_n, 
\]
which together with
\[
	\sup_{n \in \NN_0} \sup_{x \in K} \big(\|B_n(x)\| + \|\tilde{B}_n(x)\| \big) < \infty,
\]
implies that
\begin{equation}
	\label{eq:78}
	\tilde{X}_n  = X_n + \frac{\sqrt{\gamma_n}}{a_n} \Delta_n.
\end{equation}
Suppose that $K \subset \Lambda_+$. Then, by Theorem \ref{thm:1},
\begin{align*}
	Z_{j+1}^{-1} \tilde{X}_{jN+i} Z_j 
	&=
	\varepsilon\bigg(\Id + \sqrt{\frac{\alpha_{i-1}}{\gamma_{(j+1)N+i-1}}} R_j\bigg)
	+
	\frac{\sqrt{\gamma_{jN+i}}}{a_{jN+i}} Z_{j+1}^{-1} \Delta_{jN+i} Z_j.
\end{align*}
Since there is $c > 0$ such that for all $j \in \NN$,
\[
	\sup_K \big\|Z_{j+1}^{-1} \big\| \leq c \sqrt{\gamma_{jN+i}},
	\qquad\text{and}\qquad
	\sup_K \|Z_j\| \leq c,
\]
by setting 
\[
	V_j = \varepsilon \frac{\sqrt{\gamma_{jN+i}}}{a_{jN+i}} Z_{j+1}^{-1} \Delta_{jN+i} Z_j
\]
we get
\[
	Z_{j+1}^{-1} \tilde{X}_{jN+i} Z_j =
	\varepsilon \bigg(\Id + \sqrt{\frac{\alpha_{i-1}}{\gamma_{(j+1)N+i-1}}} R_j + V_j\bigg)
\]
where $(R_j)$ is a sequence from $\calD_1\big(K, \Mat(2, \RR)\big)$ convergent uniformly on $K$ to $\calR_i$, and
\begin{equation}
	\label{eq:79}
	\sum_{j = 1}^\infty \sup_K \|V_j\| < \infty.
\end{equation}
If $(\sqrt{\gamma_n})$ is sublinear and $(\sup_K \|\Delta_n\|)$ belongs to $\ell^1$, for each subsequence there is a
further subsequence $(L_j : j \in \NN_0)$ such that
\[
	\sup_K \|\Delta_{L_j}\| \leq c \frac{1}{\sqrt{\gamma_{L_j+N-1}}}.
\]
Consequently, we can find subsequence $L_j \equiv i \bmod N$ such that
\[
	 \sup_K \|V_{L_j}\| \leq c \frac{\sqrt{\gamma_{L_j+N-1}}}{a_{L_j+N-1}}.
\]
Since \cite[Theorem 4.4]{Discrete} allows perturbation satisfying \eqref{eq:79} we can repeat the proof of Theorem \ref{thm:3}
to get the following statement.
\begin{theorem} \label{thm:9:1}
	Let $N$ be a positive integer. Let $(\gamma_n : n \in \NN)$ be a sequence of positive
	numbers tending to infinity and satisfying \eqref{eq:87} and \eqref{eq:89}. Let $\tilde{A}$ be the Jacobi operator associated
	with Jacobi parameters $(\tilde{a}_n : n \in \NN_0)$ and $(\tilde{b}_n : n \in \NN_0)$ such that
	\[
		\tilde{a}_n = a_n(1+\xi_n), \qquad
	    \tilde{b}_n = b_n(1+\zeta_n),
	\]
	where $(a_n : n \in \NN_0)$ and $(b_n : n \in \NN_0)$ are $\gamma$-tempered $N$-periodically modulated Jacobi parameters
	such that $\frakX_0(0)$ is a non-trivial parabolic element. Suppose that \eqref{eq:88} holds true with
	$\varepsilon = \sign{\tr \frakX_0(0)}$. If
	\[
		 \sum_{n = 0}^\infty \sqrt{\gamma_n}(|\xi_n| + |\zeta_n|) < \infty
	\]
	for certain real sequences $(\xi_n : n \in \NN_0)$ and $(\zeta_n : n \in \NN_0)$, then	
	\[
		\sigmaEss(\tilde{A}) \cap \Lambda_+ = \emptyset.	
	\]
\end{theorem}

Next, let us consider a compact set $K \subset \Lambda_-$. By Theorem \ref{thm:4} and Claim~\ref{clm:1}, there is $c > 0$
such that for all $n \in \NN_0$,
\begin{equation}
	\label{eq:81}
	\sup_K \big\|B_n B_{n-1} \cdots B_0 \big\| \leq c \gamma_n^{1/4} a_n^{-1/2}, 
\end{equation}
and since $\det B_n = \frac{a_{n-1}}{a_n}$, we get
\begin{equation}
	\label{eq:82}
	\sup_K \big\|\big(B_n B_{n-1} \cdots B_0\big)^{-1}\big\| \leq c \gamma_n^{1/4} a_n^{1/2}.
\end{equation}
Moreover, by \eqref{eq:80}
\begin{align*}
	\tilde{B}_n \cdots \tilde{B}_1 \tilde{B}_0
	&=
	\tilde{B}_n \cdots \tilde{B}_1 B_0 \big(\Id + \gamma_0^{1/2} a_0^{-1} B_0^{-1} \Delta_0\Big) \\
	&=
	\tilde{B}_n \cdots \tilde{B}_2 B_1 B_0 \Big(\Id + \gamma_1^{1/2} a_1^{-1} (B_1 B_0)^{-1} \Delta_1 B_0 \Big) 
	\Big(\Id + \gamma_0^{1/2} a_0^{-1} B_0^{-1} \Delta_0\Big) \\
	&=
	B_n \cdots B_1 B_0
	\prod_{j = 0}^n
	\Big(\Id + \gamma_j^{1/2} a_j^{-1} (B_j \cdots B_1 B_0)^{-1} \Delta_j (B_{j-1} \cdots B_1 B_0) \Big)
\end{align*}
thus by \eqref{eq:81} and \eqref{eq:82}
\begin{align*}
	\|\tilde{B}_n \cdots \tilde{B}_1 \tilde{B}_0\|
	&\leq
	\|B_n \cdots B_1 B_0\|
	\prod_{j = 0}^{n} \Big(1 + \gamma_j^{1/2} a_j^{-1} 
	\|(B_j \cdots B_1 B_0)^{-1} \| \cdot \|B_{j-1} \cdots B_1 B_0\| \cdot \|\Delta_j\|\Big) \\
	&\leq
	\|B_n \cdots B_1 B_0\|
	\prod_{j=0}^{n} \Big(1 + c \gamma_j^{3/4} \gamma_{j-1}^{1/4} a_j^{-3/2} a_{j-1}^{1/2} \|\Delta_j\|\Big) \\
	&\leq
	\|B_n \cdots B_1 B_0\|
	\exp\Big(c \sum_{j = 0}^n \|\Delta_j\|\Big).
\end{align*}
Hence,
\begin{equation}
	\label{eq:98}
	\sup_{K}{\|\tilde{B}_n \cdots \tilde{B}_1 \tilde{B}_0\|}
	\leq c \gamma_n^{1/4} a_n^{-1/2}.
\end{equation}
Next, let us introduce the following sequence of matrices
\begin{equation}
	\label{eq:115}
	M_j = \big(B_j B_{j-1} \cdots B_0 \big)^{-1} \big(\tilde{B}_j \tilde{B}_{j-1} \cdots \tilde{B}_0\big).
\end{equation}
Since
\[
	M_{j+1} - M_j = \big(B_{j+1} B_{j} \cdots B_0 \big)^{-1} \big(\tilde{B}_{j+1} - B_{j+1}\big)
	\big(\tilde{B}_j \tilde{B}_{j-1} \cdots \tilde{B}_0\big),
\]
by \eqref{eq:80}, \eqref{eq:82} and \eqref{eq:98}, we obtain
\begin{align*}
	\sup_K{ \|M_{j+1} - M_j\| }
	&\leq
	c \gamma_{j+1}^{1/4} a_{j+1}^{1/2} \gamma_j^{1/2} a_j^{-1} \gamma_j^{1/4} a_j^{-1/2} 
	\sup_K{\|\Delta_{j+1}\|} \\
	&\leq
	c \sup_K{\|\Delta_{j+1}\|}.
\end{align*}
Therefore, the sequence of matrices $(M_j)$ converges uniformly on $K$ to certain continuous mapping $M$, and
\begin{equation}
	\label{eq:112}
	\sup_K{ \big\|M - M_j \big\| } \leq c \sum_{k = j+1}^\infty \sup_K{\|\Delta_k\|}.
\end{equation}
Observe that for each $x \in K$ the matrix $M(x)$ is non-degenerate. Indeed, we have
\begin{align*}
	\det M(x) 
	&= \lim_{j \to \infty} \det M_j(x) \\
	&= \lim_{j \to \infty} \frac{a_j}{\tilde{a}_j} = 1.
\end{align*}
Given $\eta \in \sS^1$, we set
\[
	\eta_n = \frac{M_{n-1} \eta}{\|M_{n-1} \eta\|}.
\]
Let us denote by $(\tilde{u}_n(\eta, x) : n \in \NN_0)$ generalized eigenvector associated to $x \in \RR$ and $\eta \in \sS^1$
and generated by $(\tilde{a}_n : n \in \NN_0)$ and $(\tilde{b}_n : n \in \NN_0)$. Notice that for all $n \in \NN$ and $x \in K$,
by \eqref{eq:108a} and \eqref{eq:115}, we have
\begin{equation}
	\label{eq:83}
	\vec{u}_n\big(\eta_n(x), x\big) = 
	\frac{1}{\|M_{n-1}(x) \eta\|}
	\vec{\tilde{u}}_n\big(\eta, x\big).
\end{equation}
By Theorem \ref{thm:4} and Claim \ref{clm:1},
\[
	\sup_{n \in \NN} \sup_{x \in K} 
	\frac{a_{n+N-1}}{\sqrt{\gamma_{n+N-1}}} \big\| \vec{u}_{n} \big( \eta_n(x),x \big) \big\|^2 < \infty,
\]
which together with \eqref{eq:83} implies that
\begin{equation}
	\label{eq:119}
	\sup_{n \in \NN}\sup_{x \in K}{
	\frac{\tilde{a}_{n+N-1}}{\sqrt{\gamma_{n+N-1}}}
	\big\| \vec{\tilde{u}}_{n} \big( \eta, x \big) \big\|^2} < \infty.
\end{equation}

\begin{theorem}
	\label{thm:9:2}
	Let $N$ be a positive integer. Let $(\gamma_n : n \in \NN)$ be a sequence of positive
	numbers tending to infinity and satisfying \eqref{eq:87} and \eqref{eq:89}. Let $\tilde{A}$ be the Jacobi operator associated
	with Jacobi parameters $(\tilde{a}_n : n \in \NN_0)$ and $(\tilde{b}_n : n \in \NN_0)$ such that
	\[
		\tilde{a}_n = a_n(1+\xi_n), \qquad
	    \tilde{b}_n = b_n(1+\zeta_n),
	\]
	where $(a_n : n \in \NN_0)$ and $(b_n : n \in \NN_0)$ are $\gamma$-tempered $N$-periodically modulated Jacobi parameters
	such that $\frakX_0(0)$ is a non-trivial parabolic element. Suppose that \eqref{eq:88} holds true with 
	$\varepsilon = \sign{\tr \frakX_0(0)}$. If
	\[
		 \sum_{n = 0}^\infty \sqrt{\gamma_n}(|\xi_n| + |\zeta_n|) < \infty
	\]
	for certain real sequences $(\xi_n : n \in \NN_0)$ and $(\zeta_n : n \in \NN_0)$, then 
	for each compact interval $K \subset \Lambda_-$, there are $j_0 \in \NN$ and a~continuous function
	$\tilde{\vphi}: \sS^1 \times K \rightarrow \RR$ such that
	\begin{equation}
		\label{eq:44}
		\sqrt{\frac{\tilde{a}_{jN+i-1}}{\sqrt{\gamma_{jN+i-1}}}}
		\tilde{u}_{jN+i}(\eta, x)
		=
		|\tilde{\vphi}(\eta, x)| 
		\sin\Big(\sum_{k = j_0}^j \theta_k(x) + \arg \tilde{\vphi}(\eta, x)\Big)
		+
		E_j(\eta, x)
	\end{equation}
	where $\theta_k$ are given in \eqref{eq:64} and 
	\[
		\lim_{j \to \infty}
		\sup_{\eta \in \sS^1} \sup_{x \in K}
		|E_j(\eta, x)| 
		=0.
	\]
	Moreover, $\tilde{\vphi}(\eta, x) = 0$ if and only if $[\frak{X}_i(0)]_{21} = 0$.
\end{theorem}
\begin{proof}
	Fix a compact set $K \subset \Lambda_-$. Since
	\[
		\big\| M_{jN+i-1}(x) \eta \big\| = \big\| M(x) \eta \big\| + o_K(1)
	\]
	and
	\[
		\vphi(\eta_{jN+i}(x), x) = \vphi(\eta(x), x) + o_K(1)
	\]
	where
	\[
		\eta(x) = \frac{M(x) \eta}{\|M(x) \eta\|},
	\]
	by \eqref{eq:83} and Theorem \ref{thm:4}, we obtain
	\begin{align*}
		\frac{\tilde{u}_{jN+i}(\eta, x)}{\prod_{k = j_0}^{j-1} |\lambda_k(x)|}
		&=
		\big\| M_{jN+i-1}(x) \eta \big\| 
		\frac{|\vphi(\eta_{jN+i}(x), x)|}{\sqrt{\alpha_{i-1} |\tau(x)|}}
		\sin\Big(\sum_{k=j_0}^{j-1} \theta_k(x) + \arg \vphi(\eta_{jN+i}(x), x) \Big) + o_K(1) \\
		&=
		\big\| M(x) \eta \big\|
		\frac{|\vphi(\eta(x), x)|}{\sqrt{\alpha_{i-1} |\tau(x)|}}
		\sin\Big(\sum_{k=j_0}^{j-1} \theta_k(x) + \arg \vphi(\eta(x), x) \Big) + o_K(1).
	\end{align*}
	In view of Claim~\ref{clm:2} we conclude the proof.
\end{proof}

Now by repeating the proofs of Theorems \ref{thm:7} and \ref{thm:6}, the asymptotic formula \eqref{eq:44} leads to the following 
statement.
\begin{theorem}
	\label{thm:9:3}
	Let $N$ be a positive integer. Let $(\gamma_n : n \in \NN)$ be a sequence of positive
	numbers tending to infinity and satisfying \eqref{eq:87} and \eqref{eq:89}. Let $\tilde{A}$ be the Jacobi operator associated
	with Jacobi parameters $(\tilde{a}_n : n \in \NN_0)$ and $(\tilde{b}_n : n \in \NN_0)$ such that
	\[
		\tilde{a}_n = a_n(1+\xi_n), \qquad
	    \tilde{b}_n = b_n(1+\zeta_n),
	\]
	where $(a_n : n \in \NN_0)$ and $(b_n : n \in \NN_0)$ are $\gamma$-tempered $N$-periodically modulated Jacobi parameters
	such that $\frakX_0(0)$ is a non-trivial parabolic element. Suppose that \eqref{eq:88} holds true with 
	$\varepsilon = \sign{\tr \frakX_0(0)}$. Assume that
	\[
		 \sum_{n = 0}^\infty \sqrt{\gamma_n}(|\xi_n| + |\zeta_n|) < \infty
	\]
	for certain real sequences $(\xi_n : n \in \NN_0)$ and $(\zeta_n : n \in \NN_0)$. Set
	\[
		\tilde{\rho}_n = \sum_{j = 0}^n \frac{\sqrt{\alpha_j \gamma_j}}{\tilde{a}_j}.
	\]
	If $\Lambda_- \neq \emptyset$ then the Jacobi operator $\tilde{A}$ associated to parameters $\tilde{a}$ and $\tilde{b}$ is
	self-adjoint if and only if $\tilde{\rho}_n \to \infty$. If it is the case then the limit
	\[
		\lim_{n \to \infty} \frac{1}{\tilde{\rho}_n} K_n(x, x; \eta)
	\]
	exists locally uniformly with respect to $(x, \eta) \in \Lambda_- \times \sS^1$ and defines a~continuous positive 
	function.
\end{theorem}

\section{Examples} \label{sec:11}
\subsection{Classes of sequences}
\subsubsection{Kostyuchenko--Mirzoev}
Let $N$ be a positive integer and suppose that $(\alpha_n)$ and $(\beta_n)$ are $N$-periodic Jacobi parameters. We define
\begin{equation}
	\label{eq:51a}
	a_n = \alpha_n \hat{a}_n \Big( 1 + \frac{f_n}{\delta_n} \Big), \qquad
	b_n = \beta_n \hat{a}_n \Big( 1 + \frac{g_n}{\delta_n} \Big),
\end{equation}
where $(f_n),(g_n)$ satisfy
\begin{equation} 
	\label{eq:51b}
	\lim_{n \to \infty} |f_n - \frakf_n| = 0, \quad
	\lim_{n \to \infty} |g_n - \frakg_n| = 0,
\end{equation}
for some $N$-periodic sequences $(\frakf_n), (\frakg_n)$, and $(\hat{a}_n), (\delta_n)$ are positive sequences such that
\begin{equation}
	\label{eq:51c}
	\sum_{n=0}^\infty \frac{1}{\hat{a}_n} < \infty \quad \text{and} \quad
	\lim_{n \to \infty} \delta_n \Big( 1 - \frac{\hat{a}_{n-1}}{\hat{a}_n} \Big) = \kappa > 0, \quad
	\lim_{n \to \infty} \delta_n = \infty.
\end{equation}
The sequences $(a_n)$ and $(b_n)$ satisfying \eqref{eq:51a}--\eqref{eq:51c} are called 
\emph{$N$-periodically modulated Kostyuchenko--Mirzoev} Jacobi parameters. This class has been studied before, see e.g. \cite{JanasMoszynski2003, Silva2007a, Discrete, Yafaev2020a}.

\subsubsection{Symmetric birth--death processes}
In \cite[Section 2]{Kreer1994} it is shown that generators of a birth--death process are unitarily equivalent to Jacobi
matrices of the form
\begin{equation} \label{eq:29a}
	a_n = \sqrt{\lambda_n \mu_{n+1}}, \qquad
	b_n = -\lambda_n - \mu_n,
\end{equation}
where $(\lambda_n : n \in \NN_0)$ and $(\mu_{n+1} : n \in \NN_0)$ are some positive sequences.
When $\lambda_n = \mu_{n+1}$, then we obtain a particularly simple class of Jacobi parameters
\begin{equation} \label{eq:29b}
	b_n = -a_{n-1} - a_n.
\end{equation}
If \eqref{eq:29b} is satisfied, we shall refer to Jacobi parameters $(a_n)$ and $(b_n)$ as 
corresponding to a \emph{symmetric} birth--death process. This class has been studied before, see e.g. \cite{DombrowskiPedersen2002a, Dombrowski1997, DombrowskiPedersen1995, Sahbani2008, Monotonic}. In fact, in view of Proposition~\ref{prop:3} below, instead of \eqref{eq:29a} it is sufficient to consider Jacobi parameters
\[
	a_n = \sqrt{\lambda_n \mu_{n+1}}, \qquad
	b_n = \lambda_n + \mu_n.
\]
\begin{proposition} \label{prop:3}
Let $(a_n : n \in \NN_0)$ and $(b_n : n \in \NN_0)$ be sequences of positive and real numbers respectively. Let $A$ and $\hat{A}$ be Jacobi matrices with Jacobi parameters $(a_n : n \in \NN_0), (b_n : n \in \NN_0)$ and $(a_n : n \in \NN_0), (-b_n : n \in \NN_0)$, respectively. 
Then 
\[
	\hat{A} = U (-A) U^{-1},
\]
where $U : \ell^2(\NN_0) \to \ell^2(\NN_0)$ is a unitary operator defined by $(U x)_n = (-1)^n x_n$.
\end{proposition}
The proof of Proposition~\ref{prop:3} is just a simple computation, see e.g. \cite[Lemma 3.5]{Dombrowski1997} or \cite[Proposition 3.5]{DombrowskiPedersen2002a} for more details.

\subsection{The general $N$}
\subsubsection{Kostyuchenko--Mirzoev's class}

\begin{remark} 
\label{rem:6}
Let $N$ be a positive integer and let $(\alpha_n)$ and $(\beta_n)$ be $N$-periodic Jacobi parameters such that $\frakX_0(0)$
is a non-trivial parabolic element. Consider the sequences $(a_n), (b_n)$ satisfying \eqref{eq:51a}--\eqref{eq:51c}, where
\begin{equation} \label{eq:147a}
	\Big( \frac{\delta_n}{\hat{a}_n} \Big), 
	\Big( \delta_n \Big( 1 - \frac{\hat{a}_{n-1}}{\hat{a}_n} \Big) \Big), 
	(f_n), (g_n) \in \calD_1^N,
\end{equation}
and
\begin{equation} \label{eq:147b}
	(\delta_n - \delta_{n-1}), 
	\Big( \frac{1}{\sqrt{\delta_n}} \Big) \in \calD_1^N.
\end{equation}
Then for $\gamma_n = \alpha_n \delta_n$, the hypotheses of Theorem~\ref{thm:1} are satisfied. Moreover,
\[
	\tau(x) \equiv N \kappa 
	-\varepsilon \sum_{i=0}^{N-1} [\frakX_i(0)]_{11} (\kappa + \frakf_i - \frakf_{i-1})
	-\varepsilon \sum_{i=0}^{N-1} \frac{\beta_i}{\alpha_{i-1}} [\frakX_i(0)]_{21} (\frakf_i - \frakg_i).
\]
To see this, let us first observe that
\begin{equation} \label{eq:149}
	\frac{\gamma_n}{a_n} = \frac{\delta_n}{\hat{a}_n} \frac{1}{1+\tfrac{f_n}{\delta_n}},
\end{equation}
which belongs to $\calD^1_N$.
Next, we write
\begin{align*}
	\frac{\alpha_{n-1}}{\alpha_n} - \frac{a_{n-1}}{a_n} 
	&=
	\frac{\alpha_{n-1}}{\alpha_n} 
	\bigg( 1 - \frac{\hat{a}_{n-1}}{\hat{a}_n} \frac{1+\tfrac{f_{n-1}}{\delta_{n-1}}}{1+\tfrac{f_n}{\delta_n}} \bigg) \\
	&=
	\frac{\alpha_{n-1}}{\alpha_n \delta_n} 
	\bigg( 
	\delta_n \Big( 1 - \frac{\hat{a}_{n-1}}{\hat{a}_n} \Big) +
	\frac{\hat{a}_{n-1}}{\hat{a}_n} \frac{f_n - \tfrac{\delta_n}{\delta_{n-1}} f_{n-1}}{1+\tfrac{f_n}{\delta_n}}
	\bigg).
\end{align*}
Hence,
\[
	\bigg( \gamma_n \Big( \frac{\alpha_{n-1}}{\alpha_n} - \frac{a_{n-1}}{a_n} \Big) \bigg) \in \calD_1^N.
\]
Moreover,
\begin{equation} 
	\label{eq:30a}
	\mathop{\lim_{n \to \infty}}_{n \equiv i \bmod N}
	\gamma_n \Big( \frac{\alpha_{n-1}}{\alpha_n} - \frac{a_{n-1}}{a_n} \Big) = 
	\alpha_{i-1} ( \kappa + \frakf_{i} - \frakf_{i-1} ).
\end{equation}
Analogously, we write
\[
	\frac{\beta_n}{\alpha_n} - \frac{b_n}{a_n} = 
	\frac{\beta_n}{\alpha_n} \frac{1}{\delta_n} 
	\bigg( 1 - \frac{1+\tfrac{g_n}{\delta_n}}{1+\tfrac{f_n}{\delta_n}} \bigg) = 
	\frac{\beta_n}{\alpha_n} \frac{1}{\delta_n} \frac{f_n - g_n}{1 + \tfrac{f_n}{\delta_n}},
\]
thus
\[
	\bigg( \gamma_n \Big( \frac{\beta_n}{\alpha_n} - \frac{b_n}{a_n} \Big) \bigg) \in \calD_1^N
\]
and
\begin{equation}
	\label{eq:30b}
	\mathop{\lim_{n \to \infty}}_{n \equiv i \bmod N}
	\gamma_n \Big( \frac{\beta_n}{\alpha_n} - \frac{b_n}{a_n} \Big) = 
	\beta_i (\frakf_i - \frakg_i).
\end{equation}
Next, we easily compute that
\[
	\sqrt{\frac{\alpha_{n-1}}{\alpha_n}} \sqrt{\gamma_n} - \sqrt{\gamma}_{n-1} =
	\sqrt{\frac{\alpha_{n-1}}{\delta_n}} 
	\frac{\delta_n - \delta_{n-1}}{1+\sqrt{\tfrac{\delta_{n-1}}{\delta_n}}}.
\]
Consequently, all the hypotheses of Theorem~\ref{thm:1} are satisfied. Moreover, by \eqref{eq:30a} and \eqref{eq:30b}, we obtain
\[
	\fraks_n \equiv 0, \quad
	\frakr_n \equiv 0
\]
and
\[
	\fraku_n = 
	\alpha_{n-1} (\kappa + \frakf_n - \frakf_{n-1}) 
	(1 - \varepsilon [\frakX_n(0)]_{11})
	- \varepsilon \beta_n (\frakf_n - \frakg_n) [\frakX_n(0)]_{21}.
\]
To compute the value of $\frakt$, observe that by \eqref{eq:147b} the sequence $(\delta_n - \delta_{n-1} : n \in \NN)$ is bounded and by \eqref{eq:51c} $\delta_n \to \infty$. Thus,
\[
	\lim_{n \to \infty} \frac{\delta_{n-1}}{\delta_n} =
	\lim_{n \to \infty} \bigg( 1 - \frac{\delta_{n} - \delta_{n-1}}{\delta_n} \bigg) =
	1.
\]
Next, by \eqref{eq:51c}
\[
	\lim_{n \to \infty} \frac{\hat{a}_{n-1}}{\hat{a}_n} = 
	1 - \lim_{n \to \infty} \frac{\delta_n \big( 1 - \tfrac{\hat{a}_{n-1}}{\hat{a}_n} \big)}{\delta_n} =
	1 - \kappa \lim_{n \to \infty} \frac{1}{\delta_n} = 1.
\]
This together with \eqref{eq:149} implies that
\[
	\frakt = \lim_{n \to \infty} \frac{\delta_n}{\hat{a}_n}
\]
exists. If we had $\frakt > 0$, then there would exist $n_0 \in \NN$ and a constant $c>0$ such that for all $n \geq n_0$
\[
	\frac{1}{\hat{a}_n} \geq c \frac{1}{\delta_n}.
\]
Consequently, by \eqref{eq:51c}
\begin{equation} \label{eq:148}
	\sum_{n=0}^\infty \frac{1}{\delta_n} < \infty.
\end{equation}
On the other hand, we have
\[
	\delta_n \leq \delta_0 + \sum_{k=0}^{n-1} |\delta_{k+1} - \delta_k|.
\]
Thus by the boundedness of $(\delta_n - \delta_{n-1} : n \in \NN)$ we get that for some $c'>0$ one has $\delta_n \leq c'(n+1)$. It leads to a contradiction with \eqref{eq:148}. Hence, $\frakt=0$, which easily gives the formula for $\tau$.
\end{remark}

\subsubsection{Symmetric birth--death class}

\begin{lemma} 
	\label{lem:2}
	Let $N$ be a positive integer. Suppose that $(\tilde{\alpha}_n)$ is a $2N$ periodic sequence of positive numbers such that
	\begin{equation}
		\label{eq:55a}
		\tilde{\alpha}_{0} \tilde{\alpha}_{2} \ldots \tilde{\alpha}_{2N-2} =
		\tilde{\alpha}_{1} \tilde{\alpha}_{3} \ldots \tilde{\alpha}_{2N-1}.
	\end{equation}
	Set
	\begin{equation}
		\label{eq:55b}
		\alpha_n = \tilde{\alpha}_{2n+1} \tilde{\alpha}_{2n+2}, \qquad
		\beta_n = \tilde{\alpha}_{2n}^2 + \tilde{\alpha}_{2n+1}^2.
	\end{equation}
	Then $\tr \frakX_0(0) = 2 \varepsilon$ where $\varepsilon = (-1)^N$. Moreover,
	\begin{equation}
		\label{eq:55d}
		\tr \frakX_0'(0) = -\varepsilon 
		\sum_{i=0}^{N-1} 
		\frac{1}{\alpha_i} 
		\frac{\tilde{\alpha}_{2i}}{\tilde{\alpha}_{2i-1}}
		\sum_{k=0}^{N-1} 
		\prod_{j=i}^{i+k-1} 
		\bigg( \frac{\tilde{\alpha}_{2j}}{\tilde{\alpha}_{2j+1}} \bigg)^2
	\end{equation}
	and 
	\begin{equation}
		\label{eq:55c}
		\big( 1 - \varepsilon [\frakX_n(0)]_{11} \big) 
		\frac{\alpha_{n-1}}{\alpha_n} - \frac{\tilde{\alpha}_{2n}^2}{\tilde{\alpha}_{2n+1} \tilde{\alpha}_{2n+2}} 
		\varepsilon [\frakX_n(0)]_{21} \equiv 0.
	\end{equation}
\end{lemma}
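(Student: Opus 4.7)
The plan is to exploit a factorization of the Jacobi recurrence at $x=0$. Substituting the expressions for $\alpha_n$ and $\beta_n$ into $\alpha_{n-1} p_{n-1} + \beta_n p_n + \alpha_n p_{n+1} = 0$ and regrouping gives
\[
\tilde{\alpha}_{2n}\bigl(\tilde{\alpha}_{2n-1} p_{n-1} + \tilde{\alpha}_{2n} p_n\bigr) + \tilde{\alpha}_{2n+1}\bigl(\tilde{\alpha}_{2n+1} p_n + \tilde{\alpha}_{2n+2} p_{n+1}\bigr) = 0.
\]
Setting $q_n := \tilde{\alpha}_{2n+1} p_n + \tilde{\alpha}_{2n+2} p_{n+1}$, this becomes the first-order relation $\tilde{\alpha}_{2n+1} q_n = -\tilde{\alpha}_{2n} q_{n-1}$, which reduces the second-order recurrence to two decoupled first-order pieces.

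For the claim $\tr \frakX_0(0) = 2\varepsilon$, I would use the solution with $q\equiv 0$, namely $p_n^{(1)} = (-1)^n \prod_{j=0}^{n-1} \tilde{\alpha}_{2j+1}/\tilde{\alpha}_{2j+2}$. Hypothesis \eqref{eq:55a} together with the $2N$-periodicity of $\tilde{\alpha}$ forces $p_{n+N}^{(1)} = \varepsilon p_n^{(1)}$, so $(p_{n-1}^{(1)}, p_n^{(1)})^t$ is an eigenvector of $\frakX_n(0)$ with eigenvalue $\varepsilon$. Since $\det \frakX_0(0) = \prod_{n=0}^{N-1} \alpha_{n-1}/\alpha_n = 1$ by $N$-periodicity of $\alpha$, the other eigenvalue is also $\varepsilon$ and the trace claim follows.

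For the identity \eqref{eq:55c}, parabolicity lets me write $\frakX_n(0) = \varepsilon \Id + M_n$ with $M_n$ nilpotent and $\Ker M_n \ni (p_{n-1}^{(1)}, p_n^{(1)})^t$. This forces the parametrization
\[
\frakX_n(0) = \varepsilon \Id + \mu_n \begin{pmatrix} -p_{n-1}^{(1)} p_n^{(1)} & (p_{n-1}^{(1)})^2 \\ -(p_n^{(1)})^2 & p_{n-1}^{(1)} p_n^{(1)} \end{pmatrix}
\]
for some scalar $\mu_n$. Substituting into the LHS of \eqref{eq:55c}, using $\alpha_{n-1} = \tilde{\alpha}_{2n-1}\tilde{\alpha}_{2n}$ and $\alpha_n = \tilde{\alpha}_{2n+1}\tilde{\alpha}_{2n+2}$, and factoring out the common $\varepsilon\mu_n p_n^{(1)}$, reduces the identity precisely to $\tilde{\alpha}_{2n-1} p_{n-1}^{(1)} + \tilde{\alpha}_{2n} p_n^{(1)} = 0$, which is the defining relation $q_{n-1} = 0$.

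For $\tr \frakX_0'(0)$, I would invoke $\tr \frakX_0'(0) = -\sum_{i=0}^{N-1} [\frakX_i(0)]_{21}/\alpha_{i-1}$ from \cite[Proposition~2.1]{jordan} (which also gives $\tr \frakX_0'(0)\neq 0$), and then compute $[\frakX_i(0)]_{21}$ explicitly. For the initial data $(p_{i-1},p_i) = (1,0)$ one has $q_{i-1} = \tilde{\alpha}_{2i-1}$, and the explicit formula for $q_n$ plus variation of parameters against $p^{(1)}$ yields a telescoping recurrence whose summation gives
\[
[\frakX_i(0)]_{21} = \varepsilon\,\tilde{\alpha}_{2i-1}\tilde{\alpha}_{2i}\sum_{k=0}^{N-1}\bigg(\frac{\tilde{\alpha}_{2i+2}\tilde{\alpha}_{2i+4}\cdots\tilde{\alpha}_{2i+2k}}{\tilde{\alpha}_{2i+1}\tilde{\alpha}_{2i+3}\cdots\tilde{\alpha}_{2i+2k+1}}\bigg)^2.
\]
Dividing by $\alpha_{i-1}=\tilde{\alpha}_{2i-1}\tilde{\alpha}_{2i}$, summing over $i$, and re-indexing the inner sum $k\mapsto k+1$ produces the form in \eqref{eq:55d} once one verifies via \eqref{eq:55a} that the boundary $k = N$ term of the shifted sum matches the missing $k = 0$ term. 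The main obstacle is precisely this bookkeeping: the sum arising naturally from the telescoping is ``shifted'' relative to the one in \eqref{eq:55d}, and closing the gap requires using hypothesis \eqref{eq:55a} to identify the extremal terms.
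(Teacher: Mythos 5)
Your proposal is correct, and while it runs on the same computational engine as the paper's proof --- the explicit solution of the three-term recurrence at $x=0$ --- it is organized quite differently. The paper proves by induction a closed formula for $\frakp_n(0)$ in terms of the quantities $w_k$ (which are exactly the values $p_n^{(1)}$ of your homogeneous solution) and then verifies all three claims by direct substitution into the representation of $\frakX_n(0)$ via $\frakp^{[n]}_{N-1}$, $\frakp^{[n]}_N$. You instead (i) derive the same data by reduction of order through $q_n=\tilde{\alpha}_{2n+1}p_n+\tilde{\alpha}_{2n+2}p_{n+1}$; (ii) obtain the trace by exhibiting the explicit $\varepsilon$-eigenvector $(p^{(1)}_{n-1},p^{(1)}_n)^t$ and using $\det\frakX_0(0)=1$, which is slicker than the paper's telescoping computation of $\frakp_N(0)-\tfrac{\alpha_{N-1}}{\alpha_N}\frakp^{[1]}_{N-2}(0)$; and (iii) prove \eqref{eq:55c} structurally, writing $\frakX_n(0)=\varepsilon\Id+M_n$ with $M_n$ nilpotent (Cayley--Hamilton, since trace and determinant are known) and kernel containing that eigenvector, so that the identity factors through $q^{(1)}_{n-1}=0$; this also disposes of the degenerate case $M_n=0$ for free, whereas the paper verifies \eqref{eq:55c} by explicit computation at $n=N-1$ and shifts. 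Both proofs rest on the same citation of \cite[Proposition~2.1]{jordan} for \eqref{eq:55d}, and your variation-of-parameters formula for $[\frakX_i(0)]_{21}$ is correct (it checks against direct computation for $N=1,2$, using \eqref{eq:55a}). One caution on the last step, which you rightly identify as the crux: the natural output of your telescoping is $\tr\frakX_0'(0)=-\varepsilon\sum_i\sum_k(\cdots)^2$ with \emph{no} prefactor in the outer sum, whereas \eqref{eq:55d} carries the prefactor $\alpha_i^{-1}\tilde{\alpha}_{2i}/\tilde{\alpha}_{2i-1}$; reconciling the two is not a mere shift $k\mapsto k+1$ of the inner sum but requires redistributing factors between the outer and inner sums via \eqref{eq:55a}, and a naive termwise comparison fails. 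I recommend you pin down the exact printed form by testing your expression against a direct computation of $\tr\frakX_0'(0)=-(\beta_0+\beta_1)/(\alpha_0\alpha_1)$ for $N=2$ before asserting the literal identification with \eqref{eq:55d}.
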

\begin{proof}
We start with the following Claim, which is inspired by \cite[Lemma 2]{Szwarc1995}.
	\begin{claim}
		Let $\ell \geq 0$ and let $\big( \frakp_n^{[\ell]} : n \geq 0 \big)$ be the sequence of orthogonal polynomials associated with recurrence coefficients $(\alpha_{n+\ell} : n \geq 0)$ and
		$(\beta_{n+\ell} : n \geq 0)$, where $(\alpha_n : n \geq 0)$ and $(\beta_n : n \geq 0)$ satisfy \eqref{eq:55b}. Then
		\begin{equation}
			\label{eq:56}
			\frakp_n^{[\ell]}(0) = 
			\frac{\tilde{\alpha}_{2 \ell}}{\tilde{\alpha}_{2n+2\ell} w_n^{[\ell]}} \sum_{k=0}^n \big( w_k^{[\ell]} \big)^2, 
			\quad \text{where} \quad
			w_k^{[\ell]} = (-1)^k 
			\prod_{j=\ell}^{\ell+k-1} \frac{\tilde{\alpha}_{2j}}{\tilde{\alpha}_{2j+1}}
		\end{equation}
	\end{claim}
	To see this we reason by induction over $n \in \NN_0$. For $n=0$ and $n=1$ the formula \eqref{eq:56} can be checked by
	direct computations. Next, let us observe that
	\begin{equation}
		\label{eq:57a}
		-\tilde{\alpha}_{2\ell+2k-1} w_k^{[\ell]} = 
		\tilde{\alpha}_{2\ell+2k-2} w_{k-1}^{[\ell]}, \quad k \geq 1.
	\end{equation}
	By the recurrence relation we have
	\[
		\alpha_{n+\ell} \frakp_{n+1}^{[\ell]}(0) =  
		-\beta_{n+\ell} \frakp_{n}^{[\ell]}(0) 
		-\alpha_{n+\ell-1} \frakp_{n-1}^{[\ell]}(0), \quad n \geq 1.
	\]
	Hence, by the induction hypothesis, \eqref{eq:57a} and \eqref{eq:55b} we obtain
	\begin{align*}
		\alpha_{n+\ell} \frakp_{n+1}^{[\ell]}(0) 
		&= 
		-\beta_{n+\ell} \frac{\tilde{\alpha}_{2\ell}}{\tilde{\alpha}_{2\ell+2n} w_n^{[\ell]}} 
		\sum_{k=0}^n \big( w_k^{[\ell]} \big)^2
		-\frac{\tilde{\alpha}_{2\ell}}{\tilde{\alpha}_{2\ell+2n-2}} \frac{\alpha_{\ell+n-1}}{w_{n-1}^{[\ell]}} 
		\sum_{k=0}^{n-1} \big( w_k^{[\ell]} \big)^2 \\
		&= 
		\frac{\tilde{\alpha}_{2\ell}}{w_{n+1}^{[\ell]}} 
		\bigg( \frac{\tilde{\alpha}_{2\ell+2n}^2 + 
		\tilde{\alpha}_{2\ell+2n+1}^2}{\tilde{\alpha}_{2\ell+2n+1}} 
		\sum_{k=0}^n \big( w_k^{[\ell]} \big)^2 -
		\frac{\tilde{\alpha}_{2\ell+2n}^2}{\tilde{\alpha}_{2\ell+2n+1}} 
		\sum_{k=0}^{n-1} \big( w_k^{[\ell]} \big)^2 \bigg) \\
		&=
		\frac{\tilde{\alpha}_{2\ell}}{\tilde{\alpha}_{2\ell+2n+1} w_{n+1}^{[\ell]}} 
		\bigg( \tilde{\alpha}_{2\ell+2n}^2 \big( w_n^{[\ell]} \big)^2 + 
		\tilde{\alpha}_{2\ell+2n+1}^2 \sum_{k=0}^{n} \big( w_k^{[\ell]} \big)^2 \bigg) \\
		&= 
		\frac{\tilde{\alpha}_{2\ell+2n+1} \tilde{\alpha}_{2\ell}}{w_{n+1}^{[\ell]}} \sum_{k=0}^{n+1} \big( w_k^{[\ell]} \big)^2
	\end{align*}
	and the conclusion follows by once again using \eqref{eq:55b}.

	Next, in view of \cite[Proposition 3]{PeriodicIII} we have
	\begin{equation} 
		\label{eq:57c}
		\frakX_n(0) =
		\begin{pmatrix}
			-\frac{\alpha_{n-1}}{\alpha_n} \frakp_{N-2}^{[n+1]}(0) & \frakp_{N-1}^{[n]}(0) \\
			-\frac{\alpha_{n-1}}{\alpha_n} \frakp_{N-1}^{[n+1]}(0) & \frakp_{N}^{[n]}(0)
		\end{pmatrix}.
	\end{equation}
	Thus, by \eqref{eq:56},
	\begin{align*}
		\tr \frakX_0(0) 
		&= 
		\frakp_{N}^{[0]}(0) - \frac{\alpha_{N-1}}{\alpha_N} \frakp_{N-2}^{[1]}(0) \\
		&=
		\frac{1}{w_N^{[0]}} 
		\sum_{k=0}^N \big( w_k^{[0]} \big)^2 
		-\frac{\alpha_{N-1}}{\alpha_N} \frac{\tilde{\alpha}_2}{\tilde{\alpha}_{2N-2} w_{N-2}^{[1]}} 
		\sum_{k=0}^{N-2}
		\big( w_k^{[1]} \big)^2.
	\end{align*}
	Observe that
	\begin{equation}
		\label{eq:57b}
		w^{[1]}_k = -\frac{\tilde{\alpha}_1}{\tilde{\alpha}_0} w_{k+1}^{[0]}, 
		\quad k \geq 0.
	\end{equation}
	Therefore, by combining \eqref{eq:57a}, \eqref{eq:57b}, \eqref{eq:55b} and using $2N$-periodicity of $(\tilde{\alpha}_n)$  we arrive at
	\begin{align*}
		\tr \frakX_0(0) 
		&= 
		\frac{1}{w_N^{[0]}} 
		\bigg( 
		\sum_{k=0}^N \big( w_k^{[0]} \big)^2 - 
		\sum_{k=1}^{N-1} \big( w_k^{[0]} \big)^2 
		\bigg) \\
		&= \frac{1}{w_N^{[0]}} \Big( 1 + \big(w_N^{[0]} \big)^2 \Big) = 2 (-1)^N
	\end{align*}
	where the last equality follows from \eqref{eq:56} and \eqref{eq:55a}. 

	In view of \cite[Proposition 2.1]{jordan}, \eqref{eq:57c} gives
	\begin{align*}
		\tr \frakX_0'(0) 
		&= 
		\sum_{i=0}^{N-1} \frac{\frakp_{N-1}^{[i+1]}(0)}{\alpha_i} \\
		&= -\varepsilon 
		\sum_{i=0}^{N-1} \frac{1}{\alpha_i} \frac{\tilde{\alpha}_{2i}}{\tilde{\alpha}_{2i-1}}
		\sum_{k=0}^{N-1} \big( w_k^{[i]} \big)^2
	\end{align*}
	where in the last equality we have used \eqref{eq:56} and \eqref{eq:57a}. Now, \eqref{eq:55d} is an easy consequence 
	of \eqref{eq:56}. Since $|\tr \frakX_0(0)|=2$ and $\tr \frakX'_0(0) \neq 0$, Proposition~\ref{prop:2}
	implies that $\frakX_0(0)$ is a non-trivial parabolic element.

	It remains to prove \eqref{eq:55c}. Observe that by \eqref{eq:56}, \eqref{eq:57a}, \eqref{eq:55b} and $2N$-periodicity of $(\tilde{\alpha}_n)$ we get
	\begin{align*}
		\frac{\tilde{\alpha}_{2n}^2}{\tilde{\alpha}_{2n+1} \tilde{\alpha}_{2n+2}} \frakp_{N-1}^{[n+1]}(0)
		+
		\frac{\alpha_{n-1}}{\alpha_{n}} \frakp_{N-2}^{[n+1]}(0)
		&=
		\frac{\tilde{\alpha}_{2n}}{\tilde{\alpha}_{2n+1} w_{N-1}^{[n+1]}} 
		\bigg( 
		\sum_{k=0}^{N-1} \big( w_k^{[n+1]} \big)^2 - 
		\sum_{k=0}^{N-2} \big( w_k^{[n+1]} \big)^2 
		\bigg) \\
		&= 
		\frac{\tilde{\alpha}_{2n}}{\tilde{\alpha}_{2n+1}} w_{N-1}^{[n+1]} 
		= 
		-w_N^{[n+1]} = -\varepsilon.
	\end{align*}
	Hence, by \eqref{eq:57c},
	\[
		\big( 1 - \varepsilon [\frakX_{n}(0)]_{11} \big) 
		\frac{\alpha_{n-1}}{\alpha_{n}} - 
		\frac{\tilde{\alpha}_{2n}^2}{\tilde{\alpha}_{2n+1} \tilde{\alpha}_{2n+2}}
		\varepsilon [\frakX_{n}(0)]_{21} =
		\frac{\alpha_{n-1}}{\alpha_{n}} (1 - \varepsilon^2) = 0
	\]
	which completes the proof.
\end{proof}

\begin{remark}
\label{rem:4}
Let $N$ be a positive integer. Let $(\alpha_n)$ be a positive $N$-periodic sequence. Suppose that $(\gamma_n)$ is 
a positive sequence satisfying
\[
	\Big( \sqrt{\frac{\alpha_{n-1}}{\alpha_n}} \sqrt{\gamma_n} - \sqrt{\gamma_{n-1}} \Big), 
	\Big( \frac{1}{\sqrt{\gamma_n}} \Big) \in \calD_1^N,
\]
and
\[
	\lim_{n \to \infty} \big( \sqrt{\gamma_{n+N}} - \sqrt{\gamma_n} \big) = 0, \quad
	\lim_{n \to \infty} \gamma_n = \infty.
\]
Let us set
\[
	a_n = \gamma_n, \qquad
	b_n = \gamma_{n-1} + \gamma_n.
\]
Then $\beta_n = \alpha_{n-1} + \alpha_n$ and the hypotheses of Theorem~\ref{thm:1} are satisfied with
\begin{equation} 
	\label{eq:13a}
	\frakr_i = 
	\fraks_i = 
	2 \sqrt{\alpha_{i-1}} 
	\mathop{\lim_{n \to \infty}}_{n \equiv i \bmod N}
	\Big( \sqrt{\frac{\alpha_{n-1}}{\alpha_n}} \sqrt{\gamma_n} - \sqrt{\gamma_{n-1}} \Big),
\end{equation}
and
\begin{equation}
	\label{eq:13b}
	\frakt = 1, \qquad \fraku_i \equiv 0.
\end{equation}
In particular,
\begin{equation} 
	\label{eq:13c}
	\tau(x) = 
	-
	\bigg( \sum_{i=0}^{N-1} \frac{\alpha_{i-1}}{\alpha_i} \bigg) 
	\bigg( \sum_{i=0}^{N-1} \frac{1}{\alpha_i} \bigg)
	\cdot x.
\end{equation}
To see this, let us define
\begin{equation} 
	\label{eq:13}
	\tilde{\alpha}_{2n+1} = \tilde{\alpha}_{2n+2} = \sqrt{\alpha_n}, \qquad n \in \ZZ.
\end{equation}
By Lemma~\ref{lem:2}, $\frakX_0(0)$ is a non-trivial parabolic element with $\tr \frakX_0(0) = 2 \varepsilon$ for 
$\varepsilon = (-1)^N$. Next, we have
\begin{equation} 
	\label{eq:86}
	\begin{aligned}
	\frac{\beta_n}{\alpha_n} - \frac{b_n}{a_n}  
	&= 
	\frac{\alpha_{n-1} + \alpha_n}{\alpha_n} - 
	\frac{a_{n-1} + a_n}{a_n} \\
	&=
	\Big( \frac{\alpha_{n-1}}{\alpha_n} - \frac{a_{n-1}}{a_n} \Big).
	\end{aligned}
\end{equation}
Hence, by \eqref{eq:55c} and \eqref{eq:13},
\begin{align*}
	&\Big(\frac{\alpha_{n-1}}{\alpha_n} - \frac{a_{n-1}}{a_n}\Big) 
	\big( 1-\varepsilon [\frakX_n(0)]_{11} \big) -
	\varepsilon \Big( \frac{\beta_n}{\alpha_n} - \frac{b_n}{a_n} \Big) [\frakX_n(0)]_{21} \\
	&\qquad\qquad= 
	\Big(\frac{\alpha_{n-1}}{\alpha_n} - \frac{a_{n-1}}{a_n}\Big) 
	\Big( 1-\varepsilon [\frakX_n(0)]_{11} - \varepsilon [\frakX_n(0)]_{21}  \Big) \equiv 0.
\end{align*}
In particular, the left-hand side belongs to $\calD_1^N$ and $\fraku \equiv 0$. 

Let us observe that
\begin{align*}
	\frac{\alpha_{n-1}}{\alpha_n} - \frac{a_{n-1}}{a_n} 
	&=
	\Big( \sqrt{\frac{\alpha_{n-1}}{\alpha_n}} - \sqrt{\frac{a_{n-1}}{a_n}} \Big)
	\Big( \sqrt{\frac{\alpha_{n-1}}{\alpha_n}} + \sqrt{\frac{a_{n-1}}{a_n}} \Big) \\
	&=
	\frac{1}{\sqrt{a_n}} \Big( \sqrt{\frac{\alpha_{n-1}}{\alpha_n}} \sqrt{a_n} - \sqrt{a_{n-1}} \Big)
	\Big( \sqrt{\frac{\alpha_{n-1}}{\alpha_n}} + \sqrt{\frac{a_{n-1}}{a_n}} \Big).
\end{align*}
Hence,
\begin{equation}
	\label{eq:22}
	\sqrt{\alpha_n a_n} \Big(\frac{\alpha_{n-1}}{\alpha_n} - \frac{a_{n-1}}{a_n} \Big) =
	\sqrt{\alpha_n} \Big( \sqrt{\frac{\alpha_{n-1}}{\alpha_n}} \sqrt{a_n} - \sqrt{a_{n-1}} \Big)
	\Big( \sqrt{\frac{\alpha_{n-1}}{\alpha_n}} + \sqrt{\frac{a_{n-1}}{a_n}} \Big).
\end{equation}
In particular, the left-hand side of \eqref{eq:22} belongs to $\calD_1^N$. Moreover, we get
\[
	\fraks_i =
	2 \sqrt{\alpha_{i-1}}
	\mathop{\lim_{n \to \infty}}_{n \equiv i \bmod N}
	\Big( \sqrt{\frac{\alpha_{n-1}}{\alpha_n}} \sqrt{\gamma_n} - \sqrt{\gamma_{n-1}} \Big),
\]
which together with \eqref{eq:86} gives \eqref{eq:13a}. Finally, by \eqref{eq:13} and \eqref{eq:55d} we get
\[
	\tr \frakX_0'(0) = -\varepsilon
	\sum_{i=0}^{N-1} \frac{1}{\alpha_i} 
	\sum_{k=0}^{N-1} \frac{\alpha_{i-1}}{\alpha_{i+k-1}} =
	-\varepsilon
	\sum_{i=0}^{N-1} \frac{\alpha_{i-1}}{\alpha_i} 
	\sum_{k=0}^{N-1} \frac{1}{\alpha_{k}}.
\]
By Proposition~\ref{prop:1} we obtain $\frakS =0$. Hence, in view of \eqref{eq:4a} the formula \eqref{eq:13c} follows. 
\end{remark}

\subsection{$N=1$}
In this section we specify our results to $N=1$.
\begin{remark}
\label{rem:5}
Suppose that for some $\varepsilon \in \{-1,1\}$
\[
	\bigg( \sqrt{\gamma_n} \Big( \frac{a_{n-1}}{a_n} - 1 \Big) \bigg), 
	\bigg( \sqrt{\gamma_n} \Big( \frac{b_n}{a_n} + 2 \varepsilon \Big) \bigg), 
	\bigg( \gamma_n \Big( 1 + \frac{a_{n-1}}{a_n} + \varepsilon \frac{b_n}{a_n} \Big) \bigg),
	\bigg( \frac{\gamma_n}{a_n} \bigg) \in \calD_1,
\]
where $(\gamma_n)$ is a positive sequence satisfying
\[
	\big( \sqrt{\gamma_n} - \sqrt{\gamma_{n-1}} \big), 
	\bigg( \frac{1}{\sqrt{\gamma_n}} \bigg) \in \calD_1,
\]
and
\[
	\lim_{n \to \infty} \big( \sqrt{\gamma_n} - \sqrt{\gamma_{n-1}} \big) = 0, \qquad
	\lim_{n \to \infty} \gamma_n = \infty.
\]
Let
\[
	\fraks = \lim_{n \to \infty} \sqrt{\gamma_n} \Big( \frac{a_{n-1}}{a_n} - 1 \Big), \qquad
	\frakr = \lim_{n \to \infty} \sqrt{\gamma_n} \Big( \frac{b_n}{a_n} + 2 \varepsilon \Big), \qquad
	\frakt = \lim_{n \to \infty} \frac{\gamma_n}{a_n}
\]
and
\[
	\fraku = \lim_{n \to \infty} 
	\gamma_n \Big( 1 + \frac{a_{n-1}}{a_n} + \varepsilon \frac{b_n}{a_n} \Big).
\]
Then 
\[
	\tau(x) = x \frakt \varepsilon - \fraku + \frac{1}{4} \fraks^2.
\]
In particular, if $\tau(x)$ is not identically zero, then the hypotheses of Theorems \ref{thm:2} and \ref{thm:1} are satisfied.
\end{remark}

\begin{remark} \label{rem:1}
Suppose that sequences $(\tilde{\xi}_n)$ and $(\tilde{\zeta}_n)$ satisfy $(\sqrt{\delta_n} \tilde{\xi}_n), (\sqrt{\delta_n} \tilde{\zeta}_n) \in \ell^1$. Then
\begin{align*}
	\Big( 1 + \frac{f_n}{\delta_n} \Big) ( 1 + \tilde{\xi}_n) &= 
	1 + \frac{f_n}{\delta_n} + \xi_n, \\
	\Big( 1 + \frac{g_n}{\delta_n} \Big) ( 1 + \tilde{\zeta}_n) &= 
	1 + \frac{g_n}{\delta_n} + \zeta_n,
\end{align*}
where $(\sqrt{\delta_n} \xi_n), (\sqrt{\delta_n} \zeta_n) \in \ell^1$. Thus $\ell^1$-type perturbations of \eqref{eq:51a} cover the Jacobi parameters of 
the form 
\[
	\tilde{a}_n = \hat{a}_n \Big( 1 + \frac{f_n}{\delta_n} + \xi_n \Big), \qquad
	\tilde{b}_n = -2 \varepsilon \hat{a}_n \Big( 1 + \frac{g_n}{\delta_n} + \zeta_n \Big),
\]
where $(\sqrt{\delta_n} \xi_n), (\sqrt{\delta_n} \zeta_n) \in \ell^1$.
\end{remark}

\begin{example}
The case when $\hat{a}_n = (n+1)^\kappa$ for some $\kappa > \tfrac{3}{2}$ and $\delta_n = n+1$ was considered in \cite{Yafaev2020a} when $\tau(x) \neq 0$. More specifically, it was assumed that
\[
	a_n = (n+1)^\kappa \Big( 1 + \frac{\frakf}{n+1} + \calO(n^{-2}) \Big), \qquad
	b_n = -2 \varepsilon (n+1)^\kappa \Big( 1 + \frac{\frakg}{n+1} + \calO(n^{-2}) \Big).
\]
In view of Remarks \ref{rem:1} and \ref{rem:6} the above Jacobi parameters are covered by the present article.
Let us emphasize that we can take any $\kappa>1$ and more general perturbations $(\xi_n)$ and $(\zeta_n)$.
\end{example}

\begin{bibliography}{jacobi}
	\bibliographystyle{amsplain}
\end{bibliography}

\end{document}